\documentclass[11pt,a4paper]{article}
\usepackage[left=2.0cm, top=2.45cm,bottom=2.45cm,right=2.0cm]{geometry}
\usepackage{mathtools,amssymb,amsthm,mathrsfs,calc,graphicx,cleveref,dsfont,tikz,pgfplots,bm,url}
\usepackage[british]{babel}
\usepackage{amsfonts}              
\usepackage[T1]{fontenc}
\usepackage{enumitem}
\numberwithin{equation}{section}
\numberwithin{figure}{section}

\usepackage{float}
\restylefloat{table}

\theoremstyle{plain}
\newtheorem{theorem}{Theorem}[section]
\newtheorem{lemma}[theorem]{Lemma}

\newtheorem{proposition}[theorem]{Proposition}

\theoremstyle{definition}
\newtheorem{definition}[theorem]{Definition}

\theoremstyle{remark}
\newtheorem{remark}[theorem]{Remark}

\newcommand{\dint}{\textup{d}}
\newcommand{\Gam}{\mathop{\mathrm{Gamma}}\nolimits}
\newcommand{\Bet}{\mathop{\mathrm{Beta}}\nolimits}
\newcommand{\apex}{\mathop{\mathrm{apex}}\nolimits}


\def\BB{\mathbb{B}}

\def\EE{\mathbb{E}}

\def\MM{\mathbb{M}}
\def\NN{\mathbb{N}}

\def\PP{\mathbb{P}}

\def\RR{\mathbb{R}}
\def\SS{\mathbb{S}}

\def\VV{\mathbb{V}}

\def\XX{\mathbb{X}}


\def\sfN{{\sf N}}



\def\cA{\mathcal{A}}
\def\cB{\mathcal{B}}
\def\cC{\mathcal{C}}
\def\cD{\mathcal{D}}

\def\cF{\mathcal{F}}

\def\cL{\mathcal{L}}

\def\cN{\mathcal{N}}

\def\cT{\mathcal{T}}

\def\cV{\mathcal{V}}

\def\cX{\mathcal{X}}


\newcommand{\dd}{{\rm d}}
\newcommand{\conv}{\mathop{\mathrm{conv}}\nolimits}

\newcommand{\aff}{\mathop{\mathrm{aff}}\nolimits}
\newcommand{\pow}{\mathop{\mathrm{pow}}\nolimits}

\newcommand{\inter}{\operatorname{int}}

\newcommand{\Vol}{\operatorname{Vol}}

\newcommand{\ii}{{\rm{i}}}
\newcommand{\Res}{\mathop{\mathrm{Res}}\nolimits}

\makeatletter
\let\@fnsymbol\@alph
\makeatother

\setlength{\parindent}{0pt}

\begin{document}

\title{\bfseries The $\beta$-Delaunay tessellation I:\\ Description of the model and geometry of typical cells}

\author{Anna Gusakova\footnotemark[1],\; Zakhar Kabluchko\footnotemark[2],\; and Christoph Th\"ale\footnotemark[3]}

\date{}
\renewcommand{\thefootnote}{\fnsymbol{footnote}}
\footnotetext[1]{M\"unster University, Germany. Email: gusakova@uni-muenster.de}

\footnotetext[2]{M\"unster University, Germany. Email: zakhar.kabluchko@uni-muenster.de}

\footnotetext[3]{Ruhr University Bochum, Germany. Email: christoph.thaele@rub.de}

\maketitle

\begin{abstract}
\noindent In this paper two new classes of stationary random simplicial tessellations, the so-called $\beta$- and $\beta'$-Delaunay tessellations, are introduced. Their construction is based on a space-time paraboloid hull process and generalizes that of the classical Poisson-Delaunay tessellation. The distribution of volume-power weighted typical cells is explicitly identified, establishing thereby a remarkable connection to the classes of $\beta$- and $\beta'$-polytopes. These representations are used to determine principal characteristics of such cells, including volume moments, expected angle sums and cell intensities.\\

\noindent {\bf Keywords}. {Angle sums, beta-Delaunay tessellation, beta'-Delaunay tessellation, beta-polytope, beta'-polytope, Laguerre tessellation, paraboloid convexity, paraboloid hull process, Poisson point process, Poisson-Delaunay tessellation, Poisson-Voronoi tessellation, random polytope, stochastic geometry, typical cell, weighted typical cell, zero cell}\\
{\bf MSC} 52A22, 52B11, 53C65, 60D05, 60G55.
\end{abstract}

\section{Introduction}

Random tessellations in a Euclidean space are among the most central objects studied in stochastic geometry. Their analysis is motivated by their rich inner-mathematical structures, but in equal measure also by the wide range of applications in which they arise. For example, tessellations, and especially triangulations of a space, play a prominent role for finite element methods in numerical analysis, in computer vision, material science, ecology, chemistry, astrophysics, machine learning, network modelling or computational geometry; we refer to the monographs \cite{AurenhammerKlein,BlaszEtAl,ChegDeyEtAl,Edelsbrunner,Haenggi,LoBook,Mo94,OkabeEtAl,PreparataShamos,SW,SKM} as well as the references cited therein for an extensive overview. However, there are only very few mathematically tractable models for which rigorous results are available and which do not require an analysis purely by computer simulations. Among these models are the Poisson-Voronoi tessellations and their duals, the Poisson-Delaunay tessellations. Their construction can be described as follows. Given a stationary Poisson point process $X$ in $\RR^{d-1}$ we define for any point $v\in X$ the Voronoi cell $C(v,X)$ of $v$ as
$$
C(v,X) := \{w\in\RR^{d-1}:\|w-v\|\leq\|w-v'\|\text{ for all }v'\in X\},
$$
that is, $C(v,X)$ contains all points that are closer to $v$ than to any other point from $X$. In applications, $C(v,X)$ might represent the domain of influence of a point $v$, for example the area in a communication network that a base station placed at $v$ may cover. All such Voronoi cells are random convex polytopes and the collection of all Voronoi cells is the Poisson-Voronoi tessellation of $\RR^{d-1}$. To define the dual tessellation we say that the convex hull $\conv(v_1,\ldots,v_d)$ of $d$ distinct points from $X$ is a Delaunay simplex provided that $X$ has no points in the interior of the ball containing $v_1,\ldots,v_d$ on its boundary. The collection of all Delaunay simplices is what is known as the Poisson-Delaunay tessellation. It is easy to observe that $\conv(v_1,\ldots,v_d)$ is a Delaunay simplex if and only if the Voronoi cells of the points $v_1,\ldots,v_d$ meet at a common point which is then the center of the circumscribed sphere of the simplex.

It is the purpose of this series of papers to introduce and to initiate a systematic study of a generalization of Poisson-Delaunay tessellations, the so-called $\beta$-Delaunay tessellations denoted by $\cD_\beta$.  This is a one-parametric family of random tessellations of $\RR^{d-1}$, where the parameter $\beta$ satisfies $-1<\beta<\infty$. We will also introduce the dual tessellations of $\cD_\beta$, which are denoted by $\cV_\beta$ and called the $\beta$-Voronoi tessellations.  The classical Poisson-Delaunay and Poisson-Voronoi tessellations arise as the limiting cases of these tessellations  when $\beta\to -1$.

\begin{figure}[!t]
\centering
\includegraphics[width=0.48\textwidth]{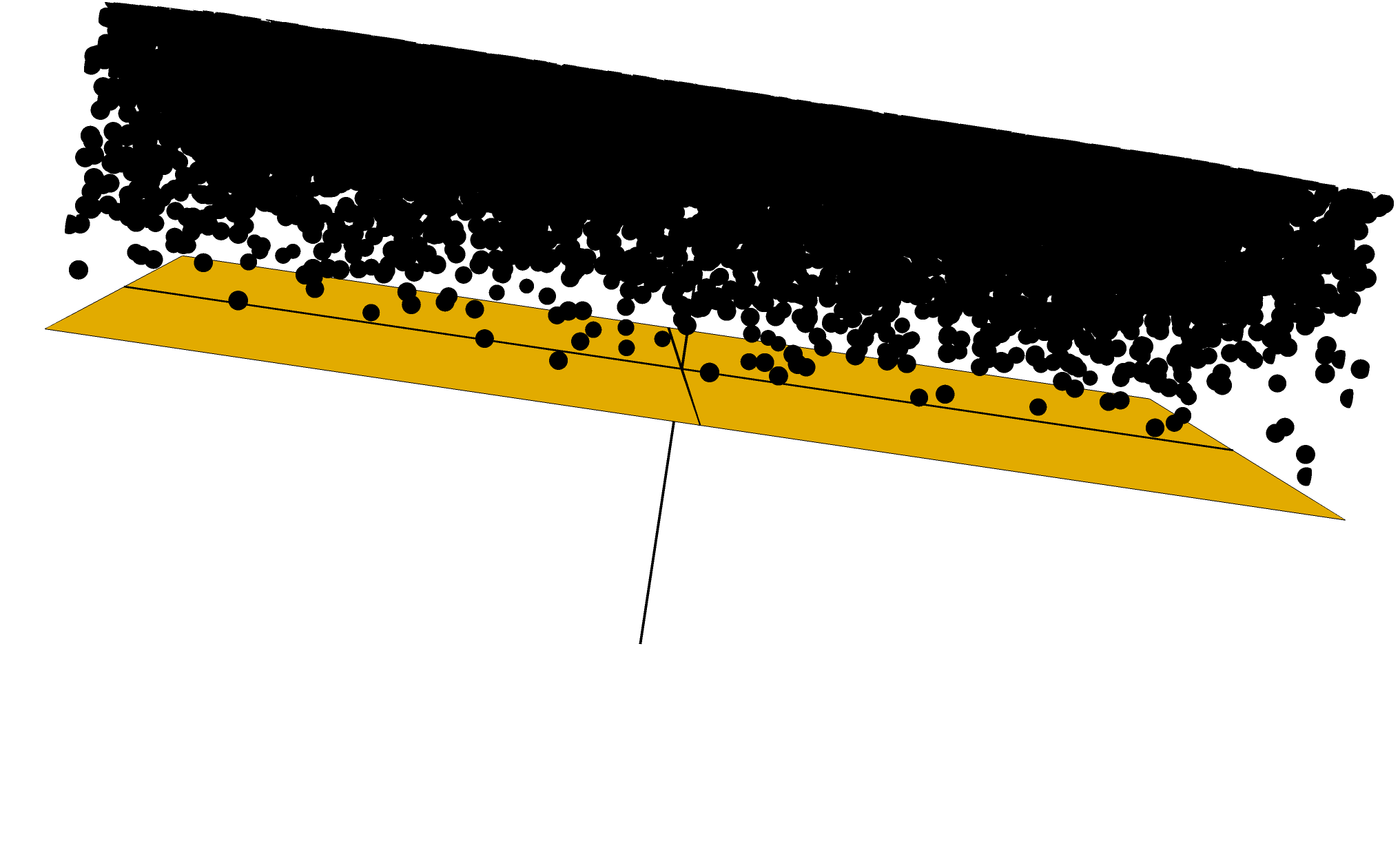}
\includegraphics[width=0.48\textwidth]{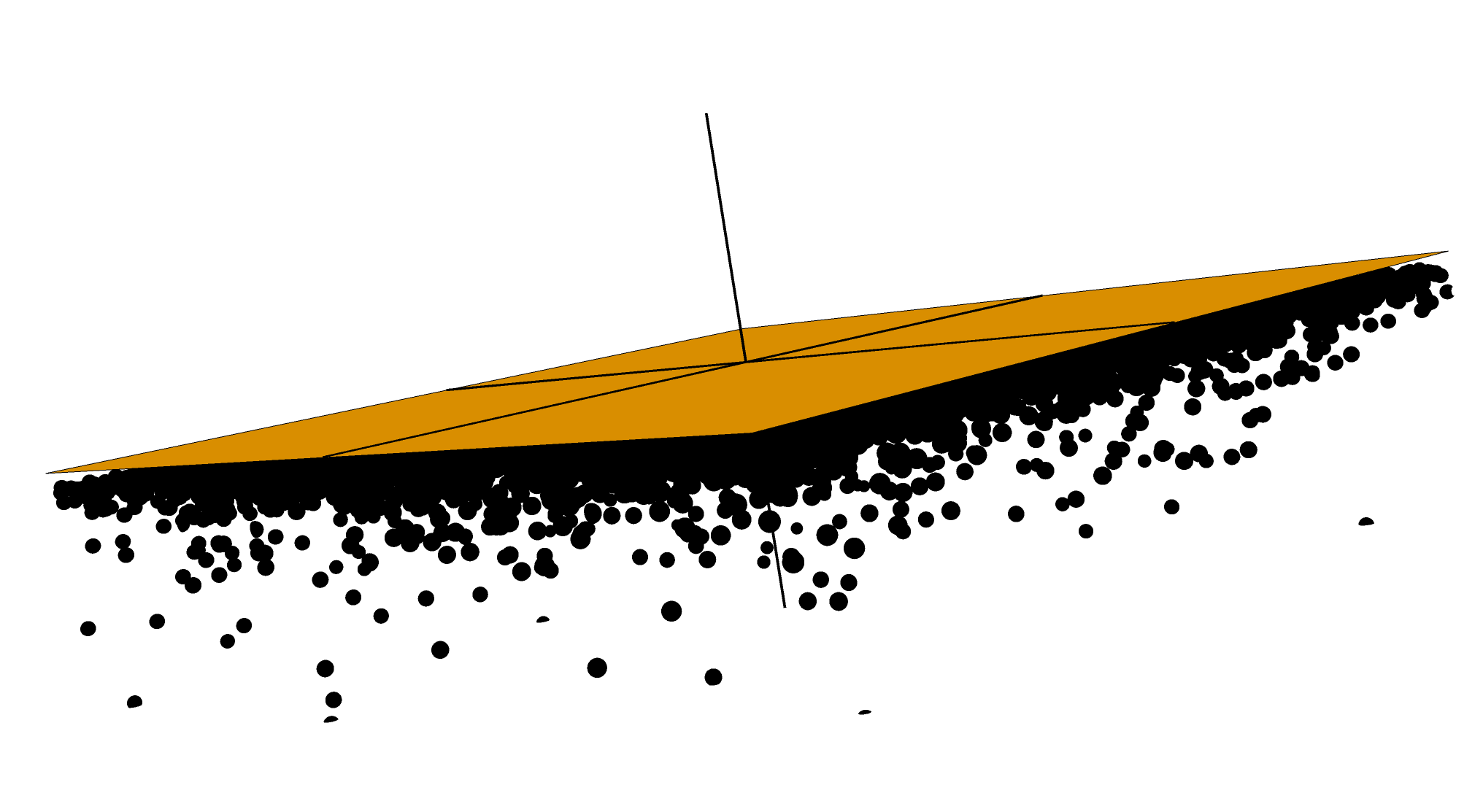}
\caption{Poisson point processes in $\RR^d$ with $d=3$ used to construct the tessellations on the plane. Left: $\eta_\beta$ with $\beta=2$. Right: $\eta_\beta'$ with $\beta=3$. The plane $h=0$ in which the tessellation is constructed is shown in yellow.}
\label{fig:beta_d=3}
\end{figure}

As for the classical Poisson-Delaunay tessellation, the construction of the $\beta$-Delaunay tessellation is based on a Poisson point process as well. However, while the Poisson point process for the Poisson-Delaunay tessellation is located in $\RR^{d-1}$, for the $\beta$-Delaunay tessellation we start with a Poisson point process $\eta_{\beta}$ in the product space $\RR^{d-1}\times [0,\infty)$ whose intensity measure has the form $\text{const}\cdot h^\beta\,\dint v\dint h$, where $v\in\RR^{d-1}$ stands for the spatial coordinate and $h>0$ for the height coordinate of a point $x=(v,h)\in\RR^{d-1}\times [0,\infty)$. A realization of this Poisson point process for $d=3$ is shown in the left panel of Figure~\ref{fig:beta_d=3}. In a next step, we construct the paraboloid hull process associated with $\eta_{\beta}$. This is a particular germ-grain process with paraboloid grains which in stochastic geometry was introduced by Schreiber and Yukich \cite{SY08}, and further developed in Calka, Schreiber and Yukich \cite{CSY13} and Calka and Yukich~\cite{CYGaussian} in order to study the asymptotic geometry of random convex hulls near their boundary. We shall use the same paraboloid hull process to construct a random tessellation $\cD_\beta$ of $\RR^{d-1}$ with only simplicial cells  as follows. Given $d$ points $x_1=(v_1,h_1),\ldots,x_d=(v_d,h_d)$ of $\eta_\beta$ with affinely independent spatial coordinates $v_1,\ldots,v_d$, there is a unique shift of the standard downward paraboloid
$$
\Pi^\downarrow := \left\{(v,h)\in\RR^{d-1}\times\RR\colon h\leq -\|v\|^2\right\}
$$
containing $x_1,\ldots,x_d$ on its boundary. We declare $\conv(v_1,\ldots,v_d)$ to be a $\beta$-Delaunay simplex in $\RR^{d-1}$ if and only if the interior of the downward paraboloid determined by $x_1,\ldots,x_d$ is void of points of $\eta_{\beta}$. The collection of all $\beta$-Delaunay simplices is called the $\beta$-Delaunay tessellation of $\RR^{d-1}$. Two realizations of the paraboloid hull process  in the case $d=2$ are shown in Figure~\ref{fig:beta_d=2}. It should be mentioned at this point that one can alternatively think of $\cD_{\beta}$ as the dual to the Laguerre diagram $\cV_\beta$ of $\eta_{\beta}$, where each point $x=(v,h)\in\eta_{\beta}$ represents a sphere with center $v$ and (imaginary) radius $\sqrt{-h}$; see Section~\ref{sec:lag_diag} for more details. We will call $\cV_\beta$ the $\beta$-Voronoi tessellation. The tessellations $\cD_\beta$ and $\cV_\beta$ are dual to each other in the following sense.  A simplex $\conv(v_1,\ldots,v_d)$ is a cell of the $\beta$-Delaunay tessellation if and only if the cells generated by $x_1,\ldots,x_d$ in the $\beta$-Voronoi tessellation are non-empty and meet at a common point. Both constructions of the $\beta$-Delaunay tessellation are equivalent and each of them plays its role in studying the properties of $\cD_{\beta}$.

{Let us present a simple description of the $\beta$-Voronoi tessellation $\cV_\beta$.}  Imagine that each atom $x=(v,h)\in \RR^{d-1}\times [0,\infty)$ of the Poisson point process $\eta_\beta$ gives rise to a crystallization process in $\RR^{d-1}$ which starts at spatial position $v\in \RR^{d-1}$ at time $h>0$. The speed of the crystallization process is not constant and assumed to be such that the process reaches a point $w\in \RR^d$ at time $h + \|w-v\|^2$. Then, the cell generated by $(v,h)$ is just the set of all points $w\in \RR^{d-1}$ that are reached by the crystallization process started at $(v,h)$ not later than by a crystallization process started at any other point $(v',h')$. It should be emphasized that the cell may be empty and that in the case when it is non-empty, it need not contain the point $v$ (which is different from the case of the classical Poisson-Voronoi tessellation). The set of cells with non-empty interior forms the $\beta$-Voronoi tessellation $\cV_\beta$.

\begin{figure}[!t]
\centering
\includegraphics[width=0.98\textwidth]{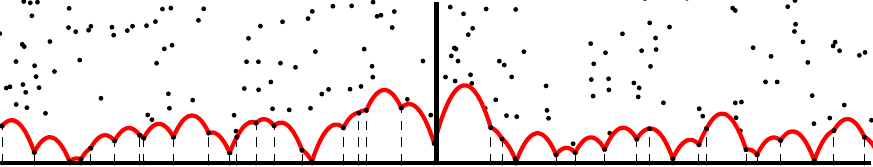}\\
\vspace{0.5cm}
\includegraphics[width=0.98\textwidth]{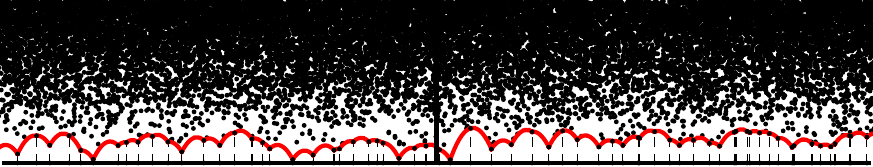}
\caption{Construction of the $\beta$-Delaunay tessellation $\cD_\beta$ for $d=2$. The figure shows the Poisson point process $\eta_{\beta}$ and the corresponding paraboloid hull. Top: $\beta=0$, bottom: $\beta=2$. Points on the horizontal axis are the vertices of the $\beta$-Delaunay tessellation of $\RR$.}
\label{fig:beta_d=2}
\end{figure}

The goal of  the present paper (which is the first in a series of papers) is to show that these constructions do indeed lead to  well-defined stationary random tessellation of $\RR^{d-1}$ and to study their geometric properties. Moreover and in parallel to the construction of the $\beta$-Delaunay tessellation we introduce the concept of a $\beta'$-Delaunay tessellation $\cD_{\beta}'$ (together with its dual $\beta'$-Voronoi tessellation $\cV_\beta'$), whose construction is based on a Poisson point process $\eta_\beta'$ on the product space $\RR^{d-1}\times (-\infty,0)$ with intensity measure $\text{const}\cdot (-h)^{-\beta}\,\dint v\dint h$ for $\beta>(d+1)/2$. A realization of the Poisson point process $\eta_\beta'$ for $d=3$ is shown on the right panel of Figure~\ref{fig:beta_d=3}, while the paraboloid hull process in the case $d=2$ is shown in Figure~\ref{fig:betaprime_d=2}.

Whenever possible, we will develop our results for both, $\beta$ and $\beta'$, random tessellation models in parallel. In particular, we are interested in what is known as the typical cell of $\cD_\beta$ and $\cD_\beta'$. Intuitively, one can think of such a cell as a randomly chosen cell, which is selected independently of its size and shape. More generally, we shall study volume-power weighted typical cells, where the weight is certain  power $\nu$ of the volume. One of our main contributions is {Theorem \ref{theo:typical_cell_stoch_rep}, which gives} a precise distributional characterization of the weighted typical cell of $\cD_\beta$ and $\cD_\beta'$. In particular, we will prove that the weighted typical cell of $\cD_\beta$ and $\cD_\beta'$ is a randomly rescaled volume-power weighted $\beta$- or $\beta'$-simplex, respectively {(see Remark \ref{rem:rep_typical}), which also explains the names of the models}. {Very remarkably, this provides a new link between the $\beta$-and $\beta'$-Delaunay tessellation and the class of $\beta$-polytopes and $\beta'$-polytopes, which was under intensive investigation in recent times, see, for example, \cite{GKT17,kabluchko_formula,KTT,beta_polytopes}, and also Remark \ref{rem:BetaPolytopes} below. It opens a way to study the geometry and the combinatorial properties of the typical cells of $\cD_\beta$ and $\cD_\beta'$.} Among our results are explicit formulas for the moments of the volume as well as probabilistic representations in terms of independent gamma- and beta-distributed random variables. We also compute explicitly the $j$-face intensities and determine the expected angle sums of weighted typical cells.

Finally, we prove that, as $\beta\to\infty$, the expected angle sums of the volume-power weighted typical cells tend to those of a regular simplex in $\RR^{d-1}$. We will pick up this topic in detail in part II of this series of papers, where we describe the common limiting tessellation $\cD:=\cD_{\infty}$ of $\cD_\beta$ and $\cD_\beta'$, as $\beta\to\infty$, after suitable rescaling. This will provide an explanation of the limit behaviour of the expected angle sums just described. In part III we will prove various high-dimensional limit theorems for the volume of weighted typical cells in $\cD_\beta$, $\cD_\beta'$ and $\cD$, that is, limit theorems where $d\to\infty$ (potentially in a coupled way with other parameters). We also describe there the shape of large weighted typical cells in the spirit of Kendall's problem, generalizing thereby results of Hug and Schneider \cite{HugSchneiderDelaunay} on the classical Poisson-Delaunay tessellation. {Finally, in part IV we study mixing properties of the tessellations $\cD_\beta$, $\cD_\beta'$ and $\cD$ and, as a consequence, we derive central limit theorems for the number of $k$-dimensional faces of $\cD_\beta$ and $\cD$ in a growing window.}

\begin{figure}[!t]
	\centering
	\includegraphics[width=0.98\textwidth]{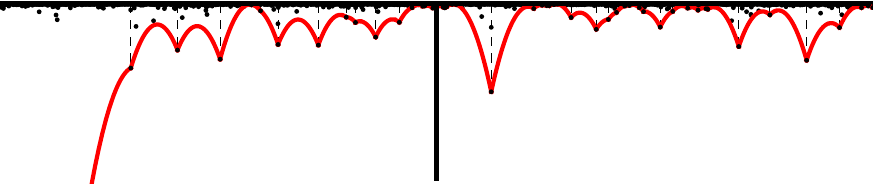}\\
	\vspace{0.5cm}
	\includegraphics[width=0.98\textwidth]{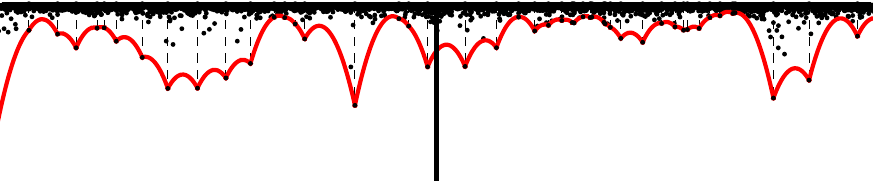}
	\caption{Construction of the $\beta'$-Delaunay tessellation $\cD_\beta'$ for $d=2$. The figure shows the Poisson point process $\eta_{\beta}'$ and the corresponding paraboloid hull. Top: $\beta=2$, bottom: $\beta=3$.}
	\label{fig:betaprime_d=2}
\end{figure}

\medspace

The remaining parts of this text are structured as follows. In Section \ref{sec:Preliminaries} we recall the necessary notions and notation from random tessellation and point process theory, which are used throughout the paper.  The detailed construction of $\beta$-Delaunay tessellations is presented in Section \ref{sec:Construction}. The explicit distributions of their  volume-power weighted typical cells is the content of Section \ref{sec:TypicalCells}. These results are used in Section \ref{sec:Volume} to derive explicit formulas and probabilistic representations for the moments of the volume of such cells. The final Section \ref{sec:AnglesFaceIntensities} discusses expected angle sums of weighted typical cells as well as formulas for face intensities.

\section{Preliminaries}\label{sec:Preliminaries}

\subsection{Frequently used notation}

Let $d\geq 1$ and $A\subset\RR^d$. We denote by ${\rm int}\,A$ the interior of $A$ and by $\partial A$ its boundary. A centred closed Euclidean ball in $\RR^d$ with radius $r>0$ is denoted by $\BB_r^d$ and we put $\BB^d:=\BB_1^d$. The volume of $\BB^d$ is given by
$$
\kappa_d:=\frac{\pi^{d/2}}{\Gamma(1+{d\over 2})}.
$$
By $\sigma_{d-1}$ we denote the spherical Lebesgue measure on $(d-1)$-dimensional unit sphere $\SS^{d-1}=\partial \BB^d$, normalized in such a way that
$$
\omega_d:=\sigma_{d-1}(\SS^{d-1})={2\pi^{d\over 2}\over \Gamma\left({d\over 2}\right)}.
$$

Given a set $C\subset \RR^{d-1}$ denote by $\conv(C)$ its convex hull. For points $v_0,\ldots,v_k\in\RR^{d-1}$ we write $\aff(v_0,\ldots,v_k)$ for the affine hull of $v_0,\ldots,v_k$, which is at most $k$-dimensional affine subspace of $\RR^{d-1}$, $k\in\{0,1,\ldots,d-1\}$.

In what follows we shall represent points $x\in\RR^d$ in the form $x=(v,h)$ with $v\in\RR^{d-1}$ (called \textit{spatial coordinate}) and $h\in\RR$ (called \textit{height}, \textit{weight} or \textit{time coordinate}).

We will often use the following notation. Let $\Pi$ (respectively, $\Pi^+$) be the standard downward  (respectively, upward) paraboloid, defined as
\begin{align*}
\Pi
&:=
\{(v',h')\in\RR^{d-1}\times\RR\colon h'=-\|v'\|^2\},\\
\Pi^+
&:=
\{(v',h')\in\RR^{d-1}\times\RR\colon h'=\|v'\|^2\}.
\end{align*}
Further, let $\Pi_{x}$ be the translation of $\Pi$ by a vector $x:=(v,h)\in\RR^d$, that is,
$$
\Pi_x:=\{(v',h')\in\RR^{d-1}\times\RR\colon h'=-\|v'-v\|^2+h\}.
$$
Moreover, given a set $A\subset \RR^d$ we define the hypograph and the epigraph of $A$ as
\begin{align*}
A^{\downarrow}:&=\{(v,h')\in\RR^{d-1}\times\RR\colon (v,h) \in A \text{ for some } h\ge h'\},\\
A^{\uparrow}:&=\{(v,h')\in\RR^{d-1}\times\RR\colon (v,h) \in A \text{ for some } h\leq h'\}.
\end{align*}
The point $x$ is the apex of the paraboloid $\Pi_x$ and we write
$
\apex\Pi^{\downarrow}_x=\apex\Pi_x:=x.
$

\subsection{(Poisson) point processes}

Let $(\XX,\cX)$ be a measurable space supplied with a $\sigma$-finite measure $\mu$. By $\sfN(\XX)$ we denote the space of $\sigma$-finite counting measures on $\XX$. The $\sigma$-field $\cN(\XX)$ is defined as the smallest $\sigma$-field on $\sfN(\XX)$ such that the evaluation mappings $\xi\mapsto\xi(B)$, $B\in\cX$, $\xi\in\sfN(\XX)$ are measurable. A \textbf{point process} on $\XX$ is a measurable mapping with values in $\sfN(\XX)$ defined over some fixed probability space $(\Omega,\cA,\PP)$. By a \textbf{Poisson point process} $\eta$ on $\XX$ with intensity measure $\mu$ we understand a point process with the following two properties:
\begin{itemize}
	\item[(i)] for any $B\in\cX$ the random variable $\eta(B)$ is Poisson distributed with mean $\mu(B)$;
	\item[(ii)] for any $n\in\NN$ and pairwise disjoint sets $B_1,\ldots,B_n\in\cX$ the random variables $\eta(B_1),\ldots,\eta(B_n)$ are independent.
\end{itemize}
We refer to \cite{LP,SW} for the existence and construction of Poisson point processes and for further details.

\subsection{Tessellations}

In this subsection we recall the concept of a random tessellation and include a brief overview of basic properties. For more detailed discussion we refer reader to \cite[Chapter 10]{SW}. Roughly speaking, a tessellation (or a mosaic) is a system of polytopes that cover the whole space and have disjoint interiors. We fix a space dimension $d\geq 2$. Since the tessellations we construct in Section \ref{sec:Construction} are driven by a Poisson point process in $\RR^d$ and induce a tessellation in $\RR^{d-1}$, we consider this set-up in what follows.

\begin{definition}\label{def:Tessellation}
A \textbf{tessellation} $M$ in $\RR^{d-1}$ is a countable system of subsets of $\RR^{d-1}$ satisfying the following conditions:
\begin{enumerate}[label=\alph*)]
\item $M$ is locally finite system of non-empty closed sets, where local finiteness means that every bounded subset of $\RR^{d-1}$ has non-empty intersection of only finitely many sets from $M$;
\item the sets $m\in M$ are compact, convex and have interior points;
\item the sets of $M$ cover the space, meaning that
$$
\bigcup\limits_{m\in M}m=\RR^{d-1};
$$
\item if $m_1,m_2\in M$ and $m_1\neq m_2$, then $\inter m_1\cap \inter m_2=\varnothing$.
\end{enumerate}
\end{definition}

The elements of $M$ are called {\bf cells} of $M$ and they are convex polytopes by \cite[Lemma~10.1.1]{SW}. Given a polytope $P$ we denote for $k\in\{0,1,\ldots,d-1\}$ by $\cF_k(P)$ the set of its $k$-dimensional faces and let $\cF(P):=\bigcup_{k=0}^{d-1}\cF_{k}(P)$. A tessellation $M$ is called {\bf face-to-face} if for all $P_1,P_2\in M$ we have
$$
P_1\cap P_2\in(\cF(P_1)\cap\cF(P_2))\cup\{\varnothing\}.
$$
A face-to-face tessellation in $\RR^{d-1}$ is called {\bf normal} if each $k$-dimensional face of the tessellation is contained in precisely $d-k$ cells for all $k\in\{0,1,\ldots,d-2\}$. We denote by $\MM$ the set of all face-to-face tessellations in $\RR^{d-1}$. By a {\bf random tessellation} in $\RR^{d-1}$ we understand a particle process $X$ in $\RR^{d-1}$ (in the usual sense of stochastic geometry, see \cite{SW}) satisfying $X\in\MM$ almost surely. Implicitly, we assume here and for the rest of this paper that all the random objects we consider are defined on a probability space $(\Omega,\cA,\PP)$.

\section{Construction of $\beta^{(')}$-Voronoi and $\beta^{(')}$-Delaunay tessellations}\label{sec:Construction}

\subsection{Underlying point processes}

Let us start by defining the $\beta$- or $\beta'$-Voronoi tessellation and its dual, the $\beta$- or $\beta'$-Delaunay tessellation.  We will give two alternative definitions using the concept of Laguerre tessellations and notion of paraboloid hull process introduced by Schreiber and Yukich \cite{SY08}, Calka, Schreiber and Yukich \cite{CSY13} and Calka and Yukich~\cite{CYGaussian}. As an underlying process for the $\beta$-Voronoi and the $\beta$-Delaunay tessellations we consider a space-time Poisson point process $\eta=\eta_{\beta}$ in $\RR^{d-1}\times \RR_{+}$, where $\RR_{+}:=[0,+\infty)$ denotes the set of non-negative real numbers, with intensity measure having density
\begin{equation}\label{eq:BetaPoissonIntensity}
(x,h)\mapsto\gamma\,c_{d,\beta} \cdot h^{\beta},
\qquad
c_{d,\beta}:={\Gamma\left({d\over 2}+\beta+1\right)\over \pi^{d\over 2}\Gamma(\beta+1)},
\qquad
\gamma > 0,
\,
\beta>-1,
\end{equation}
with respect to the Lebesgue measure on $\RR^{d-1}\times \RR_{+}$. Here, $\gamma>0$ is the intensity parameter which  usually will be suppressed in our notation, $\beta>-1$ is the shape parameter, and the normalizing constant $c_{d,\beta}$ has been introduced to simplify some of the computations below.  See Figure~\ref{fig:beta_d=3} (left panel) and Figure~\ref{fig:beta_d=2} for realizations of these point processes with $d=3$ and $d=2$, respectively.

For the $\beta'$-Voronoi and the $\beta'$-Delaunay tessellations we consider a space-time Poisson point process $\eta'=\eta^{\prime}_{\beta}$ in $\RR^{d-1}\times\RR_{-}^*$, where $\RR_{-}^*:=(-\infty,0)$ denotes the set of negative real numbers, with intensity measure having density
\begin{equation}\label{eq:BetaPrimePoissonIntensity}
(x,h)\mapsto\gamma\,c'_{d,\beta} \cdot (-h)^{-\beta},
\qquad
c'_{d,\beta}:={\Gamma\left(\beta\right)\over \pi^{d\over 2}\Gamma(\beta-{d\over 2})},
\qquad
\gamma > 0,
\,
\beta >{d+1\over 2},
\end{equation}
with respect to the Lebesgue measure on $\RR^{d-1}\times\RR_{-}^*$. Again, $\gamma>0$ and $\beta>(d+1)/2$ are the parameters of the process. See Figure~\ref{fig:beta_d=3} (right panel) and Figure~\ref{fig:betaprime_d=2} for realizations of $\eta^{\prime}_{\beta}$ with $d=3$ and $d=2$, respectively.

As we will see later, the $\beta$- and $\beta'$-tessellations can often be treated in a unified way.  To make this explicit and in order to shorten and simplify the presentation of the paper we introduce the following variable notation. We put $\kappa:=1$ if the $\beta$-model is considered and $\kappa:=-1$ if we work with the $\beta'$-model. Moreover, through the paper we will use almost the same notation for $\beta$- and $\beta'$-tessellations which will only differ by using the $\prime$-symbol in case of $\beta'$ model. When the formulas for the $\beta$- and $\beta'$-model are close enough to join them into one expression, we will indicate this by $\!\!\!\!\phantom{x}^{(\prime)}$, meaning that this sign should be omitted as far as a $\beta$-model is considered.

Using the convention just introduced we can consistently represent the density of the Poisson point process $\eta^{(')}$ on $\RR^{d-1}\times\RR$ as
$$
(x,h)\mapsto\gamma\,c_{d,\beta}^{(')} \cdot (\kappa h)^{\kappa\beta} \cdot {\bf 1}\{\kappa h>0\}
$$
with $\beta>-1$ for the $\beta$-model and $\beta>(d+1)/2$ for the $\beta'$-model.

\subsection{General Laguerre diagrams}\label{sec:lag_diag}

Let us start by defining a Laguerre tessellation, which can be considered as a generalized (or weighted) version of a classical Voronoi tessellation. For two points $v,w \in \RR^{d-1}$ and $h\in\RR$ we define the power of $w$ with respect to the pair $(v,h)$ as
\[
\pow (w,(v,h)):=\|w-v\|^2+h.
\]
In this situation $h$ is referred to as the weight of the point $v$. A closely related concept is known from elementary geometry, where for $h < 0$ the value $\pow(w, (v, h))$ describes the square length of the tangent through a point $w$ at a circle with radius $\sqrt{-h}$ around $v$.

Let $X$ be a countable set of marked points of the form $(v,h)$ in $\RR^{d-1}\times \RR$.
Then the \textbf{Laguerre cell} of $(v,h)\in X$ is defined as
\[
C((v,h),X):=\{w\in\RR^{d-1}\colon \pow(w,(v,h))\leq \pow(w,(v',h'))\text{ for all }(v',h')\in X\}.
\]
Let us mention an intuitive interpretation of the notions introduced above in terms of crystallization process. Imagine that $v\in \RR^{d-1}$ denotes a point at which certain crystallization process starts at time $h\in\RR$. Then, $X$ is a collection of centres of crystallization together with the corresponding initial times. Suppose further that after a crystallization process has started at some point $v\in\RR^{d-1}$, it needs time $R^2$ to cover a ball of radius $R>0$ around $v$. In particular, the spreading speed of crystallization is non-constant and decreases with time. Then, $\pow (w,(v,h))$ is just the time at which the point $w$ is covered by the crystallization process that started at $(v,h)$. Moreover, the Laguerre cell of $(v,h)$ is just the crystal with ``center'' $v$, that is the set of points which are covered by the crystallization process that started at  $(v,h)$ before they are covered by any other crystallization process. It should be pointed out that in our model we assume that crystallization starts at point $(v,h)\in X$ even if this point is already covered by another crystallization process, which started earlier.

Note that Laguerre cells can have vanishing topological interior and, in our case, most of them will actually be empty. The collection of all Laguerre cells of $X$, which have non-vanishing topological interior, is called the \textbf{Laguerre diagram}:
\[
\cL(X):=\{C((v,h),X)\colon (v,h)\in X, C((v,h),X)\neq\varnothing\}.
\]
Also, let us emphasize that the Laguerre cell generated by $(v,h)$, even if it is non-empty,  need not contain the point $v$. Indeed, the cell of $(v,h)$ does not contain $v$ if at time $h$ the point $v$  has been already covered by a crystallization process that started at some other point $(v',h')\neq (v,h)$.

Laguerre diagrams play an important role in computational geometry, where it is more common to use the following interpretation of $\cL(X)$ as a vertical projection of a $d$-dimensional polyhedral set to $\RR^{d-1}$, see \cite[Chapters~17,18]{BY98}. Assuming that $h=-r^2$ for some complex number $r\in\mathbb{C}$, to every point $(v,-r^2)\in X$ we assign  a $(d-2)$-dimensional sphere $S(v,r)\subset \RR^{d-1}$  with center $v\in\RR^{d-1}$ and (potentially imaginary) radius $r$. Two spheres $S(v_1,r_1)$ and $S(v_2,r_2)$ are orthogonal if $\|v_1-v_2\|^2=r_1^2+r_2^2$ or, equivalently, $\pow(v_2,(v_1,-r_1^2))=r_2^2$. Thus, for a point $w\in\RR^{d-1}$, the value $\pow(w,(v,-r^2))$ is the squared radius of the sphere centred at $w$ and orthogonal to $S(v,r)$. Consider the transformation $\phi: S(v,r)\mapsto (v,\|v\|^2-r^2)$ that maps $(d-2)$-dimensional spheres to points in $\RR^{d}$. Considering a point $w\in\RR^{d-1}$ as a sphere of zero radius, the above transformation maps $\RR^{d-1}$ to the standard upward paraboloid $\Pi^+$ in $\RR^{d}$. Moreover, each vertical line in $\RR^{d}$ passing through $(v,0)$ represents the set of spheres centred at $v$, and real spheres are mapped to points below $\Pi^+$, while imaginary spheres correspond to points above $\Pi^+$. Treating the paraboloid $\Pi^+$ as a quadric, for each point $x\in\RR^{d}$ we can define the polar hyperplane $x^o$ of $x$ with respect to $\Pi^+$. Then \cite[Lemma~17.2.1]{BY98} implies that the set of spheres orthogonal to a given sphere $S(v,r)$ is mapped by $\phi$ to $\phi(S(v,r))^o$. Moreover, \cite[Lemma~17.2.3]{BY98} shows that $\pow(w,(v,-r^2))$ is the signed vertical distance between the point $\phi(w)$ and the hyperplane $\phi(S(v,r))^o$, i.e.\ the difference of the $d$-th coordinates of $\phi(w)$ and the unique point $y\in\phi(S(v,r))^o$ with $(y_1,\ldots,y_{d-1})=w$. Using this interpretation and writing
$$
P(X):=
\bigcap_{(v,h)\in X}\phi(S(v,\sqrt{-h}))^{o,\uparrow}
$$
it is clear that the Laguerre diagram $\cL(X)$ arises as the vertical projection (setting the $d$-th coordinate to zero) of the boundary of $P(X)$ to $\RR^{d-1}$. Thus, each non-empty Laguerre cell is a vertical projection of a face of $P(X)$. For a more detailed overview of the above interpretation we refer the reader to \cite[Chapters 17,18]{BY98}.

\subsection{Random Laguerre tessellations}\label{sec:Laguerre_tess}

It should be mentioned that a Laguerre diagram is not necessarily a tessellation, at least as long as we do not impose additional assumptions on the geometric properties of the set $X$. We also note that the case when all weights are equal corresponds to the case of the classical Voronoi tessellation. The first formal description of geometric properties {of the set $X$ ensuring that the Laguerre diagram $\cL(X)$ becomes a tessellation} is due to Schlottmann \cite{Sch93} and a thorough investigation of the case when all weights are negative has been made by Lautensack and Zuyev \cite{LZ08, Ldoc}. In our situation we are interested in the case of positive, negative, as well as general weights $h\in\RR$. More precisely and in view of the applications in part II of this series of papers, we consider a point process $\xi$ in $\RR^{d-1}\times E$, where $E\subset \RR$ is a possibly unbounded interval, satisfying the following properties.

\begin{itemize}
\item[(P1)] For every $(w,t)\in\RR^{d-1}\times E$ there are almost surely only finitely many $(v,h)\in\xi$ satisfying
$$
\pow (w,(v,h)) = \|w-v\|^2+h \leq t.
$$
In words, at the time when a crystallization process starts at some $(w,t)$, the point $w$ is already reached by at most finitely many crystallization processes.
\item[(P2)] With probability $1$ we have
$$
\conv(v\colon (v,h)\in\xi)=\RR^{d-1}.
$$

\item[(P3)] With probability $1$ no $d+1$ points $(v_0,h_0),\ldots, (v_d,h_d)$ from $\xi$ lie on the same downward paraboloid of the form
    $$
    \{(v,h)\in \RR^{d-1}\times E:  \|v - w\|^2 + h = t\}
    $$
with  $(w,t)\in\RR^{d-1}\times E$. In words, with probability $1$ it is not possible that $d+1$ crystallization processes reach the same point in space simultaneously.
\end{itemize}

In the following two lemmas we prove that the Laguerre diagram constructed on the Poisson point process $\xi$ is a random face-to-face normal tessellation in $\RR^{d-1}$.

\begin{lemma}
Let $\xi$ be a point process satisfying conditions (P1) and (P2). Then  $\cL(\xi)$ is a random face-to-face tessellation in $\RR^{d-1}$.
\end{lemma}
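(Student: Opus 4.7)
The plan is to verify the four defining properties (a)--(d) of Definition~\ref{def:Tessellation} in turn, and then check the face-to-face condition, working directly from the defining inequalities of the Laguerre cells together with (P1) and (P2).

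The basic algebraic observation is that the condition $\pow(w,(v,h))\leq\pow(w,(v',h'))$ rewrites as the affine half-space inequality
\[
2(v'-v)\cdot w\leq (h-h')+\|v'\|^2-\|v\|^2,
\]
so every cell $C((v,h),\xi)$ is closed and convex as an intersection of closed affine half-spaces in $\RR^{d-1}$. Membership in $\cL(\xi)$ already enforces non-empty interior, and once local finiteness is established each such cell is cut out by only finitely many of these half-spaces on a neighbourhood of any of its points, hence is a polyhedron. To deduce compactness I would rule out any recession direction $u\in\SS^{d-2}$: a ray inside a cell $C((v,h),\xi)$ would force $(v'-v)\cdot u\leq 0$ for every $(v',h')\in\xi$, contradicting the fact that (P2) forbids the spatial projection $\{v:(v,h)\in\xi\}$ from lying in any closed half-space.

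Condition (P1) is tailored to give the covering property (c): for any $w\in\RR^{d-1}$ I pick any $(v_0,h_0)\in\xi$ (which exists by (P2)) and set $t:=\pow(w,(v_0,h_0))$, so that the infimum $\phi(w):=\inf_{(v,h)\in\xi}\pow(w,(v,h))$ is realised as a minimum over the finite set supplied by (P1). To upgrade this to $w$ lying in some cell of $\cL(\xi)$, I would reduce the competition near $w$ to finitely many paraboloids using (P1) and then, by a standard perturbation argument, find a full-dimensional region on which one minimiser wins; its cell is full-dimensional and its closure contains $w$. Property (d) is then automatic, since a point in the interiors of two distinct cells would be a strict minimiser for two distinct generators.

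The main obstacle is the local finiteness required by (a), because (P1) is only pointwise in $w$. For a bounded $B\subset B(w_0,R)$ with reference generator $(v_0,h_0)\in\xi$, any cell meeting $B$ at some $w$ has its generator $(v,h)$ satisfying $\pow(w,(v,h))\leq M:=\sup_{w\in B}\pow(w,(v_0,h_0))<\infty$. Using the identity
\[
\pow(w_0,(v,h))-\pow(w,(v,h))=(w_0-w)\cdot(w_0+w-2v)
\]
together with a finite covering of $\bar B$ by small balls $B(w_i,\varepsilon)$, this bound is transported into a uniform bound $\pow(w_i,(v,h))\leq M'$ for some centre $w_i$, at which point (P1) produces a finite list of candidate generators. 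Finally, the face-to-face condition is comparatively routine: if $w\in C((v_1,h_1),\xi)\cap C((v_2,h_2),\xi)$, then both $(v_1,h_1)$ and $(v_2,h_2)$ attain $\phi(w)$, so the intersection is cut out of each cell by the supporting hyperplane $\pow(w,(v_1,h_1))=\pow(w,(v_2,h_2))$ and is therefore a common face of both polyhedra; the same reasoning extends to intersections of arbitrarily many cells.
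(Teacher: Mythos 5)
Your route is genuinely different from the paper's: the authors dispose of this lemma in one line by invoking Proposition~1 of Schlottmann \cite{Sch93} (modulo a sign convention for $\pow$), whereas you reprove the tessellation properties from scratch. Several of your steps are correct and clean --- in particular the observation that each cell is an intersection of closed half-spaces, the exclusion of recession directions via (P2), and the face-to-face argument (indeed $C_1\cap C_2=C_1\cap H=C_2\cap H$ for the bisecting hyperplane $H$, which is supporting for both cells, so the intersection is an exposed face of each; note the paper's definition only requires this for pairs, so the extension to arbitrary intersections is not even needed).

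The genuine gap is in the step you yourself single out as the main obstacle, local finiteness, and it also infects the covering step. Two problems. First, (P1) is only available for $(w,t)$ with $t\in E$; when $E=(-\infty,0)$ (the $\beta'$-case, which this lemma must cover), the level $t:=\pow(w,(v_0,h_0))$ or $M:=\sup_{w\in B}\pow(w,(v_0,h_0))$ may well be nonnegative, hence outside $E$, and then (P1) says nothing --- in fact for $\eta_\beta'$ the set $\{(v,h):\pow(w,(v,h))\le t\}$ contains infinitely many points whenever $t\ge 0$, since the atoms accumulate at the hyperplane $h=0$. Second, and more seriously, the transport identity $\pow(w_i,(v,h))-\pow(w,(v,h))=(w_i-w)\cdot(w_i+w-2v)$ gives an error of order $\varepsilon\|v\|$, and over the candidate generators $\|v\|$ is \emph{not} bounded when $E$ is unbounded below: a generator with very negative $h$ and very large $\|v\|$ can still satisfy $\pow(w,(v,h))\le M$ at some $w\in B$. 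So no uniform bound $M'$ (let alone one lying in $E$) comes out of covering $\bar B$ by small balls; your argument works only when $E$ is bounded below. The standard repair is to test (P1) not inside $B$ but \emph{outside} it: if infinitely many generators $(v_i,h_i)$ won somewhere on $\bar B(w_0,R)$, one extracts a subsequence with $\|v_i\|\to\infty$ and $v_i/\|v_i\|\to u^*$, and then the affine functions $w\mapsto\pow(w,(v_i,h_i))-\|w\|^2$, being steep with slope $-2v_i$, tend to $-\infty$ at every point $w_0+R'u$ with $R'>R$ and $u$ close to $u^*$; this violates (P1) at some point of a fixed countable set with a level that is eventually in $E$. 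Without an argument of this kind (or the citation to \cite{Sch93}), both local finiteness and the attainment of the infimum defining the cells remain unproved in the $\beta'$ setting.
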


\begin{proof}
The proof follows directly from \cite[Proposition 1]{Sch93}. In this context, it should be noted that in \cite{Sch93} the function
$$
\pow^*(w,(v,h)) = \|w-v\|^2-h =\|w-v\|^2+(-h)=\pow(w,(v,-h))
$$
was considered. However, this does not influence the proof and we prefer to use the function $\pow(\,\cdot\,)$ in this paper, which is more convenient for our purposes.
\end{proof}

\begin{lemma}\label{lem:voronoi_normal}
Let $\xi$ be a point process satisfying conditions (P1)--(P3). Then the random tessellation $\cL(\xi)$ is normal with probability $1$.
\end{lemma}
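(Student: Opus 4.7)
The plan is to fix a $k$-dimensional face $F$ of $\cL(\xi)$ and count exactly how many cells contain it. I would pick any $w\in\operatorname{relint}(F)$ and, by (P1), form the finite set
\[
I(w):=\{(v,h)\in\xi:\pow(w,(v,h))=\min\nolimits_{(v',h')\in\xi}\pow(w,(v',h'))\}
\]
of atoms attaining the minimum power at $w$. Since $\cL(\xi)$ is face-to-face and only records Laguerre cells with non-empty interior, the cells containing $F$ are exactly those non-empty $C((v,h),\xi)$ with $(v,h)\in I(w)$; denote the corresponding sub-collection by $I^{*}(w)$ and put $\ell:=|I^{*}(w)|$. The task reduces to proving $\ell=d-k$.

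For the upper bound $\ell\le d$, I would note that all $(v,h)\in I^{*}(w)$ satisfy $\|v-w\|^{2}+h=t$, where $t$ is the common minimum power at $w$, so they lie on the downward paraboloid with apex $(w,t)$, and (P3) rules out $d+1$ atoms of $\xi$ in such a configuration. For the dimensional identity, I would argue that locally around $w$ the face $F$ is cut out by the $\ell-1$ bisector equations $\pow(\cdot,(v_{i},h_{i}))=\pow(\cdot,(v_{1},h_{1}))$, $i=2,\ldots,\ell$, each of which is affine (the quadratic $\|\cdot\|^{2}$ parts cancel) with normal vector $v_{i}-v_{1}$. Provided the spatial coordinates $v_{1},\ldots,v_{\ell}$ are affinely independent, the $\ell-1$ normals are linearly independent, so $F$ is locally an affine flat of dimension $(d-1)-(\ell-1)=d-\ell$; this yields $k=d-\ell$ and hence $\ell=d-k$ as desired.

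The main obstacle is the affine-independence step. The key mechanism I would exploit is the power-function identity: whenever $v_{\ell}=\sum_{i<\ell}\lambda_{i}v_{i}$ with $\sum_{i<\ell}\lambda_{i}=1$, the two sides of
\[
\pow(y,(v_{\ell},h_{\ell}))=\sum_{i<\ell}\lambda_{i}\,\pow(y,(v_{i},h_{i}))
\]
differ by a constant (their leading and linear terms in $y$ coincide), and this constant vanishes because both sides equal the common value $t$ at $y=w$. When all $\lambda_{i}\ge 0$, the identity forces the Laguerre cell of $(v_{\ell},h_{\ell})$ to be contained in the common bisector of the atoms with $\lambda_{i}>0$, contradicting non-empty interior and hence excluding convex-combination degeneracies from $I^{*}(w)$. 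The residual affinely-dependent-but-not-convex configurations (for instance, four coplanar atoms in convex position) require a genericity argument beyond (P1)--(P3) in the abstract, but in the intended application to the Poisson processes $\eta_{\beta}$ and $\eta'_{\beta}$ of this paper they have probability zero because the intensity measures are absolutely continuous with respect to Lebesgue measure; alternatively one may invoke the normality criterion for Laguerre tessellations of Lautensack and Zuyev cited above. Once affine independence is in hand, $\ell=d-k$ holds for every $k\in\{0,\ldots,d-2\}$, which is the definition of normality, and the lemma follows almost surely.
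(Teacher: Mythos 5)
Your overall strategy---reduce normality to showing that the spatial coordinates of the generators whose cells meet along a $k$-face are affinely independent, then read off the codimension from the $\ell-1$ affine bisector equations with normals $v_i-v_1$---is the same as the paper's, and the dimensional count at the end is correct. The genuine gap is exactly where you concede one: you assert that affinely dependent configurations which are not convex combinations ``require a genericity argument beyond (P1)--(P3)'', and you fall back either on absolute continuity of the intensity measure (which is not a hypothesis of the lemma) or on the Lautensack--Zuyev criterion (which the paper invokes only for the case of negative weights). In fact (P2) and (P3) already suffice, and the paper closes precisely this case with an augmentation trick: if $(v_0,h_0),\ldots,(v_k,h_k)\in\xi$ have $\dim\aff(v_0,\ldots,v_k)=m\le k-1$ and equal powers at some $z\in\aff(v_0,\ldots,v_k)$, one uses (P2) to pick $d-1-m$ further atoms of $\xi$ whose spatial coordinates complete an affinely independent subset $v_0,\ldots,v_m$ to an affinely independent $d$-tuple. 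The resulting linear system has a unique equal-power point $w$, and since $w-z$ is then orthogonal to $\aff(v_0,\ldots,v_k)$, all $k+1$ original atoms also have equal power at $w$. That places $k+d-m\ge d+1$ atoms of $\xi$ on a single downward paraboloid, which (P3) forbids almost surely. So the degenerate case is excluded by the stated hypotheses and no extra genericity assumption is needed.

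A secondary remark: your convex-combination argument (the power identity forcing $C((v_\ell,h_\ell),\xi)$ into a common bisector) is correct but becomes redundant once the augmentation argument is in place, since it treats only a sub-case of the configurations that must be excluded. As written, your proof establishes the lemma only for the particular processes $\eta_\beta$ and $\eta'_\beta$, not for an arbitrary $\xi$ satisfying (P1)--(P3); this defeats the purpose of the abstract formulation, which the authors explicitly set up with a view towards other point processes in part II of the series.
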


\begin{proof}
{The statement follows directly from the interpretation of the Laguerre diagram $\cL(\xi)$ as the  boundary of the polyhedral set $P(\xi)$. Let $\xi'$ be the image of the point process $\xi$ under the transformation $(v,h)\mapsto (v,\|v\|^2+h)$. Condition (P3) is equivalent to the fact that with probability $1$ no $d+1$ points $(w_0,s_0),\ldots, (w_d,s_d)$ from $\xi'$ are lying on the same hyperplane in $\RR^{d}$. From this it also follows that for any $k=1,\ldots, d$ with probability $1$ no $k+1$ points $(w_0,s_0),\ldots, (w_k,s_k)$ are lying on the same $(k-1)$-dimensional hyperplane. Thus, using polarity with respect to the paraboloid $\Pi^+$, we conclude that any $(d-k)$-dimensional face of $P(\xi)$, $k=1,\ldots,d$, is contained in exactly $k$ facets, which implies the normality of $\cL(\xi)$.}
\end{proof}

\begin{remark}
It is easy to see that if the point process $\xi$ satisfies properties (P1) --- (P3) then with probability $1$ there is no non-empty Laguerre cell with vanishing interior. To see this, we again use the correspondence between the Laguerre tessellation $\cL(\xi)$ and the boundary of $P(\xi)$. First of all we note that every Laguerre cell  with vanishing interior corresponds to a face $F$ of $P(\xi)$ with dimension $k\leq d-2$. Thus, there are at least $d-k$ facets of $P(\xi)$ containing $F$ and hence using the polarity relation between hyperplanes with respect to paraboloid $\Pi^+$ there are at least $d-k+1$ points in $\xi'$ lying on the same $(d-1-k)$-dimensional hyperplane. According to (P3) this happens with probability $0$.
\end{remark}

\subsection{Definition of $\beta^{(')}$-Voronoi tessellations}

The $\beta^{(')}$-Voronoi tessellations we are interested in are defined as {random} Laguerre tessellations driven by the Poisson point processes $\eta_\beta$ and  $\eta^{\prime}_\beta$ with intensities given by~\eqref{eq:BetaPoissonIntensity} and~\eqref{eq:BetaPrimePoissonIntensity}.

\begin{lemma}\label{lem:properties_satisfied}
The point processes $\eta_\beta$ for $\beta>-1$ and  $\eta^{\prime}_\beta$ for $\beta>(d+1)/2$ satisfy properties (P1)--(P3) with $E=[0,\infty)$ in the $\beta$-case and $E= (-\infty,0)$ in the $\beta'$-case.
\end{lemma}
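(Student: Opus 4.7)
The plan is to verify (P1), (P2), and (P3) separately for $\xi=\eta_\beta^{(\prime)}$, treating both models in parallel via the sign convention $\kappa\in\{\pm 1\}$ from the excerpt. For (P1), my approach is to compute $\EE\,\xi(A(w,t))$ for the paraboloid region $A(w,t):=\{(v,h)\in\RR^{d-1}\times E\colon \|w-v\|^2+h\leq t\}$; since $\xi$ is Poisson, finiteness of this expectation forces $\xi(A(w,t))<\infty$ almost surely. By Fubini, integrating the spatial coordinate $v$ over a Euclidean ball of $(d-1)$-volume $\kappa_{d-1}(t-h)^{(d-1)/2}$ for each admissible $h$ reduces the computation to a one-dimensional integral of $(\kappa h)^{\kappa\beta}(t-h)^{(d-1)/2}$ against $\dint h$ over $E\cap(-\infty,t]$. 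In the $\beta$-case (where $t\geq 0$) this is a Beta-type integral on $[0,t]$, finite for every $\beta>-1$. In the $\beta'$-case (where $t<0$) the substitution $s=-h$ rewrites it as $\int_{|t|}^{\infty}s^{-\beta}(s-|t|)^{(d-1)/2}\,\dint s$, whose potential singularity at $s=0$ is cut off by the lower endpoint $|t|>0$, and whose convergence at $+\infty$ holds precisely under the hypothesis $\beta>(d+1)/2$.

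For (P2), I would exploit the translation invariance of both intensity measures along $\RR^{d-1}$. Restricting $\xi$ to a strip $\RR^{d-1}\times I$, where $I\subset E$ is a compact subinterval bounded away from $0$, and projecting onto $\RR^{d-1}$ produces a stationary Poisson point process on $\RR^{d-1}$ with constant positive and finite intensity (the height-density $(\kappa h)^{\kappa\beta}$ is continuous and bounded on $I$). Since every closed half-space of $\RR^{d-1}$ has infinite Lebesgue measure, it almost surely contains atoms of such a projection, and a countable dense family of half-spaces makes the null set uniform. Consequently the projected atoms, and a fortiori the atoms of $\xi$, are not contained in any closed half-space, so their convex hull equals $\RR^{d-1}$.

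For (P3), I would apply the multivariate Mecke formula to the expected number of ordered $(d+1)$-tuples of distinct atoms $(v_0,h_0),\ldots,(v_d,h_d)$ of $\xi$ admitting some $(w,t)\in\RR^{d-1}\times E$ with $\|v_i-w\|^2+h_i=t$ for all $i$. Eliminating $t$ via the $i=0$ equation gives $d$ equations linear in $w\in\RR^{d-1}$ with right-hand sides depending linearly on $h_0,\ldots,h_d$; for Lebesgue-a.e.\ choice of $v_0,\ldots,v_d$ the vectors $v_1-v_0,\ldots,v_{d-1}-v_0$ are linearly independent, so the first $d-1$ equations uniquely determine $w$, and the $d$-th equation becomes a single affine constraint on $h_d$ given everything else. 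Since $h_d$ is integrated against the absolutely continuous density $(\kappa h)^{\kappa\beta}\mathbf{1}\{\kappa h>0\}$, the inner integral over $h_d$ vanishes; hence the whole Mecke integral is zero, and by Markov's inequality no such configuration arises almost surely.

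The main obstacle is the boundary analysis for (P1) in the $\beta'$-case: the density $(-h)^{-\beta}$ is non-integrable at $h=0$, and the argument crucially uses the fact that $t\in E=(-\infty,0)$ keeps the integration domain bounded away from $h=0$, leaving only the tail as $h\to-\infty$ to be controlled by the parameter constraint $\beta>(d+1)/2$. All remaining steps are routine measure-theoretic bookkeeping once the $\kappa$-notation has been set up.
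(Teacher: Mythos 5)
Your proposal is correct and follows essentially the same route as the paper's proof: (P1) via a first-moment bound on the number of atoms in the paraboloid region (a bounded domain with locally integrable intensity in the $\beta$-case, and a tail integral whose convergence is exactly the condition $\beta>(d+1)/2$ in the $\beta'$-case), (P2) from the fact that the projection to $\RR^{d-1}$ has infinite intensity over every ball and hence cannot be confined to a half-space, and (P3) via the multivariate Mecke formula combined with the absolute continuity of the intensity measure. The only cosmetic difference is in (P3), where you fix the spatial coordinates and isolate an affine constraint on $h_d$, whereas the paper observes directly that the last point must lie on a fixed paraboloid surface of measure zero; both yield a vanishing inner integral.
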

\begin{proof}
Property (P2) holds because the projections of the Poisson point processes $\eta_\beta$ and $\eta_\beta^\prime$ to the space component $\RR^{d-1}$ are everywhere dense sets, with probability $1$. Indeed, the integrals of the intensities of these processes over any set of the form $B\times \RR$, where $B\subset \RR^{d-1}$ is a non-empty ball, are infinite, which means that infinitely many points project to $B$ almost surely.

Property (P3) holds for any Poisson point process $\eta$ in $\RR^{d-1}\times E$ whose intensity measure $\Theta_{\eta}$ is absolutely continuous with respect to the Lebesgue measure with density $\varrho$, say (see, for example, \cite[Proposition 4.1.2]{Mo94} for a closely related result in the setting of a stationary Poisson point process). Let us verify (P3) applying the multivariate Mecke formula \cite[Corollary 3.2.3]{SW}. Write $\eta_{\neq}^{d+1}$ for the set of $(d+1)$-tuples of distinct points of $\eta$ and $\Pi(x_1,\ldots,x_d)$ for the unique downward paraboloid on which points $x_1,\ldots,x_d\in\RR^d$ are lying. With this notation we have that
\begin{align*}
\EE&\sum_{(x_1,\ldots,x_{d+1})\in \eta^{d+1}_{\neq}}{\bf 1}(x_1,\ldots,x_{d+1}\text{ lie on the same paraboloid})\\
&={1\over (d+1)!}\int_{\RR^{d-1}}\ldots \int_{\RR^{d-1}}\int_{E}\ldots \int_{E}\;\;\prod_{i=1}^{d}\varrho(h_i,v_i)\,\dd v_1\ldots \dd v_{d}\,\dd h_1\ldots \dd h_{d}\\
&\qquad\times \int_{\RR^{d-1}}\int _{E} {\bf 1}((v_{d+1}, h_{d+1})\in\Pi((v_1,h_1),\ldots,(v_d,h_d)))\varrho(h_{d+1},v_{d+1})\,\dd v_{d+1}\,\dd h_{d+1} = 0,
\end{align*}
since the inner integral is equal to $0$.

Verification of property (P1) requires additional computations.  To consider the case of $\eta_\beta$, fix $w\in\RR^{d-1}$ and $t>0$. The inequalities in (P1) describe the bounded domain
$$
D := \{(v,h)\in \RR^{d-1}\times \RR_+: \|v-w\|^2 + h \leq t\}
$$
lying below the paraboloid $h = t-\|v-w\|^2$ and above the hyperplane $h=0$.  Since the intensity measure of the Poisson point process $\eta_\beta$ is locally integrable due to the condition $\beta>-1$, there are only finitely many points $(v,h)$ of $\eta_\beta$ in $D$  and Property (P1) holds.

In order to check (P1) for $\eta^{\prime}_\beta$, we fix $w\in\RR^{d-1}$ and $t<0$. We need to show that the downward paraboloid
$$
D: = \{(v,h)\in \RR^{d-1}\times \RR_-^*: \|v-w\|^2 + h \leq t\}
$$
contains only finitely many points of $\eta_\beta^\prime$ a.s. Using the stationarity of the process $\eta_\beta^\prime$ in the space coordinate, we can put $w=0$ without loss of generality.   The expected number of  points of $\eta_\beta^\prime$ in $D$ is then given by
\begin{align*}
\EE \sum_{(v,h)\in\eta^{\prime}_\beta} {\bf 1}(\|v\|^2+h\leq t)
&=
\gamma\,c_{d,\beta}^{\prime}
\int_{\RR^{d-1}}\int_{0}^\infty {\bf 1}(\|v\|^2-s\leq t) s^{-\beta}\dd s\,\dd v\\
&=
\frac{\gamma\,c_{d,\beta}^{\prime}}{1-\beta}
\int_{\RR^{d-1}} (\|v\|^2 + |t|)^{1-\beta} \dd v\\
&={\gamma \, c_{d,\beta}^{\prime}\over  1-\beta}\,(d-1)\kappa_{d-1}\int_0^\infty(r^2+|t|)^{1-\beta}r^{d-2}\,\dint r\\
&=
{\gamma \, c_{d,\beta}^{\prime}\over  (1-\beta)c_{d-1,\beta-1}^{\prime}}|t|^{d+1-2\beta\over 2}<\infty,
\end{align*}
where introduced spherical coordinates in $\RR^{d-1}$, applied the definition of the constants $\kappa_{d-1}$ and $c_{d-1,\beta-1}^{\prime}$, and used the condition that $\beta> (d+1)/2$ to ensure the finiteness of the integral over $\RR^{d-1}$. This completes the proof.
\end{proof}

Summarizing, we conclude that the Laguerre tessellations $\cL(\eta_\beta)$ and $\cL(\eta^{\prime}_\beta)$ are with probability one stationary and normal random tessellations in $\RR^{d-1}$. We can thus state the following definition.

\begin{definition}
The random tessellation $\cV_\beta:=\cL(\eta_\beta)$ is called the \textbf{$\beta$-Voronoi tessellation} and the random tessellation $\cV^{\prime}_\beta:=\cL(\eta^{\prime}_\beta)$ is called the \textbf{$\beta'$-Voronoi tessellation} in $\RR^{d-1}$.
\end{definition}

Let us emphasize that even though the Poisson point processes $\eta_\beta$, respectively $\eta^{\prime}_{\beta}$, are actually well-defined on $\RR^{d-1}\times (0,\infty)$, respectively $\RR^{d-1}\times (-\infty,0)$, for every $\beta\in \RR$,  the corresponding tessellations are well-defined under conditions $\beta>-1$, respectively $\beta>(d+1)/2$, only (because otherwise condition (P1) is not satisfied).

\subsection{Definition of  $\beta^{(')}$-Delaunay tessellations}

Given a Laguerre diagram $\cL(\xi)$ we can associate to it a so-called dual Laguerre diagram $\cL^*(\xi)$, which can be defined in the same spirit as a classical Delaunay diagram for given Voronoi construction. 

Let $\xi$ be a Poisson point process in $\RR^{d-1}\times E$, $E\subset \RR$ satisfying properties (P1) --- (P3). Then $\cL(\xi)$ is a random normal face-to-face tessellation and we denote by $\cF_0(\cL(\xi))$ the set of its vertices. Further, given a point $z\in \cF_0(\cL(\xi))$ we construct a Delaunay cell $D(z,\xi)$ as a convex hull of those $v$ for which $(v,h)\in\xi$ and $z\in C((v,h),\xi)$, namely
$$
D(z,\xi): = \conv(v\colon (v,h)\in\xi, z\in C((v,h),\xi)).
$$
Since the tessellation $\cL(\xi)$ is normal with probability $1$, for every vertex $z\in \cF_0(\cL(\xi))$ there exists exactly $d$ points $x_1,\ldots, x_d$ of $\xi$ such the corresponding cells $C(x_1,\xi),\dots, C(x_d,\xi)$ of the Laguerre tessellation $\cL(\xi)$ contain $z$. Thus, $D(z,\xi)$ is a simplex with probability $1$. We define the \textbf{dual Laguerre diagram} $\cL^*(\xi)$ as a collection of all Delaunay simplices
$$
\cL^*(\xi):=\{D(z,\xi)\colon z\in \cF_0(\cL(\xi))\}.
$$
From the above construction it follows that for any $z\in \cF_0(\cL(\xi))$ there exists a number $K_{z}\in\RR$ such that with probability $1$ there exist exactly $d$ points $(v_1,h_1),\ldots, (v_d,h_d)$ of $\xi$ with
$$
\pow(z, (v_1,h_1)) = \ldots = \pow(z, (v_d,h_d)) = K_{z}
$$
and there is no $(v,h)\in\xi$ with $\pow(z,(v,h))<K_{z}$. Consider the set
\begin{equation}\label{eq:ApexProcess}
\xi^*:=\left\{(z,-K_{z})\in\RR^{d-1}\times\RR\colon z\in \cF_0(\cL(\xi))\right\}.
\end{equation}
It turned out to be that the dual Laguerre diagram $\cL^*(\xi)$ is a Laguerre diagram constructed for the set $\xi^*$ and that $\xi^*$ satisfies properties (P1) and (P2) if $\xi$ satisfies (for the proof of those facts see \cite[Proposition 2]{Sch93}). Thus, by Lemma \ref{lem:properties_satisfied} we conclude that $\cL^*(\xi)=\cL(\xi^*)$ is random face-to-face simplicial tessellation.

{As for the usual Laguerre diagram $\cL(X)$, where $X$ is some countable set, there is an alternative construction of the dual Laguerre diagram $\cL^*(X)$, which is more common in computational geometry, see \cite[Chapter 17,18]{BY98}. It can be described as follows. Interpreting points of $(v,h)\in X$ as spheres with center $v$ and radius $\sqrt{-h}\in\mathbb{C}$ and applying the transformation $\phi$ introduced in Section \ref{sec:lag_diag} we obtain a set of points $X':=\{(v,\|v\|^2+h)\colon (v,h)\in X\}$ in $\RR^{d}$. Then the dual Laguerre diagram $\cL^*(X)$ is the vertical projection along the $d$-th coordinate of the boundary of the polyhedral set $P^*(X'):=\conv(X')^{\uparrow}$. It is also common in computational geometry to call $\cL^*(X)$ a "regular triangulation".
}

\begin{figure}[!t]
\centering
\includegraphics[width=0.48\textwidth]{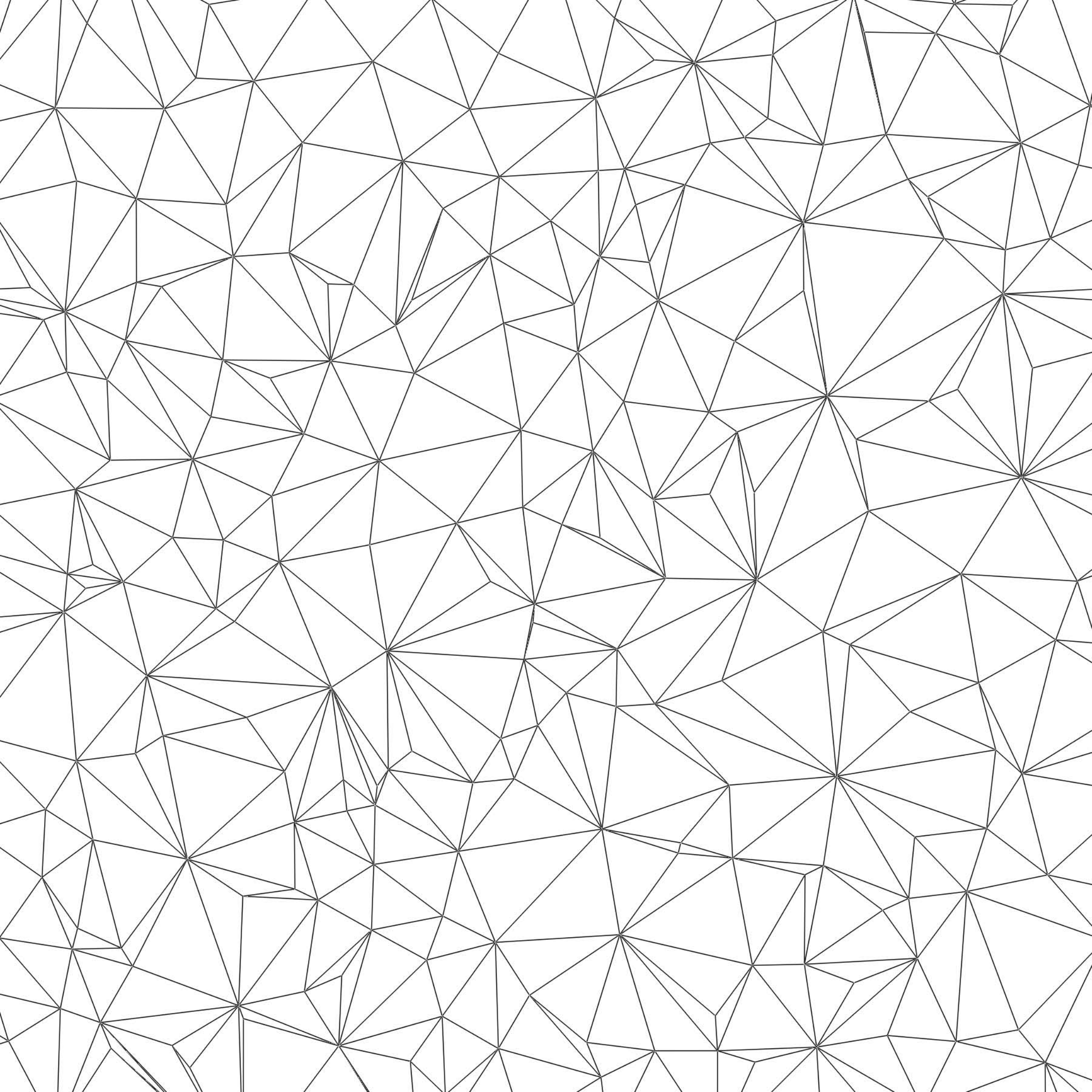}\hspace{0.2cm}
\includegraphics[width=0.48\textwidth]{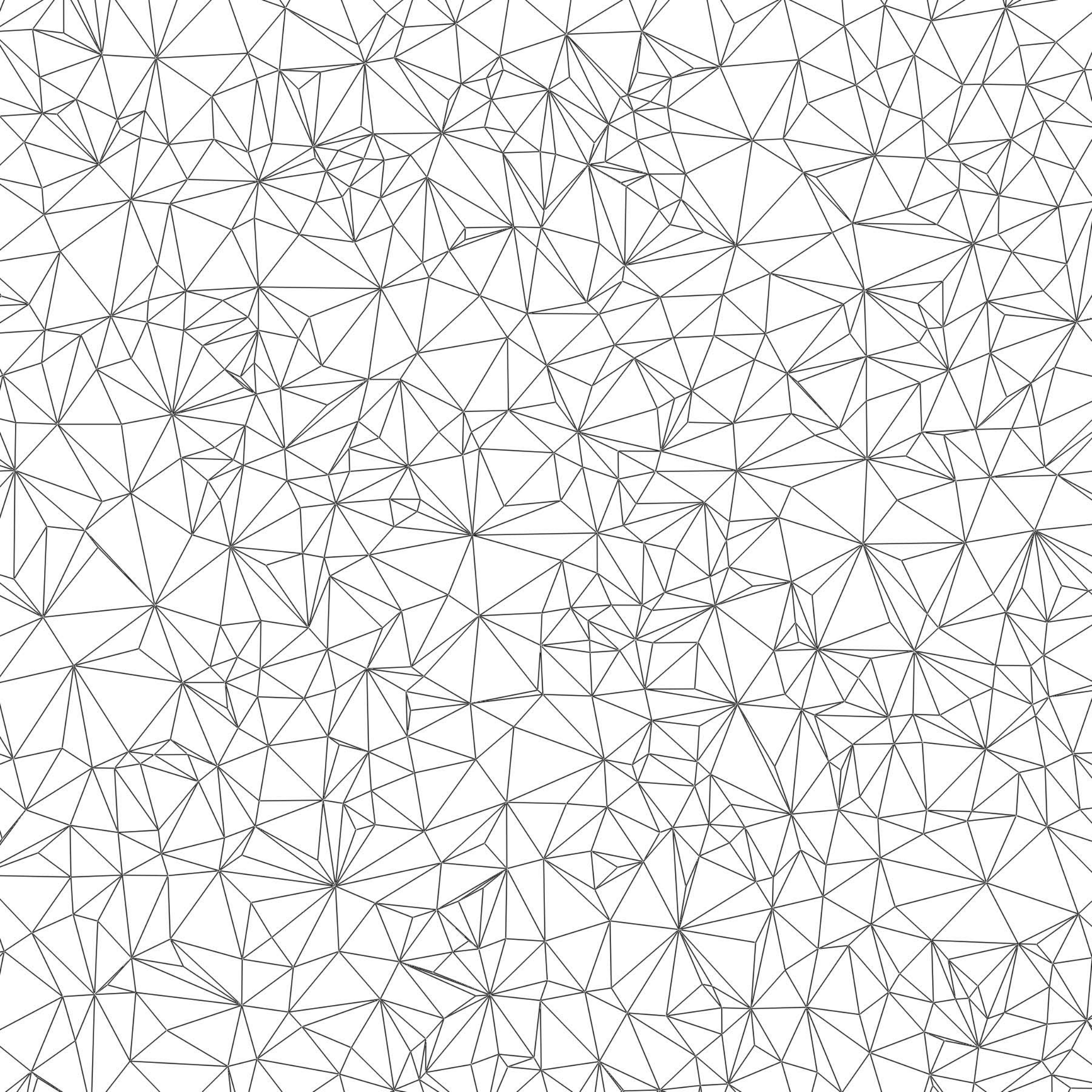}
\caption{Realization of $\beta$-Delaunay tessellation in $\RR^2$. Left: $\beta=5$. Right: $\beta=15$. The pictures above have been created with the help of the software project "The Computational Geometry Algorithms Library" (CGAL) \cite{CGAL}.}
\label{fig:beta-tessellations}
\end{figure}

We will be interested in the case when $\xi$ is one of the Poisson point processes $\eta_\beta$ or $\eta_{\beta}^\prime$.

\begin{definition}
The random tessellation $\cD_\beta:=\cL^*(\eta_{\beta})$ is called the \textbf{$\beta$-Delaunay tessellation} in $\RR^{d-1}$, while the random tessellation $\cD^{\prime}_\beta:=\cL^*(\eta^{\prime}_{\beta})$ is called the \textbf{$\beta'$-Delaunay tessellation} in $\RR^{d-1}$.
\end{definition}

\subsection{Paraboloid hull process}\label{sec:ParabHullProc}

The paraboloid hull process was first introduced in \cite{SY08} and  \cite{CSY13} in order to study the asymptotic geometry of the convex hull of Poisson point processes in the unit ball. It is designed to exhibit properties analogous to those of convex polytopes with the paraboloids playing the role of hyperplanes, with the spatial coordinates $v$ playing the role of spherical coordinates and with the height coordinates $h$ playing the role of the radial coordinate. The numerous properties of the paraboloid hull process, which are analogous to standard statements of convex geometry, have been developed in \cite[Section 3]{CSY13} and we refer to this paper for further information and background material. At this point let us mention without making the statement precise and proving it, that the $\beta$-Delaunay tessellation we are interested in describes the local asymptotic structure (near the boundary of the unit sphere) of the so-called beta random polytope~\cite{beta_polytopes} in the $d$-dimensional unit ball generated by $n$ points, as $n\to\infty$. After rescaling, the unit sphere looks locally like $\RR^{d-1}$, while the boundary of the beta random polytope (projected to the sphere) looks locally like the $\beta$-Delaunay tessellation.

\begin{figure}[!t]
\centering
\includegraphics[width=0.48\textwidth]{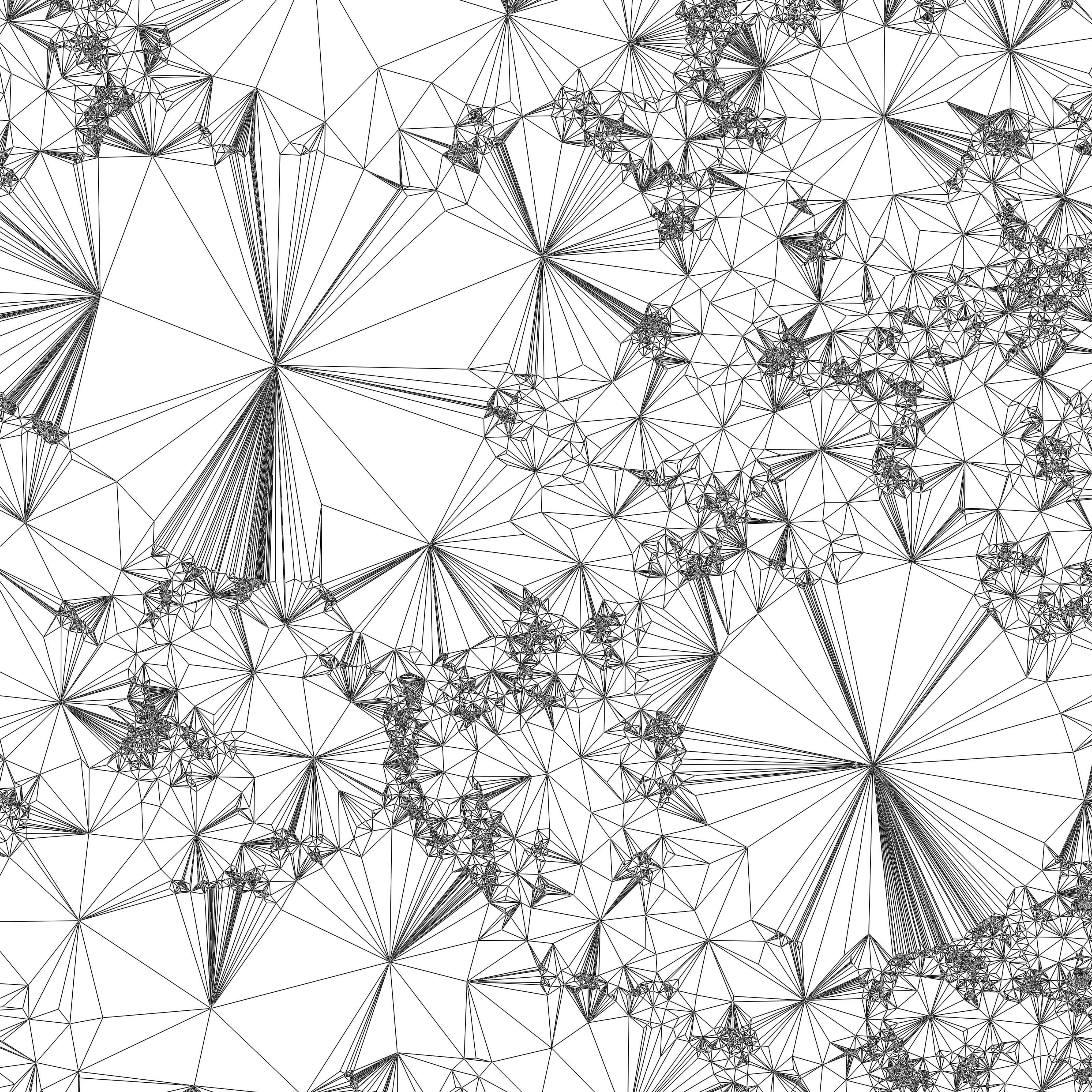}\hspace{0.2cm}
\includegraphics[width=0.48\textwidth]{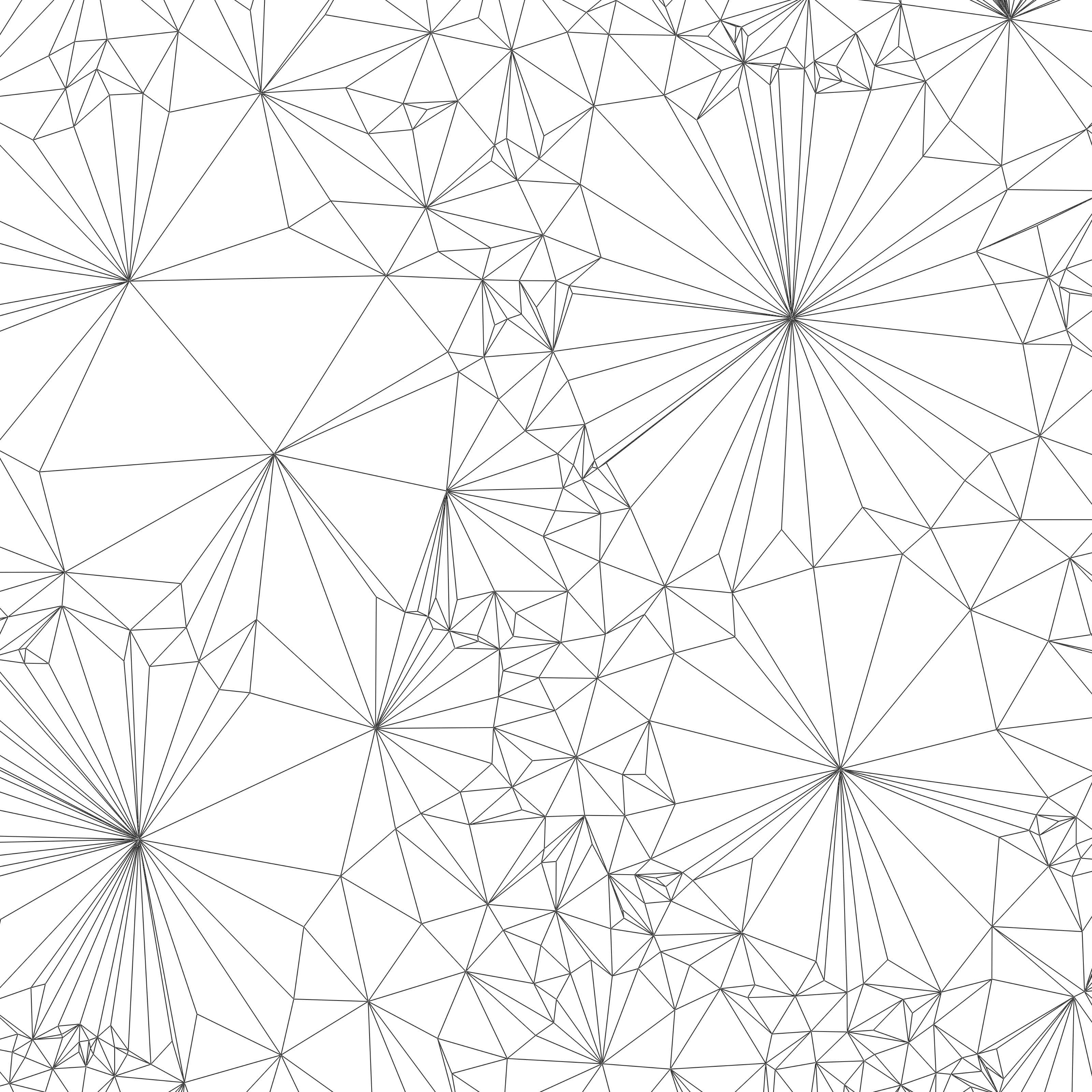}
\caption{Realization of $\beta^{'}$-Delaunay tessellation in $\RR^2$. Left: $\beta=2.1$. Right: $\beta=2.5$. The pictures above have been created with the help of the software project "The Computational Geometry Algorithms Library" (CGAL) \cite{CGAL}.}
\label{fig:betaprime-tessellations}
\end{figure}

The idea is that the shifts of the hypographs of standard paraboloid $\Pi^{\downarrow}$ are, in some sense, analogous to the half-spaces in $\RR^{d}$ not containing the origin $0$ in their boundary. For any collection $x_1:=(v_1,h_1),\ldots,x_k:=(v_k,h_k)$ of $k\leq d$ points in $\RR^{d-1}\times\RR$ with affinely independent coordinates $v_1,\ldots,v_k$, we define $\Pi(x_1,\ldots,x_k)$ as the intersection of $\aff(v_1,\ldots,v_k)\times\RR$ with a translation of $\Pi$ containing all points $x_1,\ldots,x_k$. It should be noted that the set $\Pi(x_1,\ldots,x_k)$ is well-defined, although for $k<d$ the translation of $\Pi$ containing all $x_1,\ldots,x_k$ is not unique. Nevertheless, for $k=d$ and all
tuples $x_1,\ldots,x_d$ with affinely independent spatial coordinates $v_1,\ldots,v_d$ such a translation is unique. Then we define $\Pi[x_1,\ldots,x_k]$ as
\[
\Pi[x_1,\ldots,x_k]:=\Pi(x_1,\ldots,x_k)\cap \left(\conv(v_1,\ldots,v_k)\times\RR\right).
\]

We will say that a set $A\subset \RR^d$ has the paraboloid convexity property if for each $y_1,y_2\in A$ we have $\Pi[y_1,y_2]\subset A$. Clearly, $\Pi[x_1,\ldots,x_k]$ is the smallest set containing $x_1,\ldots,x_k$ and having the paraboloid convexity property. Next, we say the set $A\subset \RR^d$ is upwards paraboloid convex if and only if $A$ has the paraboloid convexity property and if for each $x=(v,h)\in A$ we have $\{x\}^{\uparrow}\subset A$.

Finally, given a locally finite point set $X\subset\RR^d$ we define its \textbf{paraboloid hull} $\Phi(X)$ to be the smallest upwards paraboloid convex set containing $X$. In particular, given a Poisson point process $\xi$ in $\RR^{d-1}\times E$, $E\subset\RR$, we define the \textbf{paraboloid hull process} $\Phi(\xi)$ in $\RR^{d-1}\times E$ as the paraboloid hull of $\xi$.

Using the arguments analogous to \cite[Lemma 3.1]{CSY13} it is easy to derive an alternative and more convenient way to represent $\Phi(\xi)$, namely with probability $1$ we have that
\[
\Phi(\xi)=\bigcup\limits_{(x_1,\ldots,x_d)\in\xi_{\neq}^d}\left(\Pi[x_1,\ldots, x_d]\right)^{\uparrow},
\]
where $\xi_{\neq}^d$ is the collection of all $d$-tuples of distinct points of $\xi$.

For $(x_1,\ldots,x_d)\in\xi_{\neq}^d$ the set $\Pi[x_1,\ldots,x_d]$ is called a paraboloid sub-facet of $\Phi(\xi)$ if $\Pi[x_1,\ldots,x_d] \subset\partial \Phi(\xi)$. Two paraboloid sub-facets $\Pi[x_1,\ldots,x_d]$ and $\Pi[y_1,\ldots,y_d]$ are called co-paraboloid provided that $\Pi(x_1,\ldots,x_d)=\Pi(y_1,\ldots,y_d)$ and by \textbf{paraboloid facet} of $\Phi(\xi)$ we understand the collection of co-paraboloid sub-facets. Since $\xi$ is a Poisson point process each paraboloid facet of $\Phi(\xi)$ with probability one consists of exactly one sub-facet. Thus, we can say, that $\Pi[x_1,\ldots,x_d]$ is a paraboloid facet of $\Phi(\xi)$ if and only if $\xi \cap \left(\Pi(x_1,\ldots,x_d)\right)^{\downarrow}=\{x_1,\ldots, x_d\}$.

Using the paraboloid hull processes $\Phi(\xi)$ we construct now a diagram $\cD(\xi)$ on $\RR^{d-1}$ in the following way: for any collection $x_1:=(v_1,h_1),\ldots,x_d:=(v_d,h_d)$ of pairwise distinct points from $\xi$ we say that the simplex $\conv(v_1,\ldots,v_d)$ belongs to $\cD(\xi)$ if and only if $\Pi[x_1,\ldots,x_k]$ is a paraboloid facet of $\Phi(\xi)$. Thus, if $\xi$ satisfies properties (P1)--(P3), then $\cD(\xi)=\cL^{*}(\xi)$ is a random simplicial tessellation.

It is clear now that the tessellations $\cD(\eta_{\beta})$ and $\cD(\eta^{\prime}_{\beta})$ coincide with $\beta$-Poisson-Delaunay and $\beta'$-Poisson tessellations (respectively), defined in the previous subsection.

\section{Weighted typical cells in $\beta$- and $\beta^{\prime}$-Delaunay  tessellations}\label{sec:TypicalCells}

\subsection{Definition of the $\nu$-weighted typical cell}

In this section we derive an explicit representation of the distribution of typical cells in a $\beta$-Delaunay tessellation $\cD_\beta$ with parameter $\beta > -1$ and a $\beta^{\prime}$-Delaunay  tessellation $\cD^{\prime}_\beta$ with parameter $\beta > (d+1)/2$ as described in the previous sections, and, more generally, the distribution of typical cells weighted by the $\nu$-th power of their volume, with $\nu\ge-1$.
On the intuitive level, the construction presented below can be understood as follows. Consider the collection of all cells of $\cD_\beta$ or $\cD^{\prime}_\beta$ and  assign to each cell a weight equal to the $\nu$-th power of its volume. Then, pick one cell at random, where the probability of picking each cell is proportional its $\nu$-th volume power. The resulting random simplex is denoted by $Z_{\beta,\nu}$, respectively $Z_{\beta,\nu}^\prime$, and its probability distribution on the space of compact convex subsets of $\RR^{d-1}$ is denoted by $\PP_{\beta,\nu}$, respectively $\PP_{\beta,\nu}^\prime$.
Since the number of cells in the tessellation is infinite, some work is necessary in order to define these objects in a mathematically rigorous way. The reader should keep  in mind the following two important special cases:
\begin{itemize}
\item[(i)] $\nu=0$: $Z_{\beta,0}$ and $Z_{\beta,0}^{\prime}$ coincide with the classical typical cell of $\cD_\beta$ and $\cD_\beta^{\prime}$, respectively;
\item[(ii)] $\nu=1$:  $Z_{\beta,1}$ and $Z_{\beta,1}^{\prime}$ coincide the volume-weighted typical cell of $\cD_\beta$ and $\cD_\beta^{\prime}$, respectively (which has the same distribution as the a.s.\ unique cell containing the origin, up to translation; see Theorem~10.4.1 in~\cite{SW}).
\end{itemize}

To formally present the definition of volume-power weighted typical cells, we use the concept of generalized centre functions and Palm calculus for marked point processes as outlined in \cite[p.\ 116]{SW} and \cite[Section 4.3]{SWGerman}. Following the arguments from Subsection \ref{sec:Laguerre_tess} and Subsection \ref{sec:ParabHullProc}, a random tessellation $\cD(\xi)$, where $\xi$ is a point process in $\RR^{d-1}\times E$, $E\subset \RR$ satisfying (P1)--(P3), coincides with the Laguerre tessellation of the random set $\xi^*$ described by \eqref{eq:ApexProcess}. In this section we additionally assume that $\xi$ is stationary with respect to the shifts of the $\RR^{d-1}$-component, which implies stationarity of the tessellation $\cD(\xi)$. Observe that $\xi^*$ can alternatively be described via the set of apexes of paraboloid facets of the paraboloid hull process $\Phi(\xi)$, that is,
\[
 \xi^*=\{(v,h)\colon (v,-h)=\apex\Pi(x_1,\ldots,x_d),\, x_i\in\xi,1\leq i\leq d,\,\conv(v_1,\ldots,v_d)\in\cD(\xi)\}.
\]
Let $\cC'$ denote the space of non-empty compact subsets of $\RR^{d-1}$ endowed with the usual Hausdorff metric and the corresponding Borel $\sigma$-field $\cB(\cC')$. The random tessellation $\cD(\xi)$ (which is defined as a random subset of $\cC'$) can be identified with the particle process $\sum_{c\in\cD(\xi)}\delta_c$, see \cite[Chapter 4]{SW}. Formally, this is a simple point process on $\cC'$, or equivalently, a random element in the space $\sfN_s(\cC')$ of $\sigma$-finite simple counting measures on $\cC'$ (a counting measure $\zeta$ on $\cC'$ is simple if $\zeta(\{K\})\in\{0,1\}$ for all $K\in\cC'$). Next, we define the measurable set $\cC'\circ\sfN(\cC'):=\{(K,\zeta)\in\cC' \times \sfN_s(\cC'):K\in\zeta\}$ and recall that a generalized centre function is any Borel-measurable map $z:\cC'\circ\sfN(\cC')\to\RR^{d-1}$ such that $z(K+t, \zeta+t) = z(K,\zeta)+t$ for every $t\in\RR^{d-1}$ and $(K,\zeta)\in\cC'\circ\sfN(\cC')$. In our case we take
$$
z(K,\zeta):=\begin{cases}
v &: K=C((v,h), \xi^*),\,\zeta ={\cD}(\xi)\\
0 &: \text{otherwise},
\end{cases}
$$
where we recall that $C((v,h), \xi^*)$ is the Laguerre cell of $(v,h)\in \xi^*$.

In a next step, we consider the random marked point process $\mu_{\xi}$ on $\RR^{d-1}$ with mark space $\cC'$, formed as follows:
\[
\mu_{\xi}:=\sum\limits_{(v,h)\in \xi^*}\delta_{(v,M)},\qquad M:=C((v,h), \xi^*)-v.
\]
It is evident from the construction that the point process $\mu_{\xi}$ is stationary and that the intensity measure $\Theta$ of $\mu_{\xi}$ is locally finite. Thus, according to \cite[Theorem 3.5.1]{SW} it admits the decomposition
\[
\Theta=\lambda\,[{\rm Leb}(\RR^{d-1})\otimes\PP_{\xi,0}],
\]
where $0<\lambda<\infty$, ${\rm Leb}(\RR^{d-1})$ is the Lebesgue measure on $\RR^{d-1}$ and $\PP_{\xi,0}$ is a probability measure on $\cC'$, the so-called mark distribution of $\mu_{\xi}$. By \cite[p.\ 84]{SW} it can be represented as
$$
\PP_{\xi,0}(A) := {1\over\lambda}\EE\sum_{(v,M)\in\mu_\xi}{\bf 1}_A(M){\bf 1}_{[0,1]^{d-1}}(v),
$$
where $[0,1]^{d-1}$ denotes the $(d-1)$-dimensional unit cube. The probability measure $\PP_{\xi,0}$ describes the mark attached to the typical point of $\mu_{\xi}$, that is, the typical cell of the tessellation $\cD(\xi)$. This motivates the following definition.

For a given $\nu$ we define a probability measure $\PP_{\xi,\nu}$ on $\cC'$ by
\begin{equation}\label{eq_2}
\PP_{\xi,\nu}(A) := {1\over \lambda_{\xi,\nu}}\EE\sum_{(v,M)\in\mu_{\xi}}{\bf 1}_A(M){\bf 1}_{[0,1]^{d-1}}(v)\Vol(M)^\nu
\end{equation}
for $A\in  \cB(\cC')$, where  $\lambda_{\xi,\nu}$ is the normalizing constant given by
\begin{equation}\label{eq:def_gamma_const}
\lambda_{\xi,\nu}:= \EE\sum_{(v,M)\in\mu_{\xi}}{\bf 1}_{[0,1]^{d-1}}(v)\Vol(M)^\nu.
\end{equation}
It should be mentioned, that for some values of $\nu$ the value $\lambda_{\xi,\nu}$  can be equal to infinity. This is the reason why for any point process $\xi$ one needs to specify possible values of $\nu$.

\begin{definition}
A random simplex $Z_{\beta,\nu}$, where $\nu \ge -1$ and $\beta>-1$, with distribution $\PP_{\beta,\nu}:=\PP_{\eta_\beta,\nu}$ is the {\bf  $\Vol^{\nu}$-weighted} (or just {\bf $\nu$-weighted}) {\bf typical cell} of the $\beta$-Delaunay tessellation $\cD_\beta$.
\end{definition}
\begin{definition}
A random simplex $Z_{\beta,\nu}^{\prime}$, where $\beta>(d+1)/2$ and $2\beta - d>\nu\ge -1$, with distribution $\PP_{\beta,\nu}^{\prime}:=\PP_{\eta_\beta^{\prime},\nu}$ is the {\bf $\Vol^{\nu}$-weighted typical cell} of the $\beta^{\prime}$-Delaunay tessellation $\cD^{\prime}_\beta$.
\end{definition}

\begin{remark}
That the constants $\lambda_{\beta,\nu}:=\lambda_{\eta_{\beta},\nu}$ and $\lambda^{\prime}_{\beta,\nu}:=\lambda_{\eta_{\beta}^{\prime},\nu}$ are in fact finite for the ranges of $d$, $\beta$ and $\nu$ mentioned in the previous definition will be established in the proof of Theorem~\ref{theo:typical_cell_stoch_rep}.
\end{remark}

\begin{remark}
We also conjecture that it is possible to enlarge the diapason of possible values for parameter $\nu$ to $\nu>-2$.
\end{remark}

\subsection{Stochastic representation of the $\nu$-weighted typical cell}

After having introduced the concept of weighted typical cells, we are now going to develop an explicit description of their distributions. In fact, the following theorem may be considered as our main contribution in this paper, since it is the principal device on which most of the results in this part, but also in part II and III of this series of papers are based on. To present it, let us recall our convention that $\kappa=1$ if we consider the $\beta$-model and that $\kappa=-1$ in case of the $\beta'$-model.

\begin{theorem}\label{theo:typical_cell_stoch_rep}
Fix $d\geq 2$, $\nu\ge-1$ and $\beta>-1$ for the $\beta$-model or $2\beta - d>\nu\ge -1$, $\beta>(d+1)/2$ for the $\beta^{\prime}$-model. Then for any Borel set $A\subset \cC'$ we have that
\begin{align*}
\PP_{\beta,\nu}^{(\prime)}(A)
&=
\alpha_{d,\beta,\nu}^{(\prime)}\int_{(\RR^{d-1})^d}\dd y_1\ldots \dd y_d \, \int_{0}^{\infty}\dd r\,{\bf 1}_A(\conv(ry_1,\ldots,ry_d)) r^{2\kappa d\beta+d^2+\nu(d-1)}\notag\\
&\qquad\times e^{-m^{(\prime)}_{d,\beta} r^{d+1+2\kappa\beta}}
\Delta_{d-1}(y_1,\ldots,y_d)^{\nu+1}\prod_{i=1}^d(1-\kappa\|y_i\|^2)^{\kappa\beta}{\bf 1}(1-\kappa\|y_i\|^2\ge 0),
\end{align*}
where $\Delta_{d-1}(y_1,\ldots,y_d)$ is the volume of $\conv(y_1,\ldots,y_d)$ and $\alpha_{d,\beta,\nu}$, $\alpha^{\prime}_{d,\beta,\nu}$, $m_{d,\beta}$  and $m^{\prime}_{d,\beta}$ are constants given by
\begin{align}
m_{d,\beta}^{(\prime)}&=\gamma\,c_{d,\beta}^{(\prime)}(\pi c_{d+1,\beta}^{(\prime)})^{-1},\label{eq:Constant}\\
\alpha_{d,\beta,\nu}&=\pi^{d(d-1)\over 2}\,{(d-1)!^{\nu+1}(d+1+2\beta)\Gamma({d(d+\nu+2\beta)-\nu+1\over 2})\over \Gamma({d(d+\nu+2\beta)\over 2}+1)\Gamma(d+{(\nu-1)(d-1)\over d+2\beta+1})}\Big({\gamma\,\Gamma({d\over 2}+\beta+1)\over \sqrt{\pi}\Gamma({d+1\over 2}+\beta+1)}\Big)^{d+{(\nu-1)(d-1)\over d+2\beta+1}}\notag\\
&\qquad\qquad\times{\Gamma({d+\nu\over 2}+\beta+1)^d\over \Gamma(\beta+1)^d}\prod\limits_{i=1}^{d-1}{\Gamma({i\over 2})\over \Gamma({i+\nu+1\over 2})},\label{eq:Alpha}\\
\alpha^{\prime}_{d,\beta,\nu}&=\pi^{d(d-1)\over 2}\,{(d-1)!^{\nu+1}(d+1-2\beta)\Gamma({d(2\beta-d-\nu)\over 2})\over \Gamma({d(2\beta-d-\nu)+\nu+1\over 2})\Gamma(d+{(\nu-1)(d-1)\over d-2\beta+1})}\Big({\gamma\,\Gamma(\beta-{d+1\over 2})\over \sqrt{\pi}\Gamma(\beta-{d\over 2})}\Big)^{d+{(\nu-1)(d-1)\over d-2\beta+1}}\notag\\
&\qquad\qquad\times{\Gamma(\beta)^d\over \Gamma(\beta-{d+\nu\over 2})^d}\prod_{i=1}^{d-1}{\Gamma({i\over 2})\over \Gamma({i+\nu+1\over 2})}.\label{eq:AlphaPrime}
\end{align}
\end{theorem}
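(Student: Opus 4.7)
The plan is to unfold~\eqref{eq_2} via the multivariate Mecke formula applied to the driving Poisson process $\eta_\beta^{(\prime)}$ and then to perform the change of variables that parameterizes each $\beta^{(\prime)}$-Delaunay simplex by the apex of its supporting paraboloid together with the normalized positions of its $d$ vertices on that paraboloid. Each cell of $\cD_\beta^{(\prime)}=\cL^{*}(\eta_\beta^{(\prime)})$ corresponds uniquely to an unordered $d$-subset $\{x_1,\ldots,x_d\}\subset\eta_\beta^{(\prime)}$ satisfying $\eta_\beta^{(\prime)}\cap\Pi(x_1,\ldots,x_d)^{\downarrow}=\{x_1,\ldots,x_d\}$ (Subsection~\ref{sec:ParabHullProc}); summing over ordered tuples, dividing by $d!$, applying the Mecke formula \cite[Cor.~3.2.3]{SW}, and using the Poisson void probability yields
\[
d!\,\lambda_{\beta,\nu}^{(\prime)}\,\PP_{\beta,\nu}^{(\prime)}(A)=\int\mathbf{1}_A(\Sigma-z)\,\mathbf{1}_{[0,1]^{d-1}}(z)\,\Vol(\Sigma)^{\nu}\,e^{-\mu(\Pi^{\downarrow})}\prod_{i=1}^d\gamma\,c_{d,\beta}^{(\prime)}\,(\kappa h_i)^{\kappa\beta}\,\dd v_i\,\dd h_i,
\]
where $\Sigma:=\conv(v_1,\ldots,v_d)$ and $(z,K)$ denotes the (unique) apex of $\Pi(x_1,\ldots,x_d)$; the exponential comes from $\PP(\eta_\beta^{(\prime)}(\Pi^{\downarrow})=0)=e^{-\mu(\Pi^{\downarrow})}$.

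Next, I substitute $v_i=z+r y_i$ and $\kappa h_i=r^2(1-\kappa\|y_i\|^2)$ with $r=\sqrt{\kappa K}>0$ and $1-\kappa\|y_i\|^2\ge 0$. This is a measurable bijection between admissible configurations $(x_1,\ldots,x_d)$ (with $v_1,\ldots,v_d$ affinely independent) and tuples $(z,r,y_1,\ldots,y_d)\in\RR^{d-1}\times\RR_{+}\times\{y\colon 1-\kappa\|y\|^2\ge 0\}^d$. Organising the $d^2\times d^2$ Jacobian with columns $(y,z,r)$, the $y$-to-$v$ block is diagonal $r I_{d(d-1)}$, and a Schur-complement reduction produces a $d\times d$ block whose rows are $(2ry_i,\,2r)$; factoring $2r$ from every column leaves the standard matrix with rows $(y_i,1)$ whose determinant is $(d-1)!\,\Delta_{d-1}(y_1,\ldots,y_d)$. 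Hence
\[
\Bigl|\det\tfrac{\partial(v_1,h_1,\ldots,v_d,h_d)}{\partial(z,r,y_1,\ldots,y_d)}\Bigr|=2^{d}(d-1)!\,r^{d^2}\,\Delta_{d-1}(y_1,\ldots,y_d).
\]
Combined with $\Vol(\Sigma)=r^{d-1}\Delta_{d-1}$, $\prod_i(\kappa h_i)^{\kappa\beta}=r^{2\kappa d\beta}\prod_i(1-\kappa\|y_i\|^2)^{\kappa\beta}$, $\Sigma-z=\conv(ry_1,\ldots,ry_d)$, and the trivial $\int_{[0,1]^{d-1}}\dd z=1$, this yields an integrand of exactly the claimed shape; the characteristic factor $\Delta_{d-1}^{\nu+1}$ appears as one $\Delta_{d-1}$ from the Jacobian and $\Delta_{d-1}^{\nu}$ from the $\Vol^{\nu}$-weight.

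Finally, a polar evaluation of $\mu(\Pi^{\downarrow})$ (splitting first in $h$ and then in the spatial radial variable) gives a constant multiple of $r^{d+1+2\kappa\beta}$, which a routine $\Gamma$-function identity linking $c_{d,\beta}^{(\prime)}$ and $c_{d+1,\beta}^{(\prime)}$ reduces to the form $m_{d,\beta}^{(\prime)}\,r^{d+1+2\kappa\beta}$ of~\eqref{eq:Constant}. Taking $A=\cC'$ identifies $\lambda_{\beta,\nu}^{(\prime)}$ as the product of the one-dimensional Gamma integral $\int_0^{\infty}r^{2\kappa d\beta+d^2+\nu(d-1)}e^{-m_{d,\beta}^{(\prime)}r^{d+1+2\kappa\beta}}\dd r$ and the $(d,\nu+1,\kappa\beta)$-normalization integral of $\Delta_{d-1}^{\nu+1}\prod_i(1-\kappa\|y_i\|^2)^{\kappa\beta}$ over the admissible $y$-domain, the latter being the classical $\beta$- respectively $\beta'$-simplex normalizer available in closed form from~\cite{kabluchko_formula,beta_polytopes,KTT}; both factors are finite precisely under the stated ranges for $\beta$ and $\nu$, settling in particular the finiteness remark preceding the theorem. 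Dividing the prefactor $\gamma^d(c_{d,\beta}^{(\prime)})^d\,2^d(d-1)!/d!$ by $\lambda_{\beta,\nu}^{(\prime)}$ and simplifying via standard Gamma identities yields $\alpha_{d,\beta,\nu}^{(\prime)}$ of~\eqref{eq:Alpha}--\eqref{eq:AlphaPrime}. The main obstacle is the Jacobian computation—specifically recognising the $d\times d$ Schur complement as a multiple of the simplex-volume matrix—since this is what turns the Poisson integrand into a $\beta^{(\prime)}$-simplex density and thereby realises the announced bridge between typical $\beta^{(\prime)}$-Delaunay cells and $\beta^{(\prime)}$-polytopes; the $\Gamma$-function bookkeeping needed to reconcile the prefactor with the explicit $\alpha_{d,\beta,\nu}^{(\prime)}$ is routine but lengthy.
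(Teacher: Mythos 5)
Your proposal is correct and follows essentially the same route as the paper's proof: Mecke formula, the apex-plus-rescaled-vertices change of variables with Jacobian $2^d r^{d^2}(d-1)!\,\Delta_{d-1}(y_1,\ldots,y_d)$, the void-probability computation yielding $e^{-m_{d,\beta}^{(\prime)}r^{d+1+2\kappa\beta}}$, and normalization via the known $(\nu+1)$-st volume moments of $\beta^{(\prime)}$-simplices. The only detail treated more carefully in the paper is the exclusion, in the $\beta'$-case, of configurations whose apex has positive height (these contribute nothing since the corresponding downward paraboloid a.s.\ contains points of $\eta_\beta'$), which your substitution $r=\sqrt{\kappa K}>0$ handles implicitly.
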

\begin{remark}\label{rem:rep_typical}
In more probabilistic terms, the $\nu$-weighted typical cell of the $\beta$-Delaunay tessellation $\cD_\beta$ has the same distribution as the random simplex $\conv(RY_1,\ldots,RY_d)$, where
\begin{enumerate}
\item[(a)] $R$ is a random variable whose density is proportional to $r^{2d\beta+d^2+\nu(d-1)}e^{-m_{d,\beta} r^{d+1+2\beta}}$ on $(0,\infty)$;
\item[(b)] $(Y_1,\ldots,Y_d)$ are $d$ random points in the unit  ball $\BB^{d-1}$ whose joint density is proportional to
$$
\Delta_{d-1}(y_1,\ldots,y_d)^{\nu+1}   \prod\limits_{i=1}^d(1-\|y_i\|^2)^{\beta},
\qquad y_1\in \BB^{d-1},\ldots, y_d\in \BB^{d-1};
$$
\item[(c)] $R$ is independent of $(Y_1,\ldots,Y_d)$.
\end{enumerate}
In other words this means that the $\nu$-weighted typical cell of the $\beta$-Delaunay tessellation coincides in distribution with the randomly rescaled $(\nu+1)$-volume-weighted $\beta$-simplex $\conv(Y_1,\ldots,Y_d)$.

In the same way, the $\nu$-weighted typical cell of the $\beta^{\prime}$-Delaunay tessellation $\cD^{\prime}_\beta$ has the same distribution as the random simplex $\conv(RY_1,\ldots,RY_d)$, where
\begin{enumerate}
\item[(a$^\prime$)] $R$ is a random variable whose density is proportional to $r^{-2d\beta+d^2+\nu(d-1)}e^{-m^{\prime}_{d,\beta} r^{d+1-2\beta}}$ on $(0,\infty)$;
\item[(b$^\prime$)] $(Y_1,\ldots,Y_d)$ are $d$ random points in $\RR^{d-1}$ whose joint density is proportional to
$$
\Delta_{d-1}(y_1,\ldots,y_d)^{\nu+1}   \prod\limits_{i=1}^d(1+\|y_i\|^2)^{-\beta},
\qquad y_1\in \RR^{d-1},\ldots, y_d\in \RR^{d-1};
$$
\item[(c$^\prime$)] $R$ is independent of $(Y_1,\ldots,Y_d)$.
\end{enumerate}
Alternatively we say that the $\nu$-weighted typical cell of the $\beta'$-Delaunay tessellation coincides in distribution with the randomly rescaled $(\nu+1)$-volume-weighted $\beta'$-simplex $\conv(Y'_1,\ldots,Y'_d)$.

Exact formulas for the constants needed to normalize the density of $(Y_1,\ldots,Y_d)$ appearing in (b) and (b$^\prime$) will be given in~\eqref{eq:betamoments} and~\eqref{eq:betaprimemoments}.
\end{remark}

\begin{remark}\label{rem:rep_typical_beta_-1}
Let us point out that in the limiting case $\beta\to -1$ the beta distribution with density $c_{d-1,\beta}(1-\|x\|^2)^{\beta}{\bf 1}_{\BB^{d-1}}(x)$ weakly converges to the uniform distribution on the unit sphere $\SS^{d-2}$, denoted by $\sigma_{d-2}$. Thus, $\PP_{\beta,\nu}$ for fixed $\nu\ge-1$ and $\gamma>0$ weakly converges to a probability measure $\PP_{-1,\nu}$ with
\begin{align*}
\PP_{-1,\nu}(A)
&=
\alpha^{*}_{d,\nu}
\int_{(\SS^{d-2})^d} \sigma_{d-2}(\dd u_1)\ldots \sigma_{d-2}(\dd u_d) \int_{0}^{\infty}\dd r\,{\bf 1}_A(\conv(ru_1,\ldots,ru_d))
\\
&\hspace{4cm}\times r^{d^2-2d+\nu(d-1)}e^{-{2\gamma\kappa_{d-1}\over \omega_d} r^{d-1}} (\Delta_{d-1}(u_1,\ldots,u_d))^{\nu+1},
\end{align*}
where
\[
\alpha^{*}_{d,\nu}=(2\gamma \omega_d^{-1})^{d+\nu-1}{(d-1)(d-1)!^{\nu+1}\pi^{(\nu-1)(d-1)\over 2}\over 2^d \Gamma(d+\nu-1)}{\Gamma({(d+\nu-1)(d-1)\over 2})\over \Gamma({d(d+\nu-2)\over 2}+1)}{\Gamma({d+\nu\over 2})^d\over \Gamma({d+1\over 2})^{d+\nu-1}}\prod\limits_{i=1}^{d-1}{\Gamma({i\over 2})\over \Gamma({i+\nu+1\over 2})}.
\]
This coincides with the formula for the distribution of the $\nu$-weighted typical cell of Poisson-Delaunay tessellation in $\RR^{d-1}$ corresponding to the intensity $2\gamma\omega_d^{-1}$ of underlying Poisson point process, see \cite[Theorem 2.3]{GusakovaThaele} for general $\nu$ {and \cite{Mo89} or \cite[Proposition 4.3.1]{Mo94} for the case $\nu=0$.}
\end{remark}

\begin{remark}
	It should be mentioned that in case of the classical Poisson-Delaunay and Poisson-Voronoi tessellation many more results regarding the distribution of typical cells and faces are available. For example,  in \cite[Lemma 1]{MM95} a precise measure theoretical description of the Palm distribution of the original Poisson point process with respect to the vertex process of the  Poisson-Voronoi tessellation was given. It characterises the distribution of the  Poisson-Delaunay tessellation around its typical cell. A further generalization, providing an explicit description of the Palm distribution of the Poisson-Voronoi tessellation around a uniform random point on the typical $k$-face, was obtained in \cite{BG07}.
	
	In our situation it is also possible to characterise the Palm distribution of $\eta_{\beta}$ and $\eta'_{\beta}$ with respect to the vertex process of the $\beta$- and $\beta'$-Voronoi tessellation, respectively, in the spirit of \cite[Lemma 1]{MM95}. To formulate the corresponding result, let $G=\{g_w:w\in\RR^{d-1}\}$ be the group of translations acting on
	$\RR^{d-1}\times\RR$ by shifts in the spatial ($\RR^{d-1}$-coordinate), that is, $g_w(v,h):=(v+w,h)$ for $(v,h)\in\RR^{d-1}\times\RR$, $w\in\RR^{d-1}$. Next, we denote by
	$$
	\chi_{\beta}^{(')}:=\chi(\eta_{\beta}^{(')}) =\sum\limits_{v\in \cF_0(\cV^{(')}_{\beta})}\delta_v
	$$
	the point process of vertices of the $\beta^{(')}$-Voronoi tessellation $\cV^{(')}_{\beta}$.
	According to the properties of the Poisson point process $\eta^{(')}_{\beta}$ the pair $(\chi_{\beta}^{(')}, \eta^{(')}_{\beta})$ is jointly $G$-stationary. This allows us to define the Palm distribution $\PP_{\beta^{(')}}^{0,\chi}$ of $\eta^{(')}_{\beta}$ with respect to $\chi_{\beta}^{(')}$ as
	$$
	\PP_{\beta^{(')}}^{0,\chi}(A):={1\over \gamma_0}\EE \sum_{v\in\chi_{\beta}^{(')}} {\mathbf 1}(v \in [0,1]^{d-1}, g_{-v}\eta^{(')}_{\beta} \in A), \qquad\qquad A\in\cN(\RR^d),
	$$
	where $\gamma_0:=\gamma_0(\cV^{(')}_{\beta})$ is the intensity of the vertices of $\beta^{(')}$-Voronoi tessellation whose exact value can be derived from the results of Subsection 6.3, see \cite[Section 7.2]{Kallenberg2}.
	
	Given a point set $X$ let us denote by $\Pi^{\downarrow}(X)$ the shift of the standard downward paraboloid $\Pi_{(0,a)}^{\downarrow}$ along the height (time) coordinate, such that $\Pi_{(0,a)}^{\downarrow}$ does not contain any points from $X$ in its interior and that for any $b>a$ we have $\inter\Pi_{(0,b)}^{\downarrow}\cap X\neq \emptyset$. Now, if $\PP_{\beta^{(')}}$ denotes the distribution of the Poisson point process $\eta_{\beta}^{(')}$, then for any measurable function $u:\sfN(\RR^d)\to[0,\infty)$ we have that
	\begin{align*}
		\int_{\sfN(\RR^d)}u(\varphi)\PP_{\beta^{(')}}^{0,\chi}(\dint\varphi)=\int_{\sfN(\RR^d)}\int_{\sfN(\RR^d)}u\big((\varphi_1\cap \Pi^{\downarrow}(\varphi_1))\cup(\varphi_2\cap (\Pi^{\downarrow}(\varphi_1))^{c})\big)\,\PP_{\beta^{(')}}^{0,\chi}(\dint\varphi_1)\PP_{\beta^{(')}}(\dint\varphi_2).
	\end{align*}
	This identity explains how the $\beta^{(')}$-Delaunay tessellation looks like in a neighbourhood of the typical cell. Namely, it is generated by a Poisson point process which is restricted to the complement of the paraboloid defining the typical cell and which is unaffected by the shape of the typical cell. The proof of this formula is a straightforward modification of the proof of Lemma 1 in \cite{MM95} and we do not present the details here in order to keep our presentation short.
\end{remark}

\begin{remark}\label{rem:BetaPolytopes}
	There exists an interesting and very useful connection between the boundary of the convex hull generated by a Poisson point process inside the unit ball, and the boundary of the paraboloid hull process. It was shown in \cite{CSY13} that applying a particular scaling  transformation, the boundary of the convex hull can be mapped into a random surface, which  locally converges to the boundary of the paraboloid hull process, as the intensity of the underlying Poisson point process tends to infinity. This fact appeared to be very useful for investigation of the local properties of the convex hull of Poisson point processes.
	
	In our case, the Poisson point process with intensity measure having density
	$$
	f_{\gamma}(x)=\gamma\,c_{d,\beta}(1-\|x\|^2)^{\beta}{\bf 1}_{\BB^d}(x)
	$$
	generates a convex hull, whose boundary will converge after a suitable transformation locally, as $\gamma\to\infty$, to the boundary of paraboloid hull process $\Phi(\eta_{\beta})$. This is an evidence that there exists a way to build a bridge between $\beta$-polytopes and $\beta$-Delaunay tessellations. In particular, the $\beta$-Delaunay tessellation describes the local limit of $\beta$-polytopes as the number of generating points tends to infinity.
\end{remark}

\begin{proof}[Proof of Theorem~\ref{theo:typical_cell_stoch_rep}]
Let us recall that for any collection of points
$$
x_1:=(v_1,h_1)\in\RR^{d-1}\times\RR\;\; \ldots \;\; x_d:=(v_d,h_d)\in\RR^{d-1}\times\RR
$$
with affinely independent spatial coordinates $v_i$ we denote by $\Pi^{\downarrow}(x_1,\ldots,x_d)$ the unique  translation of the standard downward paraboloid $\Pi^\downarrow$ containing $x_1,\ldots,x_d$ on its boundary. If $x_1,\ldots,x_d$ are distinct points of the Poisson point process $\eta_\beta^{(')}$, then the simplex $K := \conv (v_1,\ldots,v_d)$ belongs to the tessellation $\cD_\beta^{(')}$ if and only if $\inter(\Pi^{\downarrow}(x_1,\ldots,x_d))\cap\eta_\beta^{(')}=\varnothing$, that is if there are no points of $\eta_\beta^{(')}$ strictly inside $\Pi^{\downarrow}(x_1,\ldots,x_d)$. Let us denote the apex of the paraboloid $\Pi^{\downarrow}(x_1,\ldots,x_d)$  by $(w,t)\in \RR^{d-1} \times \RR$. We then have
$$
t - \|v_i-w\|^2 = h_i,\qquad i \in \{1,\ldots,d\}.
$$
As the center of the simplex $\conv(v_1,\ldots,v_d)$ we choose the point $w$ and therefore put
$$
z(x_1,\ldots,x_d):=z(\conv (v_1,\ldots,v_d), \cD(\eta_\beta^{(')}))=w.
$$

We are now ready to begin with the essential part of the proof of Theorem~\ref{theo:typical_cell_stoch_rep}. Fix some Borel set $A\subset \cC'$. From \eqref{eq_2} and the definition of the the generalized centre function for the tessellation $\cD_\beta^{(')}$ we get
\begin{align*}
S_{\eta_\beta^{(')}}(A)
&
:=
\lefteqn{\EE\sum_{(v,M)\in \mu_{\eta_\beta^{(')}}}{\bf 1}_A(M)\,{\bf 1}_{[0,1]^{d-1}}(v)\,(\Vol(M))^{\nu}}\\
&=
{1\over d!}\,
\EE\sum\limits_{(x_1,\ldots,x_d)\in (\eta_\beta^{(')})_{\neq}^d}{\bf 1}_A(\conv(v_1,\ldots,v_d)-z(x_1,\ldots,x_d))\\
&\qquad\times {\bf 1}_{[0,1]^{d-1}}(z(x_1,\ldots,x_d)){\bf 1}\left\{\inter (\Pi^{\downarrow}(x_1,\ldots,x_d))\cap\eta_\beta^{(')}=\varnothing\right\}\\
&\qquad\times \Delta_{d-1}(v_1,\ldots,v_d)^{\nu}.
\end{align*}
Here, $(\eta_\beta^{(')})_{\neq}^d$ denotes the collection of all tuples of the form $(x_1,\ldots,x_d)$ consisting of pairwise distinct points $x_1,\ldots,x_d$  of the Poisson point process $\eta_\beta^{(')}$. We write $S_{\beta}(A):=S_{\eta_{\beta}}(A)$ and $S_{\beta}^{\prime}(A):=S_{\eta_{\beta}^{\prime}}(A)$. Note that $S_{\beta}^{(\prime)}(A)$ is in fact the same as $\lambda_{\beta,\nu}^{(\prime)}\,\PP_{\beta,\nu}^{(\prime)}(A)$, but since at the present moment we don't know whether the normalizing constants $\lambda_{\beta,\nu}$ and $\lambda_{\beta,\nu}^{\prime}$, given by~\eqref{eq:def_gamma_const}, are finite, we prefer to use the notation $S_{\beta}^{(\prime)}(A)$.
Applying the multivariate Mecke formula \cite[Corollary 3.2.3]{SW} to the Poisson point process $\eta_{\beta}^{(\prime)}$ and taking into account \eqref{eq:BetaPoissonIntensity} and \eqref{eq:BetaPrimePoissonIntensity} we obtain
\begin{align}
S_{\beta}^{(\prime)}(A)
&=
{\gamma^d (c^{(\prime)}_{d,\beta})^d\over d!}
\int_{\RR^{d-1}} \dd v_1 \, \ldots  \, \int_{\RR^{d-1}} \dd v_d\,
\int_{0}^{\infty}\dd h_1\, \ldots \, \int_{0}^{\infty} \dd h_d \,
\notag\\
&\qquad\phantom{\times} {\bf 1}_A(\conv(v_1,\ldots,v_d)-z(\tilde x_1,\ldots,\tilde x_d))\notag\\
&\qquad\times {\bf 1}_{[0,1]^{d-1}}(z(\tilde x_1,\ldots,\tilde x_d))\PP\left(\inter (\Pi^{\downarrow}(\tilde x_1,\ldots,\tilde x_d))\cap \eta_{\beta}^{(\prime)}=\varnothing\right)\notag\\
&\qquad\times h_1^{\kappa\beta}\ldots h_d^{\kappa\beta}\,\Delta_{d-1}(v_1,\ldots,v_d)^{\nu}, \label{eq_4}
\end{align}
where $\tilde x_i = (v_i, \kappa h_i)$ for $i=1,\ldots,d$. In the above integral, we are going to pass from the integration over the variables $(v_1,\ldots,v_d,h_1,\ldots,h_d)\in (\RR^{d-1})^d \times(\RR_{+}^*)^d$, where $\RR_+^*=(0,\infty)$, to the integration over certain new variables $(w,r,y_1,\ldots,y_d)\in \RR^{d-1}\times\RR_{+}^*\times\left(\RR^{d-1}\right)^d$ introduced as follows.
Take some tuple $(v_1,\ldots,v_d,h_1,\ldots,h_d)\in (\RR^{d-1})^d \times(\RR_{+}^*)^d$ and assume that $v_1,\ldots,v_d$ are affinely independent.  Denote the apex of the unique downward paraboloid $\Pi^\downarrow(\tilde x_1,\ldots,\tilde x_d)$ whose boundary passes through the points $(v_1,\kappa h_1),\ldots,(v_d,\kappa h_d)$ by $(w,\kappa r^2)\in \RR^{d-1} \times \RR$ and note that  $w= z(\tilde x_1,\ldots,\tilde x_d)$. Observe that in the $\beta'$-case the second coordinate of the apex can be positive, but since such downward paraboloid contains infinitely many points of the Poisson point process $\eta_\beta^{\prime}$ (because any point $(w,0)$ with vanishing second coordinate is an accumulation point for the atoms of $\eta_{\beta}^{\prime}$), we can ignore this possibility in the following.
We can write $v_i = w + ry_i$ with some uniquely defined and pairwise distinct $y_1,\ldots,y_d\in \RR^{d-1}$. The condition that the boundary of the paraboloid passes through the point $(v_i,\kappa h_i)$ reads as $\kappa r^2 - \|v_i-w\|^2 = \kappa h_i$, or $h_i = r^2 (1-\kappa\|y_i\|^2)$. In the $\beta$-case it follows that $y_1,\ldots,y_d \in \BB^{d-1}$.
Let us therefore introduce the transformation $T:\RR^{d-1}\times\RR_{+}^*\times\left(\BB^{d-1}\right)^d\rightarrow\left(\RR^{d-1}\times\RR_{+}^*\right)^d$ (in the $\beta$-case) or $T:\RR^{d-1}\times\RR_{+}^*\times\left(\RR^{d-1}\right)^d\rightarrow\left(\RR^{d-1}\times\RR_{+}^*\right)^d$  (in the $\beta'$-case) defined as
$$
(w,r,y_1,\ldots,y_d)\mapsto \left(ry_1+w,r^2(1-\kappa\|y_1\|^2),\ldots,ry_d+w, r^2(1-\kappa\|y_d\|^2)\right) = (v_1,h_1,\ldots,v_d,h_d).
$$
This transformation is bijective (up to sets of Lebesgue measure zero and provided that in the $\beta$'-case we agree to exclude from the image set all combinations $(v_1,h_1,\ldots,v_d,h_d)$ which lead to a paraboloid whose apex has positive height).  The absolute value of the Jacobian of $T$ is the absolute value of the determinant of the block matrix
$$
J(T):=\left|
\begin{matrix}
E_{d-1} & y_1 & rE_{d-1} & 0 & \dots & 0\\
0 & 2r(1-\kappa\|y_1\|^2) & -2r^2\kappa y_1^{\top} & 0 &\dots & 0\\
E_{d-1} & y_2 & 0 & rE_{d-1} & \dots & 0\\
0 & 2r(1-\kappa\|y_2\|^2) & 0 & -2r^2\kappa y_2^{\top} & \dots & 0\\
\vdots & \vdots &  \vdots & \vdots &\ddots & \vdots \\
E_{d-1} & y_d & 0 & 0 & \dots & rE_{d-1}\\
0 & 2r(1-\kappa\|y_d\|^2) & 0 & 0 & \dots & -2r^2\kappa y_d^{\top}\\
\end{matrix}
\right|,
$$
where $E_{k}$ is the $k\times k$ unit matrix, vectors $y_1,\ldots,y_d$ are considered to be column vectors and $|M|$ stands for the absolute value of the determinant of the matrix $M$. We can compute $J(T)$ as follows:
\begin{align*}
{J(T)\over2^dr^{d^2}} &=\left|
\begin{matrix}
E_{d-1} & y_1 & E_{d-1} & 0 & \dots & 0\\
0 & 1-\kappa\|y_1\|^2 & -\kappa y_1^{\top} & 0 &\dots & 0\\
E_{d-1} & y_2 & 0 & E_{d-1} & \dots & 0\\
0 & 1-\kappa\|y_2\|^2 & 0 & -\kappa y_2^{\top} & \dots & 0\\
\vdots & \vdots & \vdots & \vdots &\ddots & \vdots \\
E_{d-1} & y_d & 0 & 0 & \dots & E_{d-1}\\
0 & 1-\kappa\|y_d\|^2 & 0 & 0 & \dots & -\kappa y_d^{\top}\\
\end{matrix}
\right|=\left|
\begin{matrix}
E_{d-1} & 0 & E_{d-1} & 0 & \dots & 0\\
0 & 1 & -\kappa y_1^{\top} & 0 &\dots & 0\\
E_{d-1} & 0 & 0 & E_{d-1} & \dots & 0\\
0 & 1 & 0 & -\kappa y_2^{\top} & \dots & 0\\
\vdots & \vdots & \vdots & \vdots & \ddots & \vdots \\
E_{d-1} & 0 & 0 & 0 & \dots & E_{d-1}\\
0 & 1 & 0 & 0 & \dots & -\kappa y_d^{\top}\\
\end{matrix}\right|\\
&=\left|
\begin{matrix}
0 & 0 & E_{d-1} & 0 & \dots & 0\\
\kappa y_1^{\top} & 1 & -\kappa y_1^{\top} & 0 &\dots & 0\\
0 & 0 & 0 & E_{d-1} & \dots & 0\\
\kappa y_2^{\top} & 1 & 0 & -\kappa y_2^{\top} & \dots & 0\\
\vdots & \vdots & \vdots & \vdots & \ddots & \vdots \\
0 & 0 & 0 & 0 & \dots & E_{d-1}\\
\kappa y_d^{\top} & 1 & 0 & 0 & \dots & -\kappa y_d^{\top}\\
\end{matrix}\right|
=|\kappa^d|\,\left|
\begin{matrix}
0 & E_{d(d-1)}\\
\begin{matrix}
y_1^{\top} & 1 \\
\vdots & \vdots\\
y_d^{\top} & 1\\
\end{matrix} & \mbox{\normalfont\Large\bfseries 0} \\
\end{matrix}\right|=\left|
\begin{matrix}
1 & \ldots & 1\\
y_1 &\ldots & y_d \\
\end{matrix}\right|
\end{align*}
Thus, $J(T)=2^dr^{d^2}(d-1)! \Delta_{d-1}(y_1,\ldots,y_d)$.
Applying the transformation $T$ in \eqref{eq_4}  we derive
\begin{align}
S_{\beta}^{(\prime)}(A)
&=
{(2\gamma\,c^{(\prime)}_{d,\beta})^d\over d}
\int_{\RR^{d-1}}\dd y_1\, \ldots\, \int_{\RR^{d-1}}\dd y_d\, \int_{\RR^{d-1}} \dd w\, \int_{0}^{\infty}\dd r\,{\bf 1}_A(\conv(ry_1,\ldots,ry_d)) {\bf 1}_{[0,1]^{d-1}}(w)\notag\\
&\qquad\times\PP\left(\{(v,\kappa h)\in\RR^{d-1}\times \kappa \RR_{+}^*\colon \kappa h<-\|v-w\|^2+\kappa r^2\}\cap \eta_{\beta}^{(\prime)}
=\varnothing\right)\,r^{2\kappa d\beta+d^2+\nu(d-1)}\notag\\
&\qquad\times \Delta_{d-1}(y_1,\ldots,y_d)^{\nu+1}\prod_{i=1}^d(1-\kappa\|y_i\|^2)^{\kappa\beta}{\bf 1}(1-\kappa\|y_i\|^2\ge 0).\label{eq_5}
\end{align}
Using now the stationarity of the Poisson point processes $\eta_\beta$ and $\eta_\beta^{\prime}$ with respect to the $v$-coordinate we  conclude that, for any $w\in\RR^{d-1}$,
\begin{align*}
P^{(\prime)} &:=\PP\left(\{(v,\kappa h)\in\RR^{d-1}\times \kappa \RR_{+}^*\colon \kappa h<-\|v-w\|^2+\kappa r^2\}\cap \eta_{\beta}^{(\prime)}=\varnothing\right)\\
&=\PP\left(\{(v,\kappa h)\in\RR^{d-1}\times \kappa \RR_{+}^*\colon \kappa h<-\|v\|^2+\kappa r^2\}\cap \eta_{\beta}^{(\prime)}=\varnothing\right)\\
&=\exp\Big(-\gamma c^{(\prime)}_{d,\beta}\int_{0}^{\infty}\int_{\RR^{d-1}}{\bf 1}(\kappa h<-\|v\|^2+\kappa r^2)h^{\kappa \beta}\dd v\,\dd h\Big).
\end{align*}

For the further computations we need to distinguish the $\beta$- and the $\beta^{\prime}$-cases. For the $\beta$-model we have
\begin{align*}
P&:=\exp\Big(-\gamma c_{d,\beta}\int_{0}^{r^2}\int_{\{v\colon \|v\|\leq (r^2-h)^{1/2}\}}h^{\beta}\dd v\,\dd h\Big)\\
&=\exp\Big(-\gamma \kappa_{d-1}c_{d,\beta}\int_{0}^{r^2}(r^2-h)^{{d-1\over 2}}h^{\beta}\dd h\Big)\\
&=\exp\Big(-\gamma \kappa_{d-1}c_{d,\beta}r^{d+1+2\beta}\int_{0}^{1}(1-h')^{{d-1\over 2}}h'^{\beta}\dd h'\Big)\\
&=\exp\left(-m_{d,\beta} r^{d+1+2\beta}\right),
\end{align*}
where $m_{d,\beta}={\gamma c_{d,\beta}\over \pi c_{d+1,\beta}}$. For the $\beta^{\prime}$-model we obtain
\begin{align*}
P^{\prime}&:=\exp\Big(-\gamma c^{\prime}_{d,\beta}\int_{r^2}^{\infty}\int_{\{v\colon \|v\|\leq (h-r^2)^{1/2}\}}h^{-\beta}\dd v\,\dd h\Big)\\
&=\exp\Big(-\gamma \kappa_{d-1}c^{\prime}_{d,\beta}\int_{r^2}^{\infty}(h-r^2)^{{d-1\over 2}}h^{-\beta}\dd h\Big)\\
&=\exp\Big(-\gamma \kappa_{d-1}c^{\prime}_{d,\beta}r^{d+1-2\beta}\int_{0}^{1}(1-h')^{{d-1\over 2}}h'^{\beta-(d+1)/2-1}\dd h'\Big)\\
&=\exp\left(-m^{\prime}_{d,\beta} r^{d+1-2\beta}\right),
\end{align*}
with $m^{\prime}_{d,\beta}={\gamma c^{\prime}_{d,\beta}\over \pi c^{\prime}_{d+1,\beta}}$. Substituting this into \eqref{eq_5} leads to
\begin{multline}\label{eq_7}
S_{\beta}^{(\prime)}(A)
={(2\gamma\,c^{(\prime)}_{d,\beta})^d\over d}\int_{\RR^{d-1}}\dd y_1\, \ldots\, \int_{\RR^{d-1}} \dd y_d\, \int_{0}^{\infty} \dd r\,{\bf 1}_A(\conv(ry_1,\ldots,ry_d))
\\
\qquad \times r^{2\kappa d\beta+d^2+\nu(d-1)}e^{-m^{(\prime)}_{d,\beta} r^{d+1+2\kappa \beta}}
\Delta_{d-1}(y_1,\ldots,y_d)^{\nu+1}\prod_{i=1}^d(1-\kappa\|y_i\|^2)^{\kappa\beta}{\bf 1}(1-\kappa\|y_i\|^2\ge 0).
\end{multline}
We are now in position to determine the  normalizing constants $\lambda_{\beta,\nu}$ and $\lambda_{\beta,\nu}^{\prime}$ from~\eqref{eq:def_gamma_const}. To this end, we plug $A=\cC'$ (the set of non-empty compact subsets of $\RR^{d-1}$) into the expression~\eqref{eq_7} for $S_\beta^{(\prime)}$. Doing this and using the substitution $s=m^{(\prime)}_{d,\beta} r^{d+1+2\kappa \beta}$, we obtain
\begin{align*}
\lambda_{\beta,\nu}^{(\prime)}
&=
S_{\beta}^{(\prime)}(\cC')\\
&=
{(2\gamma\,c^{(\prime)}_{d,\beta})^d\over d}\int_{(\RR^{d-1})^d}\int_{0}^{\infty} r^{2\kappa d\beta+d^2+\nu(d-1)}e^{-m^{(\prime)}_{d,\beta} r^{d+1+2\kappa \beta}}\\
&\qquad\times \Delta_{d-1}(y_1,\ldots,y_d)^{\nu+1}\prod_{i=1}^d(1-\kappa\|y_i\|^2)^{\kappa\beta}{\bf 1}(1-\kappa\|y_i\|^2\ge 0)\dd r\,\dd y_1\ldots \dd y_d\\
&={(2\gamma\,c^{(\prime)}_{d,\beta})^d\over d(d+1+2\kappa \beta)}(m^{(\prime)}_{d,\beta})^{-d-{(\nu-1)(d-1)\over d+2\kappa \beta+1}}\int_{0}^{\infty} s^{d+{(\nu-1)(d-1)\over d+2\kappa\beta+1}-1}e^{-s}\dd s\\
&\qquad\times\int_{(\RR^{d-1})^d}\Delta_{d-1}(y_1,\ldots,y_d)^{\nu+1}\prod_{i=1}^d(1-\kappa\|y_i\|^2)^{\kappa\beta}{\bf 1}(1-\kappa\|y_i\|^2\ge 0)\dd y_1\ldots \dd y_d\\
&={(2\gamma\,c^{(\prime)}_{d,\beta})^d\Gamma(d+{(\nu-1)(d-1)\over d+2\kappa\beta+1})\over d(d+1+2\kappa\beta)}(m^{(\prime)}_{d,\beta})^{-d-{(\nu-1)(d-1)\over d+2\kappa \beta+1}}\\
&\qquad\times\int_{(\RR^{d-1})^d}\Delta_{d-1}(y_1,\ldots,y_d)^{\nu+1}\prod_{i=1}^d(1-\kappa\|y_i\|^2)^{\kappa\beta}{\bf 1}(1-\kappa\|y_i\|^2\ge 0)\dd y_1\ldots \dd y_d.
\end{align*}

The last integral (which is finite for $\nu\ge-1$) is equal -- up to a constant -- to the $(\nu+1)$-th moment of the volume of random simplex with vertices having a $\beta$- or $\beta^{\prime}$-distribution. The exact values were calculated in \cite[Theorem 2.3]{GKT17} or \cite[Proposition 2.8]{KTT} and are given (in the cases $\kappa=+1$ and $\kappa=-1$) as follows:
\begin{align}
\int_{(\BB^{d-1})^d}&\Delta_{d-1}(y_1,\ldots,y_d)^{\nu+1}\prod\limits_{i=1}^d(1-\|y_i\|^2)^{\beta}\dd y_1\ldots \dd y_d \notag\\
&={1\over (d-1)!^{\nu+1}c_{d-1,\beta}^d}{\Gamma({d+1\over 2}+\beta)^d\over \Gamma({d+\nu\over 2}+\beta+1)^d}{\Gamma({d(d+\nu+2\beta)\over 2} +1)\over \Gamma({d(d+\nu+2\beta)-\nu +1 \over 2})}
\prod\limits_{i=1}^{d-1} {\Gamma({i+\nu+1\over 2})\over \Gamma({i\over 2})},
\label{eq:betamoments}\\
\int_{(\RR^{d-1})^d}&\Delta_{d-1}(y_1,\ldots,y_d)^{\nu+1}\prod\limits_{i=1}^d(1+\|y_i\|^2)^{-\beta}\dd y_1\ldots \dd y_d \notag\\
&={1\over (d-1)!^{\nu+1}(c^{\prime}_{d-1,\beta})^d}{\Gamma({d(2\beta-d-\nu)+\nu+1\over 2})\over\Gamma({d(2\beta-d-\nu)\over 2})}{\Gamma(\beta-{d+\nu\over 2})^d\over\Gamma(\beta-{d-1\over 2})^d}\prod_{i=1}^{d-1}{\Gamma({i+\nu+1\over 2})\over\Gamma({i\over 2})}.\label{eq:betaprimemoments}
\end{align}
We have thus shown that
\begin{align*}
\lambda_{\beta,\nu}&={\Gamma(d+{(\nu-1)(d-1)\over d+2\beta+1})m_{d,\beta}^{-d-{(\nu-1)(d-1)\over d+2 \beta+1}}\over d(d+1+2\beta)(d-1)!^{\nu+1}}\Big({2\gamma\,c_{d,\beta}\Gamma({d+1\over 2}+\beta)\over c_{d-1,\beta}\Gamma({d+\nu\over 2}+\beta+1)}\Big)^d{\Gamma({d(d+\nu+2\beta)\over 2} +1)\over \Gamma({d(d+\nu+2\beta)-\nu +1 \over 2})}\prod\limits_{i=1}^{d-1}{\Gamma({i+\nu+1\over 2})\over \Gamma({i\over 2})},\\
\lambda^{\prime}_{\beta,\nu}&={\Gamma(d+{(\nu-1)(d-1)\over d-2\beta+1})(m^{\prime}_{d,\beta})^{-d-{(\nu-1)(d-1)\over d-2 \beta+1}}\over d(d+1-2\beta)(d-1)!^{\nu+1}}\Big({2\gamma\,c^{\prime}_{d,\beta}\Gamma(\beta-{d+\nu\over 2})\over c^{\prime}_{d-1,\beta}\Gamma(\beta-{d-1\over 2})}\Big)^d{\Gamma({d(2\beta-d-\nu)+\nu+1\over 2})\over\Gamma({d(2\beta-d\nu\over 2})}
\prod\limits_{i=1}^{d-1}{\Gamma({i+\nu+1\over 2})\over \Gamma({i\over 2})}.
\end{align*}
In particular, this implies that $\lambda_{\beta,\nu}, \lambda^{\prime}_{\beta,\nu}<\infty$ provided $\nu\ge-1$.
From \eqref{eq_7} we conclude
\begin{align*}
\PP^{(\prime)}_{\beta,\nu}(A)
=
\frac{S_{\beta}^{(\prime)}}{\lambda_{\beta,\nu}^{(\prime)}}
&=
\alpha^{(\prime)}_{d,\beta,\nu}\int_{(\RR^{d-1})^d}\dd y_1\ldots \dd y_d \, \int_{0}^{\infty}\dd r\,{\bf 1}_A(\conv(ry_1,\ldots,ry_d)) r^{2\kappa d\beta+d^2+\nu(d-1)}\notag\\
&\qquad\times e^{-m^{(\prime)}_{d,\beta} r^{d+1+2\kappa\beta}}
\Delta_{d-1}(y_1,\ldots,y_d)^{\nu+1}\prod_{i=1}^d(1-\kappa\|y_i\|^2)^{\kappa\beta}{\bf 1}(1-\kappa\|y_i\|^2\ge 0),
\end{align*}
with $\alpha_{d,\beta,\nu}$ and $\alpha^{\prime}_{d,\beta,\nu}$ given by \eqref{eq:Alpha} and \eqref{eq:AlphaPrime} respectively.
This completes the argument.
\end{proof}

\section{The volume of weighted typical cells}\label{sec:Volume}

\subsection{Moment formulas}

In this section we apply Theorem \ref{theo:typical_cell_stoch_rep} to compute all moments of the random variables $\Vol(Z_{\beta,\nu})$ and $\Vol(Z^{\prime}_{\beta,\nu})$. These explicit formulas will be the basis of some of the results in part III of this series of papers. In particular, they generalize the moment formulas in \cite{GusakovaThaele} for weighted typical cells in classical Poisson-Delaunay tessellations.

\begin{theorem}\label{theo:volume}
Let $Z_{\beta,\nu}$ be the $\nu$-weighted typical cell of a $\beta$-Delaunay tessellation with $\beta\geq -1$ and $\nu\ge-1$, and let $Z^{\prime}_{\beta,\nu}$ be the $\nu$-weighted typical cell of the $\beta^{\prime}$-Delaunay tessellation with $\beta\geq (d+1)/2$ and $2\beta - d>\nu\ge-1$. Then, for any $s>-\nu-1$, we have
\begin{align*}
\EE \Vol(Z_{\beta,\nu})^s &= {1\over (d-1)!^s}\Big({ \sqrt{\pi}\Gamma({d+1\over 2}+\beta+1)\over \gamma\Gamma({d\over 2}+\beta+1)}\Big)^{{s(d-1)\over d+2\beta+1}}{\Gamma({d(d+2\beta)+\nu(d-1)+1\over 2})\over\Gamma({d(d+2\beta)+(\nu+s)(d-1)+1\over 2})}{\Gamma({d(d+\nu+s +2\beta)\over 2}+1)\over\Gamma({d(d+\nu +2\beta)\over 2}+1)}\\
&\qquad\times{\Gamma(d+{(\nu+s-1)(d-1)\over d+2\beta+1})\over\Gamma(d+{(\nu-1)(d-1)\over d+2\beta+1})}{\Gamma({d+\nu\over 2}+\beta +1)^d\over\Gamma({d+\nu+s\over 2}+\beta +1)^d}\prod\limits_{i=1}^{d-1}{\Gamma({i+\nu+s+1\over 2})\over \Gamma({i+\nu+1\over 2})},
\end{align*}
and for any $2\beta -d-\nu>s>-\nu-1$ we obtain
\begin{align*}
\EE \Vol(Z^{\prime}_{\beta,\nu})^s &= {1\over (d-1)!^s}\Big({ \sqrt{\pi}\Gamma(\beta -{d\over 2})\over \gamma\Gamma(\beta-{d+1\over 2})}\Big)^{{s(d-1)\over d-2\beta+1}}{\Gamma({d(2\beta-d)-(\nu+s)(d-1)+1\over 2})\over\Gamma({d(2\beta-d)-\nu(d-1)+1\over 2})}{\Gamma({d(2\beta-d-\nu)\over 2})\over\Gamma({d(2\beta-d-\nu-s)\over 2})}\\
&\qquad\times{\Gamma(d+{(\nu+s-1)(d-1)\over d-2\beta+1})\over\Gamma(d+{(\nu-1)(d-1)\over d-2\beta+1})}{\Gamma(\beta -{d+\nu+s\over 2})^d\over\Gamma(\beta -{d+\nu\over 2})^d}\prod\limits_{i=1}^{d-1}{\Gamma({i+\nu+s+1\over 2})\over \Gamma({i+\nu+1\over 2})}.
\end{align*}
\end{theorem}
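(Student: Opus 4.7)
The plan is to read off the moments directly from the stochastic representation of the weighted typical cell given in Remark~\ref{rem:rep_typical}, which writes $Z_{\beta,\nu}$ in distribution as $\conv(RY_1,\ldots,RY_d)$ with $R$ and $(Y_1,\ldots,Y_d)$ independent. Since $\Vol(\conv(ry_1,\ldots,ry_d))=r^{d-1}\Delta_{d-1}(y_1,\ldots,y_d)$, we factor
$$
\EE\Vol(Z_{\beta,\nu})^s \;=\; \EE R^{s(d-1)}\cdot \EE\,\Delta_{d-1}(Y_1,\ldots,Y_d)^s
$$
and compute each factor separately; the prefactor $1/(d-1)!^s$ in the claim is simply the constant that arises from this decomposition once we insist that the $\Delta$-factor be normalized as a moment rather than as an integral.

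For the $R$-factor, the density of $R$ is proportional to $r^{2d\beta+d^2+\nu(d-1)}e^{-m_{d,\beta}r^{d+1+2\beta}}$, so the substitution $u=m_{d,\beta}r^{d+1+2\beta}$ reduces every moment of $R$ to a Gamma integral. Using the identity $d^2+2d\beta+\nu(d-1)+1=d(d+1+2\beta)+(\nu-1)(d-1)$ and taking the ratio of the $s(d-1)$-th raw moment integral to the normalizing integral yields
$$
\EE R^{s(d-1)} \;=\; m_{d,\beta}^{-\,s(d-1)/(d+2\beta+1)}\;\frac{\Gamma\!\left(d+\tfrac{(\nu+s-1)(d-1)}{d+2\beta+1}\right)}{\Gamma\!\left(d+\tfrac{(\nu-1)(d-1)}{d+2\beta+1}\right)}.
$$
Substituting the explicit value of $m_{d,\beta}$ from \eqref{eq:Constant} (and using $c_{d,\beta}/c_{d+1,\beta}=\sqrt{\pi}\,\Gamma(d/2+\beta+1)/\Gamma((d+1)/2+\beta+1)$) produces the second factor in the stated formula.

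For the $\Delta$-factor, the joint density of $(Y_1,\ldots,Y_d)$ is proportional to $\Delta_{d-1}(y_1,\ldots,y_d)^{\nu+1}\prod_{i=1}^d(1-\|y_i\|^2)^{\beta}$ on $(\BB^{d-1})^d$, so
$$
\EE\,\Delta_{d-1}(Y_1,\ldots,Y_d)^s \;=\; \frac{\int \Delta_{d-1}^{\nu+s+1}\prod_i(1-\|y_i\|^2)^{\beta}\,\dd y}{\int \Delta_{d-1}^{\nu+1}\prod_i(1-\|y_i\|^2)^{\beta}\,\dd y},
$$
and both integrals are evaluated in closed form by \eqref{eq:betamoments} (applied with exponent $\nu+s$ in the numerator and $\nu$ in the denominator). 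Taking the ratio, the factors $(d-1)!^{\nu+1}$, $c_{d-1,\beta}^d$, $\Gamma((d+1)/2+\beta)^d$ and $\prod \Gamma(i/2)$ cancel, leaving precisely the remaining Gamma-function products in the target expression. The assumption $s>-\nu-1$ is what guarantees integrability of $\Delta_{d-1}^{\nu+s+1}$ in the numerator.

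The $\beta'$-case proceeds identically, starting from the alternative stochastic representation in Remark~\ref{rem:rep_typical}(a$'$)--(c$'$): the Gamma integral for $\EE R^{s(d-1)}$ has the same structure with $\beta$ replaced by $-\beta$ in the exponents, and $\EE\,\Delta_{d-1}(Y_1,\ldots,Y_d)^s$ is computed via~\eqref{eq:betaprimemoments} with $\nu$ replaced by $\nu+s$ in the numerator. The additional upper bound $s<2\beta-d-\nu$ is exactly what is needed to keep the Gamma functions $\Gamma(\beta-(d+\nu+s)/2)$ and $\Gamma(d(2\beta-d-\nu-s)/2)$ (which govern integrability of the numerator moment) finite. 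The main obstacle in the whole argument is purely bookkeeping: the target expression is a product of roughly seven Gamma-function ratios, and one must patiently collect the factors produced by the four ingredients (numerator and denominator of each of $\EE R^{s(d-1)}$ and $\EE\,\Delta_{d-1}^s$) and match them against the stated form. No ideas beyond Theorem~\ref{theo:typical_cell_stoch_rep} and the beta/beta$'$-moment formulas~\eqref{eq:betamoments}--\eqref{eq:betaprimemoments} are required.
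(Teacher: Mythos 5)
Your proposal is correct and follows essentially the same route as the paper: the paper's proof likewise starts from Theorem~\ref{theo:typical_cell_stoch_rep}, separates the radial and angular integrations by Fubini (which is exactly your independence factorization from Remark~\ref{rem:rep_typical}), evaluates the $r$-integral as a Gamma integral via the substitution $s=m^{(\prime)}_{d,\beta}r^{d+1+2\kappa\beta}$, and invokes \eqref{eq:betamoments} and \eqref{eq:betaprimemoments} with exponent $\nu+1+s$ for the spatial part. Your organisation of the bookkeeping as ratios of moments (so that the normalizing constants $\alpha^{(\prime)}_{d,\beta,\nu}$ never need to be written out) is a harmless cosmetic variant of the same computation.
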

\begin{proof}
Applying Theorem \ref{theo:typical_cell_stoch_rep} we get
\begin{align*}
\EE \Vol(Z^{(\prime)}_{\beta,\nu})^s &= \alpha_{d,\beta,\nu}^{(\prime)}\int_{(\RR^{d-1})^d}\dd y_1\ldots \dd y_d \, \int_{0}^{\infty}\dd r\Vol(\conv(ry_1,\ldots,ry_d))^s r^{2\kappa d\beta+d^2+\nu(d-1)}\notag\\
&\qquad\times e^{-m^{(\prime)}_{d,\beta} r^{d+1+2\kappa\beta}}
\Delta_{d-1}(y_1,\ldots,y_d)^{\nu+1}\prod_{i=1}^d(1-\kappa\|y_i\|^2)^{\kappa\beta}{\bf 1}(1-\kappa\|y_i\|^2\ge 0).
\end{align*}
Then from Fubini's theorem we obtain
\begin{align*}
\EE \Vol(Z^{(\prime)}_{\beta,\nu})^s &= \alpha_{d,\beta,\nu}^{(\prime)}\int_{0}^{\infty}r^{2\kappa d\beta+d^2+\nu(d-1)+s(d-1)}e^{-m^{(\prime)}_{d,\beta} r^{d+1+2\kappa\beta}}\,\dd r\\
&\qquad\times \int_{(\RR^{d-1})^d}\Delta_{d-1}(y_1,\ldots,y_d)^{\nu+1+s}\prod_{i=1}^d(1-\kappa\|y_i\|^2)^{\kappa\beta}{\bf 1}(1-\kappa\|y_i\|^2\ge 0)\,\dd y_1\ldots \dd y_d\\
&=\alpha_{d,\beta,\nu}^{(\prime)}{\Gamma(d+{(\nu-1)(d-1)\over d+2\kappa\beta+1}+{s(d-1)\over d+2\kappa\beta+1})\over (d+1+2\kappa\beta)}(m_{d,\beta}^{(\prime)})^{-d-{(\nu-1)(d-1)\over d+2\kappa\beta+1}-{s(d-1)\over d+2\kappa\beta+1}}\\
&\qquad\times \int_{(\RR^{d-1})^d}\Delta_{d-1}(y_1,\ldots,y_d)^{\nu+1+s}\prod_{i=1}^d(1-\kappa\|y_i\|^2)^{\kappa\beta}{\bf 1}(1-\kappa\|y_i\|^2\ge 0)\,\dd y_1\ldots \dd y_d.
\end{align*}
Finally, using \eqref{eq:betamoments}, \eqref{eq:betaprimemoments} and definition of constants $\alpha_{\beta,d,\nu}$, $\alpha^{\prime}_{\beta,d,\nu}$, $m_{d,\beta}$ and $m^{\prime}_{d,\beta}$ we complete the proof.
\end{proof}

\subsection{Probabilistic representations}

Based on the formulas for the moments of the volume of the random simplices $Z_{\beta, \nu}$ and $Z_{\beta, \nu}'$ we can obtain probabilistic representations for the random variables $\Vol(Z_{\beta,\nu})^2$ and $\Vol(Z_{\beta,\nu}')^2$, which are similar in spirit to the ones for Gaussian or beta random simplices \cite{GKT17,GusakovaThaele}.

Let us first recall some standard distributions. A random variable has a Gamma distribution with shape $\alpha\in(0,\infty)$ and rate $\lambda \in(0,\infty)$ if its density function is given by
\[
g_{\alpha,\lambda}(t)={\lambda^{\alpha}\over\Gamma(\alpha)}t^{\alpha-1}e^{-\lambda t},\quad t\in(0,\infty).
\]
A random variable has a Beta distribution with shape parameters $\alpha_1, \alpha_2\in(0,\infty)$ if its density function is given by
\[
g_{\alpha_1,\alpha_2}(t)={\Gamma(\alpha_1+\alpha_2)\over\Gamma(\alpha_1)\Gamma(\alpha_2)}t^{\alpha_1-1}(1-t)^{\alpha_2 - 1},\quad t\in(0,1).
\]
A random variable has a Beta prime distribution with shape parameters $\alpha_1, \alpha_2\in(0,\infty)$ if its density function is given by
\[
g_{\alpha_1,\alpha_2}(t)={\Gamma(\alpha_1+\alpha_2)\over\Gamma(\alpha_1)\Gamma(\alpha_2)}t^{\alpha_1-1}(1+t)^{-\alpha_1-\alpha_2},\quad t>0.
\]

We will use the notation $\xi\sim \Gam(\alpha,\lambda)$, $\xi\sim \Bet(\alpha,\beta)$ and $\xi\sim \Bet^{\prime}(\alpha,\beta)$ to indicate that random variable $\xi$ has a Gamma distribution with shape $\alpha$ and rate $\lambda$, a Beta distribution with shape parameters $\alpha_1, \alpha_2$, or a Beta prime distribution with shape parameters $\alpha_1, \alpha_2$, respectively. Moreover, $\xi\overset{D}{=}\xi'$ will indicate that two random variables $\xi$ and $\xi'$ have the same distribution.

\begin{theorem}\label{thm:ProbabilisticRepresentation}
The following assertions hold.
\begin{itemize}
\item[(a)] For $\beta\geq -1, \nu\ge-1, d\geq 2$ one has that
\begin{align}
\xi(1-\xi)^{d-1}\left((d-1)!\Vol(Z_{\beta,\nu})\right)^2&\overset{D}{=} (m_{\beta,d}^{-1}\,\rho)^{{2(d-1)\over d+2\beta+1}}(1-\eta)^{d-1}\prod\limits_{i=1}^{d-1}\xi_i,\label{eq:probBeta}
\end{align}
\item[(b)] and for $\beta\geq (d+1)/2,\, 2\beta - d>\nu\ge -1, d\geq 2$ one has that
\begin{align}
(1+\eta^{\prime})^{d-1}\left((d-1)!\Vol(Z^{\prime}_{\beta,\nu})\right)^2&\overset{D}{=} ((m_{\beta,d}^{\prime})^{-1}\,\rho^{\prime})^{{2(d-1)\over d-2\beta+1}}(\xi^{\prime})^{-1}(1+\xi^{\prime})^{d}\prod\limits_{i=1}^{d-1}\xi^{\prime}_i,\label{eq:probBetaPrime}
\end{align}
\end{itemize}
where
\begin{align*}
\xi&\sim\Bet\Big({d+\nu\over 2}+\beta+1, {(d-1)(d+\nu+2\beta)\over 2}\Big),\qquad \xi^{\prime}\sim\Bet^{\prime}\Big(\beta-{d+\nu\over 2}, {(d-1)(2\beta - d -\nu)\over 2}\Big)\\
\eta &\sim\Bet\Big({d+2\beta+1\over 2}, {(d-1)(d+\nu+2\beta)\over 2}\Big),\qquad \eta^{\prime}\sim\Bet^{\prime}\Big(\beta-{d-1\over 2}, {(d-1)(2\beta - d -\nu)\over 2}\Big)\\
\rho&\sim\Gam\Big(d+{(\nu-1)(d-1)\over d+2\beta+1},1\Big),\qquad \rho^{\prime}\sim\Gam\Big(d+{(\nu-1)(d-1)\over d-2\beta+1},1\Big)\\
\xi_i&\sim\Bet\Big({\nu+i+1\over 2}, {d-1-i\over 2}+\beta+1\Big),\qquad \xi^{\prime}_i\sim\Bet^{\prime}\Big({\nu+i+1\over 2}, \beta-{d+\nu\over 2}\Big),\quad i\in\{1,\ldots,d-1\},
\end{align*}
are independent random variables, independent of $\Vol(Z_{\beta,\nu})$ and $\Vol(Z^{\prime}_{\beta,\nu})$, and $m_{\beta,d}$, $m_{\beta,d}^{\prime}$ are defined in \eqref{eq:Constant}.
\end{theorem}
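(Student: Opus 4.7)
The strategy is to show that the random variables on the two sides of each identity have the same distribution by matching all their (real) moments and invoking the fact that distributions on $(0,\infty)$ whose moment sequences satisfy Carleman's criterion are uniquely determined by their moments. All of the random variables appearing on the right-hand sides of \eqref{eq:probBeta} and \eqref{eq:probBetaPrime} are either bounded or of Gamma-type, and similarly $\Vol(Z_{\beta,\nu})^2$ has Gamma-like tails, so the moment-determinacy is straightforward to verify from the explicit expressions of Theorem~\ref{theo:volume}.

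First I would compute the $s$-th moment of the left-hand side of \eqref{eq:probBeta}. Since $\xi$ is independent of $Z_{\beta,\nu}$, this factorises as
\[
\EE\bigl[\xi^{s}(1-\xi)^{s(d-1)}\bigr]\cdot\bigl((d-1)!\bigr)^{2s}\,\EE\Vol(Z_{\beta,\nu})^{2s},
\]
the beta factor being a ratio of Gamma functions by the standard Beta integral, and the volume moment being supplied by Theorem~\ref{theo:volume} (with $s$ replaced by $2s$). Next, using independence of $\rho,\eta,\xi_1,\ldots,\xi_{d-1}$, I would compute the $s$-th moment of the right-hand side as
\[
\bigl(m_{\beta,d}^{-1}\bigr)^{\frac{2s(d-1)}{d+2\beta+1}}\,\EE\rho^{\frac{2s(d-1)}{d+2\beta+1}}\cdot\EE(1-\eta)^{s(d-1)}\cdot\prod_{i=1}^{d-1}\EE\xi_i^{s},
\]
using the standard Gamma and Beta moment formulas. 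The shape parameters of the auxiliary variables $\xi,\eta,\rho,\xi_1,\ldots,\xi_{d-1}$ have been chosen precisely so that each occurrence of a Gamma function on one side is matched by a Gamma function on the other. Indeed, a direct check shows
\[
\tfrac{d+\nu}{2}+\beta+1+\tfrac{(d-1)(d+\nu+2\beta)}{2}=\tfrac{d(d+\nu+2\beta)}{2}+1,\qquad
\tfrac{d+2\beta+1}{2}+\tfrac{(d-1)(d+\nu+2\beta)}{2}=\tfrac{d(d+2\beta)+\nu(d-1)+1}{2},
\]
which aligns the denominators of the beta moments with the Gamma factors in Theorem~\ref{theo:volume}; the $\xi_i$ contribute precisely the product $\prod_{i=1}^{d-1}\Gamma\bigl(\tfrac{i+\nu+s+1}{2}\bigr)/\Gamma\bigl(\tfrac{i+\nu+1}{2}\bigr)$; and the Gamma factor $\EE\rho^{2s(d-1)/(d+2\beta+1)}$ supplies the $\Gamma(d+(\nu+s-1)(d-1)/(d+2\beta+1))/\Gamma(d+(\nu-1)(d-1)/(d+2\beta+1))$ piece. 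Finally, the $\gamma$-dependent prefactor from Theorem~\ref{theo:volume} coincides (after using the definition \eqref{eq:Constant} and the explicit formulas for $c_{d,\beta}$, $c_{d+1,\beta}$) with $(m_{\beta,d}^{-1})^{2s(d-1)/(d+2\beta+1)}$ up to a purely numerical factor which is absorbed by the remaining Gamma ratios.

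The case \eqref{eq:probBetaPrime} proceeds in the same manner, with the Beta distributions replaced by Beta-prime distributions (whose moments are $\EE\zeta^s=\Gamma(\alpha_1+s)\Gamma(\alpha_2-s)/(\Gamma(\alpha_1)\Gamma(\alpha_2))$) and the parameter identities modified by the substitution $\beta\mapsto -\beta$, $d+2\beta+1\mapsto d-2\beta+1$. The required range $2\beta-d-\nu>s>-\nu-1$ in Theorem~\ref{theo:volume} translates into admissibility of the Beta-prime moments used, hence the moment identity holds on a full interval containing a neighbourhood of $0$, which is enough to conclude moment-determinacy. The main obstacle is not conceptual but purely bookkeeping: one has to verify that every Gamma factor produced by the Beta/Beta-prime moment formulas and by $\EE\rho^{t}$ matches a corresponding factor in Theorem~\ref{theo:volume}, and that the powers of $\gamma$, $\sqrt{\pi}$ and the constants $m_{\beta,d}^{(\prime)}$ collapse correctly. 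Since the six shape parameters in the theorem statement have been calibrated exactly for this purpose, the verification reduces to the two elementary sum identities displayed above together with their $\beta'$-analogues.
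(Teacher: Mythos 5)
Your proposal is correct and follows essentially the same route as the paper: both compute the $s$-th moments of the two sides via Theorem~\ref{theo:volume} together with the standard Beta, Beta-prime and Gamma moment formulas, and conclude from the resulting identity of Mellin transforms on an interval of real $s$. The only difference is that you make the moment-determinacy step explicit (the paper leaves it implicit); just note that for $\nu=-1$ the admissible range is $s>0$, so one should argue via uniqueness of the Mellin transform on an open strip rather than via an interval containing a neighbourhood of $0$.
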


\begin{remark}
Equality \eqref{eq:probBeta} generalizes \cite[Theorem 2.6]{GusakovaThaele} for $\beta=-1$ to general values of $\beta\geq -1$ and \cite[Theorem 2.5 (b)]{GKT17} for $\nu = -1$ to general $\nu \ge -1$. Equality \eqref{eq:probBetaPrime} generalizes \cite[Theorem 2.5 (c)]{GKT17} for $\nu = -1$ to general $2\beta - d>\nu\ge-1$.
\end{remark}

\begin{proof}
First of all let us recall that for $\xi_i$ with $s>-{\nu+1\over 2}$ and for $\xi^{\prime}_i$ with $-{\nu+1\over 2}< s< \beta -{d+\nu\over 2}$ we have
\[
\EE[\xi_i^{s}]={\Gamma({d+\nu\over 2}+\beta+1)\Gamma({i+\nu+1\over 2}+s)\over \Gamma({i+\nu+1\over 2})\Gamma({d+\nu\over 2}+\beta+1+s)},\qquad \EE[(\xi_i^{\prime})^{s}]={\Gamma(\beta-{d+\nu\over 2}-s)\Gamma({i+\nu+1\over 2}+s)\over \Gamma(\beta-{d+\nu\over 2})\Gamma({i+\nu+1\over 2})},
\]
respectively, and for $\rho$ and $\rho^{\prime}$ with $s>0$ we have
\[
\EE\Big[\rho^{{2s(d-1)\over d+2\beta+1}}\Big]={\Gamma(d+{(\nu-1)(d-1)\over d+2\beta+1}+{2s(d-1)\over d+2\beta+1})\over \Gamma(d+{(\nu-1)(d-1)\over d+2\beta+1})},\qquad \EE\Big[(\rho^{\prime})^{{2s(d-1)\over d-2\beta+1}}\Big]={\Gamma(d+{(\nu-1)(d-1)\over d-2\beta+1}+{2s(d-1)\over d-2\beta+1})\over \Gamma(d+{(\nu-1)(d-1)\over d-2\beta+1})},
\]
respectively. Moreover, for $s>-{\nu+1\over 2}$ we compute that
$$
\EE\Big[\xi^{s}(1-\xi)^{(d-1)s}\Big]={\Gamma({(d-1)(d+\nu+2\beta)\over 2}+s(d-1))\Gamma({d+\nu\over 2}+\beta+1+s)\Gamma({d(d+2\beta+\nu)\over 2}+1)\over\Gamma({(d-1)(d+\nu+2\beta)\over 2})\Gamma({d+\nu\over 2}+\beta+1)\Gamma({d(d+2\beta+\nu)\over 2}+1+sd)}
$$
and
$$
\EE\Big[(1-\eta)^{(d-1)s}\Big]={\Gamma({d(d+2\beta)+\nu(d-1)+1\over 2})\Gamma({(d-1)(d+\nu+2\beta)\over 2}+s(d-1))\over\Gamma({d(d+2\beta)+\nu(d-1)+1\over 2}+s(d-1))\Gamma({(d-1)(d+\nu+2\beta)\over 2})}.
$$
Combining this with Theorem \ref{theo:volume} we conclude that, for all $s>-{\nu+1\over 2}$,
$$
(d-1)!^{2s}\,\EE\Big[\xi^{s}(1-\xi)^{(d-1)s}\Vol(Z_{\beta,\nu})^{2s}\Big]=m_{\beta,d}^{-{2s(d-1)\over d+2\beta+1}}\EE\Big[\rho^{{2s(d-1)\over d+2\beta+1}}(1-\eta)^{(d-1)s}\prod\limits_{i=1}^{d-1}\xi_i^s\Big],
$$
which finishes the proof of \eqref{eq:probBeta}.
Analogously for $-{\nu+1\over 2}< s< \beta -{d+\nu\over 2}$ we have
$$
\EE\Big[(\xi^{\prime})^{-s}(1+\xi^{\prime})^{ds}\Big]={\Gamma({(d-1)(2\beta-d-\nu)\over 2}-s(d-1))\Gamma(\beta-{d+\nu\over 2}-s)\Gamma({d(2\beta-d-\nu)\over 2})\over\Gamma({(d-1)(2\beta-d-\nu)\over 2})\Gamma(\beta-{d+\nu\over 2})\Gamma({d(2\beta-d-\nu)\over 2}-sd)}
$$
and
$$
\EE\Big[(1+\eta^{\prime})^{(d-1)s}\Big]={\Gamma({d(2\beta-d)-\nu(d-1)+1\over 2})\Gamma({(d-1)(2\beta-d-\nu)\over 2}-s(d-1))\over\Gamma({d(2\beta-d)-\nu(d-1)+1\over 2}+s(d-1))\Gamma({(d-1)(2\beta-d-\nu)\over 2})}.
$$
By Theorem \ref{theo:volume} for all $-{\nu+1\over 2}< s< \beta -{d+\nu\over 2}$ we obtain  that
$$
(d-1)!^{2s}\,\EE\Big[(1+\eta^{\prime})^{(d-1)s}\Vol(Z_{\beta,\nu}^{\prime})^{2s}\Big]=(m_{\beta,d}^{\prime})^{-{2s(d-1)\over d-2\beta+1}}\EE\Big[\rho^{{2s(d-1)\over d-2\beta+1}}(\xi^{\prime})^{-s}(1+\xi^{\prime})^{ds}\prod\limits_{i=1}^{d-1}(\xi_i^{\prime})^s\Big],
$$
and \eqref{eq:probBetaPrime} follows.
\end{proof}

The next result specifies, for integer values of $\nu$, the connection between the distributions of $\Vol(Z_{\beta,\nu})$ and $\Vol(Z^{\prime}_{\beta,\nu})$ with those of the volume of a beta-simplex and the volume of a beta-prime-simplex as studied in \cite{GKT17}, respectively.

\begin{proposition}
\begin{itemize}
\item[(a)] For $d\geq 2$, $\beta\geq -1$ and integers $\nu\geq -1$ we have
\[
(1-\xi)^{d-1}\,\Vol(Z_{\beta,\nu})^2\overset{D}{=} (m_{\beta,d}^{-1}\,\rho)^{{2(d-1)\over d+2\beta+1}}\Vol\left(\conv(X_0,\ldots, X_{d-1})\right)^2,
\]
where $X_0,\ldots, X_{d-1}$ are independent and identically distributed random points in $\BB^{d+\nu}$ whose distribution has density
$$
c_{d+\nu,\beta}(1-\|x\|)^{\beta},\qquad x\in\BB^{d+\nu},
$$
$\xi\sim\Bet({\nu+1\over 2}, {d(d+2\beta)+\nu(d-1)+1\over 2})$ is independent of $\Vol(Z_{\beta,\nu})$ and $\rho\sim\Gam(d+{(\nu-1)(d-1)\over d+2\beta+1},1)$ is independent of $X_0,\ldots, X_{d-1}$.
\item[(b)] For $d\geq 2$, $\beta\geq (d+1)/2$ and integers $2\beta - d>\nu\ge -1$ we have
\[
(1+\xi^{\prime})^{d-1}\,\Vol(Z^{\prime}_{\beta,\nu})^2\overset{D}{=} ((m_{\beta,d}^{\prime})^{-1}\,\rho^{\prime})^{{2(d-1)\over d-2\beta+1}}\Vol\left(\conv(X^{\prime}_0,\ldots, X^{\prime}_{d-1})\right)^2,
\]
where $X^{\prime}_0,\ldots, X^{\prime}_{d-1}$ are independent and identically distributed random points in $\RR^{d+\nu}$ whose distribution has density
$$
c^{\prime}_{d+\nu,\beta}(1+\|x\|)^{-\beta},\qquad x\in\RR^{d+\nu},
$$
$\xi^{\prime}\sim\Bet^{\prime}({\nu+1\over 2}, {d(2\beta - d -\nu)\over 2})$ is independent of $\Vol(Z^{\prime}_{\beta,\nu})$ and $\rho^{\prime}\sim\Gam(d+{(\nu-1)(d-1)\over d-2\beta+1},1)$ is independent of $X^{\prime}_0,\ldots, X^{\prime}_{d-1}$.
\end{itemize}
\end{proposition}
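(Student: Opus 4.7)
The approach combines Remark \ref{rem:rep_typical} with an affine Blaschke--Petkantschin disintegration. Writing $Z_{\beta,\nu}\overset{D}{=}\conv(RY_1,\ldots,RY_d)$ as in Remark \ref{rem:rep_typical}, one immediately obtains $\Vol(Z_{\beta,\nu})^2=R^{2(d-1)}\Vol(\conv(Y_1,\ldots,Y_d))^2$. The substitution $T=m_{d,\beta}R^{d+1+2\beta}$ transforms the density of $R$ (proportional to $r^{2d\beta+d^2+\nu(d-1)}e^{-m_{d,\beta}r^{d+1+2\beta}}$) into that of a $\Gam\big(d+\frac{(\nu-1)(d-1)}{d+1+2\beta},1\big)$-variable, and hence $R^{2(d-1)}\overset{D}{=}(m_{d,\beta}^{-1}\rho)^{2(d-1)/(d+1+2\beta)}$. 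The analogous substitution $T'=m'_{d,\beta}R^{d+1-2\beta}$ handles the beta-prime case. It therefore suffices to establish the reduced identity
\[
(1-\xi)^{d-1}\Vol(\conv(Y_1,\ldots,Y_d))^2\overset{D}{=}\Vol(\conv(X_0,\ldots,X_{d-1}))^2
\]
for part (a) and its beta-prime analogue for part (b), with all variables independent and distributed as stated in the proposition.

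\textbf{Blaschke--Petkantschin step.} For part (a) I would disintegrate the joint law of $(X_0,\ldots,X_{d-1})$ along the common $(d-1)$-dimensional affine hull $E=\aff(X_0,\ldots,X_{d-1})\subset\RR^{d+\nu}$. Parametrising $E$ by the closest point $\rho u$ to the origin, where $\rho\in[0,1]$ and $u$ is a unit vector in the $(\nu+1)$-dimensional orthogonal complement of the linear direction of $E$, and rescaling via $x_i=\rho u+\sqrt{1-\rho^2}\,y_i$ with $y_i$ in the unit ball of that linear direction, one verifies the three elementary identities
\[
1-\|x_i\|^2=(1-\rho^2)(1-\|y_i\|^2),\quad dx_i^{(E)}=(1-\rho^2)^{(d-1)/2}dy_i,\quad \Delta_{d-1}(x_0,\ldots,x_{d-1})=(1-\rho^2)^{(d-1)/2}\Delta_{d-1}(y_0,\ldots,y_{d-1}).
\]
Substituting into the affine Blaschke--Petkantschin formula (which contributes the weight $\Delta_{d-1}^{(d+\nu)-(d-1)+1}=\Delta_{d-1}^{\nu+1}$) and collecting terms yields: (i) the conditional joint density of $(y_0,\ldots,y_{d-1})\in(\BB^{d-1})^d$ is proportional to $\Delta_{d-1}^{\nu+1}\prod_{i=1}^d(1-\|y_i\|^2)^\beta$, i.e.\ the law of $(Y_1,\ldots,Y_d)$; (ii) the factorisation $\Vol(\conv(X_0,\ldots,X_{d-1}))^2=(1-\rho^2)^{d-1}\Vol(\conv(y_0,\ldots,y_{d-1}))^2$; and (iii) after combining all remaining powers of $(1-\rho^2)$ with the factor $\rho^\nu\,d\rho$ from the canonical measure on affine flats, the marginal law of $\rho^2$ is $\Bet\big(\tfrac{\nu+1}{2},\tfrac{d(d+2\beta)+\nu(d-1)+1}{2}\big)$, which is exactly the distribution of $\xi$. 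Since the disintegration is a product, $\rho$ is independent of the rescaled simplex, which completes part (a).

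\textbf{Beta-prime case and main obstacle.} Part (b) proceeds identically in $\RR^{d+\nu}$ with the rescaling $x_i=\rho u+\sqrt{1+\rho^2}\,y_i$, giving $1+\|x_i\|^2=(1+\rho^2)(1+\|y_i\|^2)$ and $\rho\in(0,\infty)$; the volume scaling yields $(1+\rho^2)^{d-1}$, the conditional density of $(y_i)$ matches that of $(Y_i')$ from Remark \ref{rem:rep_typical}(b$'$), and the $\rho^2$-marginal becomes $\Bet^{\prime}\big(\tfrac{\nu+1}{2},\tfrac{d(2\beta-d-\nu)}{2}\big)$, the integrability over $\rho$ being ensured precisely by $2\beta-d>\nu$. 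The main technical obstacle is the careful bookkeeping of the three sources of $(1\pm\rho^2)$-factors (the rescaling Jacobian across $d$ coordinate changes, the Blaschke--Petkantschin exponent $\nu+1$ on $\Delta_{d-1}$, and the product of $d$ radial weights) and verifying that their combination with $\rho^\nu\,d\rho$ produces exactly the beta/beta-prime shape parameters stated in the proposition. A useful cross-check is that the induced $s$-th moments agree with those predicted by Theorem \ref{theo:volume} combined with the moment formulas for beta- and beta-prime-simplex volumes from \cite[Theorem 2.3]{GKT17}; this also provides an alternative purely moment-based proof via Mellin-transform uniqueness, automatic in the $\beta$-case by bounded support and valid within the strip $-(\nu+1)/2<s<\beta-(d+\nu)/2$ in the $\beta'$-case.
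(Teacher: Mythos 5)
Your proposal is correct, but it proves the proposition by a genuinely different route than the paper. The paper's proof is pure moment-matching: it takes the formula for $(d-1)!^{2s}\,\EE[\Vol(\conv(X_0,\ldots,X_{d-1}))^{2s}]$ from \cite[Theorem 2.3]{GKT17}, computes $\EE[(1-\xi)^{(d-1)s}]$ (resp.\ $\EE[(1+\xi')^{(d-1)s}]$) directly, and checks that the product reproduces the moments of $\Vol(Z_{\beta,\nu})^{2s}$ from Theorem~\ref{theo:volume} times the moments of the gamma factor. Your argument instead performs the reduction via Remark~\ref{rem:rep_typical} (identical to how one reads off the gamma factor in both approaches) and then establishes the core identity by an affine Blaschke--Petkantschin disintegration in $\RR^{d+\nu}$; this in effect re-derives the fact, which the paper only cites afterwards (Remark following the proposition, via \cite[Theorem 2.7]{GKT17}), that $\xi$ (resp.\ $\xi'$) is the squared distance from the origin to $\aff(X_0,\ldots,X_{d-1})$, and that this distance is independent of the rescaled simplex. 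I checked your bookkeeping: the total exponent of $(1-\rho^2)$ is $\tfrac{d(d-1)}{2}+\tfrac{(d-1)(\nu+1)}{2}+d\beta$, which combined with $\rho^{\nu}\,\dd\rho$ gives exactly $\Bet\bigl(\tfrac{\nu+1}{2},\tfrac{d(d+2\beta)+\nu(d-1)+1}{2}\bigr)$ for $\rho^2$, and the $\beta'$-computation likewise yields $\Bet'\bigl(\tfrac{\nu+1}{2},\tfrac{d(2\beta-d-\nu)}{2}\bigr)$ with integrability precisely under $\nu<2\beta-d$. What your route buys is a direct distributional identity that explains conceptually where $\xi$ comes from and that sidesteps the moment-determinacy question in the $\beta'$-case, where the volumes are unbounded (the paper's moment-matching argument silently relies on this; your closing remark about Mellin-transform uniqueness on a strip is the missing justification there). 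What the paper's route buys is brevity, given that all the needed moment formulas are already on the table. Two cosmetic points: the displayed Blaschke--Petkantschin exponent should read $(d+\nu)-(d-1)=\nu+1$ rather than $(d+\nu)-(d-1)+1$ (the value $\nu+1$ you actually use is the correct one), and note that the density in the statement should be $c_{d+\nu,\beta}(1-\|x\|^2)^{\beta}$, as your computation correctly assumes.
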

\begin{proof}
From \cite[Theorem 2.3 (b)]{GKT17} we have
\begin{align*}
(d-1)!^{2s}\EE[\Vol\left(\conv(X_0,\ldots, X_{d-1})\right)^{2s}]&={\Gamma({d+2\beta+\nu+2\over 2})^d\over \Gamma({d+2\beta+\nu+2\over 2}+s)^{d}}{\Gamma({d(d+2\beta+\nu)+2\over 2}+ds)\over \Gamma({d(d+\beta+\nu)+2\over 2}+(d-1)s)}\\
&\qquad\qquad\qquad\qquad\times\prod\limits_{i=1}^{d-1}{\Gamma({\nu+1+i\over 2}+s)\over\Gamma({\nu+1+i\over 2})},
\end{align*}
for any $\beta-{d+\nu\over 2}>s>0$. Using the equality
\begin{align*}
\EE[(1-\xi)^{(d-1)s}]&={\Gamma({d(d+2\beta+\nu)\over 2}+1)\Gamma({d(d+2\beta)+\nu(d-1)+1\over 2}+(d-1)s)\over \Gamma({d(d+2\beta+\nu)\over 2}+1+(d-1)s)\,\Gamma({d(d+2\beta)+\nu(d-1)+1\over 2})},
\end{align*}
and combining this with Theorem \ref{theo:volume} we conclude, for all $\beta-{d+\nu\over 2}>s>0$,
\[
\EE\Big[(1-\xi)^{d-1}\,\Vol(Z_{\beta,\nu})^2\Big]=\EE\Big[(m_{\beta,d}^{-1}\,\rho)^{{2(d-1)\over d+2\beta+1}}\Vol\left(\conv(X_0,\ldots, X_{d-1})\right)^2\Big],
\]
and (a) is proven.
Analogously, from \cite[Theorem 2.3 (b) and (c)]{GKT17} we have
\begin{align*}
&(d-1)!^{2s}\EE[\Vol\left(\conv(X^{\prime}_0,\ldots, X^{\prime}_{d-1})\right)^{2s}]\\
&\qquad\qquad={\Gamma(\beta-{d+\nu\over 2}-s)^d\over \Gamma(\beta-{d+\nu\over 2})^{d}}{\Gamma({d(2\beta-d-\nu)\over 2}-(d-1)s)\over \Gamma({d(2\beta-d-\nu)\over 2}-ds)}\prod\limits_{i=1}^{d-1}{\Gamma({\nu+1+i\over 2}+s)\over\Gamma({\nu+1+i\over 2})},
\end{align*}
and using that
$$
\EE[(1+\xi^{\prime})^{(d-1)s}]={\Gamma({d(2\beta-d-\nu)\over 2}-(d-1)s)\Gamma({d(2\beta - d)-\nu(d-1)+1\over 2})\over \Gamma({d(2\beta-d-\nu)\over 2})\,\Gamma({d(2\beta - d)-\nu(d-1)+1\over 2}-s(d-1))},
$$
together with Theorem \ref{theo:volume} we have for all $s>0$,
\[
\EE\Big[(1+\xi^{\prime})^{d-1}\,\Vol(Z^{\prime}_{\beta,\nu})^2\Big]=\EE\Big[((m_{\beta,d}^{\prime})^{-1}\,\rho^{\prime})^{{2(d-1)\over d-2\beta+1}}\Vol\left(\conv(X^{\prime}_0,\ldots, X^{\prime}_{d-1})\right)^2\Big],
\]
which finishes the proof of (b) and theorem.
\end{proof}

\begin{remark}\rm
The formula for $\Vol(Z_{\beta,\nu})$ is the extension of \cite[Proposition 2.8]{GusakovaThaele} to general $\beta\geq -1$.
\end{remark}
\begin{remark}
In \cite[Theorem 2.7]{GKT17} it was shown that the random variable $\xi$ and the random variable $\xi^{\prime}$ in the previous proposition are equal by distribution to the squared distance from the origin to the $(d-1)$-dimensional affine subspace spanned by the random vectors $X_0,\ldots, X_{d-1}$ and by the random vectors $X^{\prime}_0,\ldots, X^{\prime}_{d-1}$, respectively.
\end{remark}

\section{Angle sums and face intensities}\label{sec:AnglesFaceIntensities}

The aim of this section is to compute the intensity of $j$-dimensional faces in the $\beta$-Delaunay tessellations $\cD_\beta$ and $\cD_{\beta}^\prime$, for all $j\in \{0,\ldots,d-1\}$. Intuitively, the face intensities can be understood as follows. In a stationary random tessellation $\mathcal T$, the expected number of $j$-dimensional faces in a large cube of volume $V$ is asymptotically equivalent to $\gamma_j(\mathcal T)V$, as $V\to\infty$, for a certain constant $\gamma_j(\mathcal T)$, called the {intensity} of $j$-dimensional faces of $\mathcal T$. A precise definition, using Palm calculus, will be given below. To evaluate these constants for the tessellations $\cD_\beta$ and $\cD_{\beta}^\prime$, we will first compute the expected angle sums of the volume-power weighted typical cells of $\cD_\beta$ and $\cD_{\beta}^\prime$.

\subsection{Expected angle sums of weighted typical cells}
Let us recall that $Z_{\beta,\nu}$ and $Z_{\beta,\nu}^\prime$ denote the typical cells of $\cD_\beta$ and $\cD_{\beta}^\prime$ weighted by the $\nu$-th power of their volume. Our aim is to compute the expected angle sums of these random simplices. First we need to introduce the necessary notation.

Given a simplex $T := \conv(Z_1,\ldots,Z_d) \subset \RR^{d-1}$, we denote by $\sigma_k(T)$ the \textbf{sum of internal angles} of $T$ at all its $k$-vertex faces of the form $\conv(Z_{i_1},\ldots, Z_{i_k})$, that is
$$
\sigma_k(T) = \sum_{\substack{1\leq i_1<\ldots< i_k\leq d\\F := \conv(Z_{i_1},\ldots,Z_{i_k})}} \beta(F,T), \qquad k\in \{1,\ldots,d\}.
$$
Here, $\beta(F,T)$ is the internal angle of $T$ at its face $F$ normalized in such a way that the angle of the full space is $1$, see \cite[p.\ 458]{SW}.
If $Z_1,\ldots,Z_d$ are $d$ i.i.d.\ random points in $\BB^{d-1}$ distributed according to the beta density
$$
f_{d-1,\beta}(z) = c_{d-1,\beta} (1-\|z\|^2)^{\beta}, \qquad z\in \BB^{d-1},
$$
then $\conv(Z_1,\ldots,Z_d)$ is called the beta simplex with parameter $\beta>-1$. The beta simplex with parameter $\beta=-1$ is defined as $\conv(Z_1,\ldots,Z_d)$, where $Z_1,\ldots,Z_d$ are i.i.d.\ uniform on the unit sphere $\SS^{d-2}$.
The expected angle sums of these simplices, denoted by
$$
\mathbb J_{d,k}(\beta) : = \EE \sigma_k(\conv(Z_1,\ldots,Z_d)), \qquad k\in \{1,\ldots,d\},
$$
have been computed in~\cite{kabluchko_formula}, see Theorem~1.2 and the discussion thereafter. According to this formula, we have
\begin{equation}\label{eq:J_nk_integral}
\mathbb J_{d,k}\left(\frac{\alpha-d+1}{2}\right)
=
\binom dk \int_{-\infty}^{+\infty} c_{\frac{\alpha d}2} (\cosh u)^{-\alpha d - 2}
\left(\frac 12  + \ii \int_0^u  c_{\frac{\alpha-1}{2}} (\cosh v)^{\alpha}\dd v \right)^{d-k} \dd u,
\end{equation}
for all $d\geq 3$, $k\in \{1,\ldots,d\}$ and $\alpha \geq  d-3$, where
\begin{equation*}
c_{\gamma} :=  c_{1,\gamma} = \frac{ \Gamma\left(\gamma + \frac{3}{2} \right) }{  \sqrt \pi\, \Gamma (\gamma+1)}, \qquad \gamma>-1.
\end{equation*}
Similarly, let $Z_1^\prime,\ldots,Z_d^\prime$ be $d$ i.i.d.\ random points in $\RR^{d-1}$ distributed according to the beta$^\prime$ density
$$
f^\prime_{d-1,\beta}(z) = c_{d-1,\beta}^\prime (1+\|z\|^2)^{-\beta}, \qquad z\in \RR^{d-1}.
$$
Then, $\conv(Z_1^\prime,\ldots,Z_d^\prime)$ is called the beta simplex with parameter $\beta> \frac{d-1}{2}$.
The expected angle sums of the beta$^\prime$ simplices, denoted by
$$
\mathbb J_{d,k}^\prime(\beta) : = \EE \sigma_k(\conv(Z_1^\prime,\ldots,Z_d^\prime)), \qquad k\in \{1,\ldots,d\},
$$
have been computed in~\cite{kabluchko_formula}, see Theorem~1.7 and the discussion thereafter:
\begin{equation*}
\mathbb J_{d,k}^\prime\left(\frac{\alpha+d-1}{2}\right)
=
\binom dk \int_{-\infty}^{+\infty} c_{\frac{\alpha d}2}^\prime (\cosh u)^{-(\alpha d - 1)}
\left(\frac 12  + \ii \int_0^u  c_{\frac{\alpha+1}{2}}^\prime (\cosh v)^{\alpha-1}\dd v \right)^{d-k} \dd u,
\end{equation*}
for all $d\in\NN$, $k\in \{1,\ldots,d\}$ and for all $\alpha >0$ such that $\alpha d >1$. Here,
\begin{equation*}
c_{\gamma}^\prime :=  c_{1,\gamma}^\prime = \frac{ \Gamma (\gamma) }{  \sqrt \pi\, \Gamma \left(\gamma - \frac 12\right)}, \qquad \gamma > \frac 12.
\end{equation*}

We are now going to state a formula for the expected angle sums of the typical cells $Z_{\beta,\nu}$ and $Z_{\beta,\nu}^\prime$. Note that we include the case of $Z_{-1,\nu}$ which is interpreted as the $\nu$-weighted typical cell in the classical Poisson-Delaunay tessellation; see Remark~\ref{rem:rep_typical_beta_-1}.
\begin{theorem}\label{theo:angle_sum_cell}
Let $Z_{\beta,\nu}$ be the $\nu$-weighted typical cell of the $\beta$-Delaunay tessellation with $\beta\geq -1$ and integer $\nu\ge -1$.
Also, let $Z_{\beta,\nu}^\prime$ be the $\nu$-weighted typical cell of the $\beta^\prime$-Delaunay tessellation with $\beta > (d+1)/2$ and integer $\nu$ such that  $2\beta-d > \nu \ge-1$.
Then, for all $k\in \{1,\ldots,d\}$,
\begin{align*}
\EE \sigma_k(Z_{\beta,\nu})
&=
\mathbb J_{d,k}\left(\beta + \frac {\nu+1} 2\right)\\
&=
\binom dk \int_{-\infty}^{+\infty} c_{\frac{\alpha d}2} (\cosh u)^{-\alpha d - 2}
\left(\frac 12  + \ii \int_0^u  c_{\frac{\alpha-1}{2}} (\cosh v)^{\alpha}\dd v \right)^{d-k} \dd u,
\\
\EE \sigma_k(Z_{\beta,\nu}^\prime)
&=
\mathbb J_{d,k}^\prime\left(\beta - \frac {\nu+1} 2\right)\\
&=
\binom dk \int_{-\infty}^{+\infty} c_{\frac{\alpha^\prime d}2}^\prime (\cosh u)^{-(\alpha^\prime d - 1)}
\left(\frac 12  + \ii \int_0^u  c_{\frac{\alpha^\prime-1}{2}}^\prime (\cosh v)^{\alpha^\prime-1}\dd v \right)^{d-k} \dd u,
\end{align*}
where $\alpha = 2\beta + \nu + d$ and $\alpha' = 2\beta - \nu - d$.
\end{theorem}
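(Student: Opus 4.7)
The plan is to combine the stochastic representation in Remark~\ref{rem:rep_typical} with an affine Blaschke--Petkantschin identity and Kabluchko's formula~\eqref{eq:J_nk_integral}. From Remark~\ref{rem:rep_typical}, the $\nu$-weighted typical cell $Z_{\beta,\nu}$ has the same distribution as $R\cdot\conv(Y_1,\ldots,Y_d)$, where $R>0$ is independent of the tuple $(Y_1,\ldots,Y_d)\in(\BB^{d-1})^d$ with joint density proportional to $\Delta_{d-1}(y_1,\ldots,y_d)^{\nu+1}\prod_{i=1}^d(1-\|y_i\|^2)^{\beta}$. Since every internal angle of a simplex, and hence $\sigma_k$, is invariant under similarities, the random dilation by $R$ drops out and $\EE\sigma_k(Z_{\beta,\nu})=\EE\sigma_k(\conv(Y_1,\ldots,Y_d))$.

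The second step identifies the weighted beta tuple $(Y_1,\ldots,Y_d)$ with $d$ iid beta-$\beta$ random points $X_1,\ldots,X_d$ in the higher-dimensional ball $\BB^{d+\nu}$, reduced to their affine hull (this is meaningful precisely because $\nu$ is assumed to be an integer $\ge-1$, so that $d+\nu\ge d-1$ is an admissible ambient dimension). Let $L$ be the $(d-1)$-dimensional affine hull of $X_1,\ldots,X_d$ and let $p_L$ denote the foot of the perpendicular from the origin onto $L$. Writing $X_i=p_L+\sqrt{1-\|p_L\|^2}\,O v_i$ with $v_i\in\BB^{d-1}$ and $O$ an isometry of $\RR^{d-1}$ onto the linear subspace parallel to $L$, the affine Blaschke--Petkantschin formula for $d$ points in $\RR^{d+\nu}$ spanning a $(d-1)$-flat produces a Jacobian proportional to $\Delta_{d-1}^{\nu+1}$. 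Combining this with the factorization $1-\|X_i\|^2=(1-\|p_L\|^2)(1-\|v_i\|^2)$ and integrating out $p_L$ and the orientation of $L$, one obtains for every similarity-invariant Borel functional $F$ the identity $\EE F(X_1,\ldots,X_d)=\EE F(Y_1,\ldots,Y_d)$. Applied to $F=\sigma_k\circ\conv$, this gives $\EE\sigma_k(\conv(Y_1,\ldots,Y_d))=\EE\sigma_k(\conv(X_1,\ldots,X_d))$.

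Finally we invoke Kabluchko's formula~\eqref{eq:J_nk_integral}. The derivation in~\cite{kabluchko_formula} extends verbatim to $d$ iid beta-$\beta$ points in $\BB^n$ for any $n\ge d-1$, with the ambient dimension entering the underlying one-dimensional integral only through the combination $\alpha=2\beta+n$. Specializing to $n=d+\nu$ gives $\alpha=2\beta+\nu+d$, and substituting this into~\eqref{eq:J_nk_integral} produces precisely $\mathbb J_{d,k}(\beta+(\nu+1)/2)$, which proves the first assertion. The $\beta^{\prime}$-case is entirely analogous: iid beta$^{\prime}$-$\beta$ points in $\RR^{d+\nu}$ reduce via the corresponding Blaschke--Petkantschin identity (now with $1+\|X_i\|^2=(1+\|p_L\|^2)(1+\|v_i\|^2)$) to the weighted beta$^{\prime}$ density on $\RR^{d-1}$, and the corresponding extension of~\eqref{eq:J_nk_integral_prime} to ambient dimension $d+\nu$ with effective parameter $\alpha^{\prime}=2\beta-\nu-d$ yields $\mathbb J^{\prime}_{d,k}(\beta-(\nu+1)/2)$. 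The main technical point to verify is that the derivation of the one-dimensional integrals in~\cite{kabluchko_formula} depends on the ambient dimension only through $\alpha$ respectively $\alpha^{\prime}$; this requires inspecting how the ambient dimension enters the Gaussian moments underlying~\eqref{eq:J_nk_integral} and~\eqref{eq:J_nk_integral_prime}, but introduces no essential new difficulty beyond careful bookkeeping.
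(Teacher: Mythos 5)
Your argument is correct and arrives at the theorem by what is, at bottom, the same mechanism as the paper, but the middle step is packaged differently, so a comparison is worthwhile. After the identical first reduction (scale invariance of $\sigma_k$ kills the radial factor $R$, leaving $\EE \sigma_k(\conv(Y_1,\ldots,Y_d))$ for the $\Delta_{d-1}^{\nu+1}$-weighted beta tuple in $\BB^{d-1}$), the paper simply cites Remark~4.2 of~\cite{beta_polytopes}: the weighted tuple has the same expected angle sums as the \emph{unweighted} i.i.d.\ beta tuple in the \emph{same} ball $\BB^{d-1}$ with parameter shifted to $\beta+\frac{\nu+1}{2}$, after which \eqref{eq:J_nk_integral} applies verbatim with $\alpha=2\beta+\nu+d$. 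You instead lift the weighted tuple to $d$ i.i.d.\ beta-$\beta$ points in the higher-dimensional ball $\BB^{d+\nu}$ via the affine Blaschke--Petkantschin formula (your factorizations $1-\|X_i\|^2=(1-\|p_L\|^2)(1-\|v_i\|^2)$ and the beta$'$ analogue are correct, and the integrality of $\nu$ is used in exactly the same place as in the paper), and then appeal to an extension of \eqref{eq:J_nk_integral} to ambient dimension $d+\nu$. Two caveats. First, the ``technical point to verify'' that you defer is not incidental: the fact that the angle sums of $d$ beta points in $\BB^{n}$ depend on $(\beta,n)$ only through $\alpha=2\beta+n$ is, in the cited literature, itself obtained by projecting back onto the affine hull and invoking precisely the weighted-to-unweighted identity of Remark~4.2 of~\cite{beta_polytopes}; so your route either secretly reuses that identity or must lean on the discussion following Theorem~1.2 of~\cite{kabluchko_formula}, which does cover general ambient dimension. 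This makes it a citation-bookkeeping issue rather than a mathematical gap, but as written the burden is shifted rather than discharged. Second, you implicitly assume $\beta>-1$: the theorem also covers $\beta=-1$, where the density representation of Remark~\ref{rem:rep_typical} is replaced by the spherical limit of Remark~\ref{rem:rep_typical_beta_-1} and the paper runs a separate (parallel) argument with points on $\SS^{d-2}$; your Blaschke--Petkantschin step would need its spherical analogue there.
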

\begin{proof}
We consider the $\beta$-Delaunay case. Let first $\beta>-1$. Since rescaling does not change angle sums, it follows from Remark~\ref{rem:rep_typical} that
$$
\EE \sigma_k(Z_{\beta,\nu}) = \EE \sigma_k(\conv(Y_1,\ldots,Y_d)),
$$
where $(Y_1,\ldots,Y_d)$ are $d$ random points in the unit ball $\BB^{d-1}$ whose joint density is proportional to
$$
\Delta_{d-1}(y_1,\ldots,y_d)^{\nu+1} \, \prod\limits_{i=1}^d(1-\|y_i\|^2)^{\beta},
\qquad
y_1\in \BB^{d-1},\ldots,y_d\in \BB^{d-1}.
$$
On the other hand, let $Y_1^*,\ldots,Y_d^*$ be $d$ i.i.d.\ random points in $\BB^{d-1}$ with joint density proportional to
$$
\prod\limits_{i=1}^d(1-\|y_i^*\|^2)^{\beta+ \frac {\nu+1} 2}.
$$
By Remark 4.2 of~\cite{beta_polytopes}, we have
$$
\EE \sigma_k(\conv(Y_1,\ldots,Y_d)) = \EE \sigma_k(\conv(Y_1^*,\ldots,Y_d^*)).
$$
The expected angle sums of $\conv(Y_1^*,\ldots,Y_d^*)$ are
\begin{equation*}
\EE \sigma_k(\conv(Y_1^*,\ldots,Y_d^*))
=
\mathbb J_{d,k}\left(\beta + \frac {\nu+1} 2\right),
\end{equation*}
which is given by~\eqref{eq:J_nk_integral} with  $\alpha = 2\beta + \nu + d$. Taking everything together, proves the theorem in the $\beta$-Delaunay case with $\beta>-1$.

For the $\beta$-Delaunay case with $\beta=-1$ (where $Z_{-1,\nu}$ is interpreted as the $\nu$-weighted cell in the classical Poisson-Delaunay tessellation), the starting point is Remark~\ref{rem:rep_typical_beta_-1} which implies that
$$
\EE \sigma_k(Z_{-1,\nu}) = \EE \sigma_k(\conv(Y_1,\ldots,Y_d)),
$$
where $(Y_1,\ldots,Y_d)$ are $d$ random points in the unit sphere $\SS^{d-2}$ whose joint probability law is proportional to
$$
\Delta_{d-1}(y_1,\ldots,y_d)^{\nu+1} \sigma_{d-2}(\dd y_1)\ldots \sigma_{d-2}(\dd y_d),
\qquad
y_1\in \SS^{d-2},\ldots, y_d\in \SS^{d-2}.
$$
The rest of the proof is similar to the case $\beta>-1$. The $\beta^\prime$-case is similar as well.
\end{proof}

We stated Theorem~\ref{theo:angle_sum_cell} for integer $\nu$ only because this assumption is required by the method of proof of Remark 4.2 in~\cite{beta_polytopes}. Specifying Theorem~\ref{theo:angle_sum_cell} to $\nu=0$, respectively $\nu=1$, we obtain the expected angle sums of the typical cell, respectively, the cell containing the origin, of the $\beta$-Delaunay tessellation, for $\beta\geq -1$.  For the typical cell ($\nu=0$) of the classical Poisson-Delaunay tessellation in $\RR^{d-1}$ (corresponding to $\beta=-1$), the angle sum is given by
$$
\EE \sigma_k (Z_{-1,0}) = \mathbb J_{d,k}\left(-\frac 12 \right), \qquad k\in \{1,\ldots,d\}.
$$
Applying to the quantity on the right-hand side Theorem~1.3 of~\cite{kabluchko_formula}, we arrive at the following result.

\begin{theorem}\label{theo:angle_sum_cell_beta_-1}
Let $Z=Z_{-1,0}$ be the typical cell of the classical Poisson-Delaunay tessellation $\cD_{-1}$ in $\RR^{d-1}$. Then, for all $k\in \{1,\ldots,d\}$ such that $(d-1)(k-1)$ is even, we have
$$
\EE \sigma_{k}(Z) =
\binom{d}{k} \left(\frac{\Gamma(\frac{d}{2})}{\sqrt{\pi}\, \Gamma(\frac{d-1}{2})}\right)^{d-k} \cdot \frac{\sqrt \pi\, \Gamma(\frac{(d-1)^2+2}{2}) }{\Gamma(\frac{(d-1)^2+1}{2})}
\cdot \Res\limits_{x=0} \left[\frac{\left(\int_{0}^x (\sin y)^{d-2} \dd y\right)^{d-k}}{(\sin x)^{(d-1)^2+1}}\right].
$$
\end{theorem}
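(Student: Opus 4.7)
The plan is to combine two previously established results, with essentially no new computation required. The first is Theorem \ref{theo:angle_sum_cell} of the present paper applied to the parameters $\beta=-1$ and $\nu=0$; the second is Theorem 1.3 of \cite{kabluchko_formula}, which evaluates $\mathbb{J}_{d,k}(-1/2)$ as a residue.

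First, I would specialize Theorem \ref{theo:angle_sum_cell}. The typical cell of the classical Poisson-Delaunay tessellation $\cD_{-1}$ is $Z_{-1,0}$, interpreted via Remark \ref{rem:rep_typical_beta_-1} as the unweighted typical cell obtained in the $\beta\to-1$ limit. Substituting $\beta=-1$ and $\nu=0$ into the formula $\EE\sigma_k(Z_{\beta,\nu})=\mathbb{J}_{d,k}(\beta+\tfrac{\nu+1}{2})$ immediately yields
\[
\EE \sigma_k(Z) = \mathbb{J}_{d,k}\!\left(-\tfrac{1}{2}\right), \qquad k\in\{1,\ldots,d\}.
\]
Geometrically this reflects the well-known fact that, after the natural rescaling, the typical cell of $\cD_{-1}$ has the same distribution (up to scale) as a simplex whose vertices are i.i.d.\ uniform on $\SS^{d-2}$, which is precisely the beta-simplex with parameter $-1/2$.

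Second, I would invoke Theorem 1.3 of \cite{kabluchko_formula}. That theorem takes the integral representation in \eqref{eq:J_nk_integral} (specialized to $\alpha = d-2$, so that $(\alpha-d+1)/2 = -1/2$) and, by contour deformation and a change of variables $u\mapsto x$ converting $\cosh u$-integrals into $\sin x$-integrals on the sphere, rewrites it as a residue at the origin. The parity hypothesis that $(d-1)(k-1)$ is even is exactly what makes the residue nonzero (for the opposite parity, the integrand inside the residue is holomorphic at $0$ and a different expression applies, but this case is excluded from the statement). Applied verbatim, Theorem 1.3 of \cite{kabluchko_formula} gives
\[
\mathbb{J}_{d,k}\!\left(-\tfrac12\right) = \binom{d}{k}\!\left(\frac{\Gamma(\tfrac{d}{2})}{\sqrt{\pi}\,\Gamma(\tfrac{d-1}{2})}\right)^{\!d-k}\! \frac{\sqrt\pi\,\Gamma(\tfrac{(d-1)^2+2}{2})}{\Gamma(\tfrac{(d-1)^2+1}{2})}\Res_{x=0}\!\left[\frac{\bigl(\int_0^x(\sin y)^{d-2}\dd y\bigr)^{d-k}}{(\sin x)^{(d-1)^2+1}}\right].
\]
Chaining this with the first step produces the claimed identity.

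The only potential obstacle is purely bookkeeping: verifying that the constants $c_{\gamma}$ in the definition \eqref{eq:c_beta} line up with the constants in Theorem 1.3 of \cite{kabluchko_formula} when $\alpha=d-2$, and that the exponents $(d-1)^2+1$ and $(d-1)^2+2$ arise correctly from $\alpha d + 2 = d(d-2)+2 = (d-1)^2+1$ (for the pole) and the associated gamma-function prefactor. No limit is actually needed in the $\beta\to-1$ direction at this stage, since Theorem \ref{theo:angle_sum_cell} already covers $\beta=-1$ directly; in particular, no additional justification of the interchange of limit and expectation is required.
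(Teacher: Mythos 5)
Your proposal is correct and follows exactly the route the paper takes: specialize Theorem~\ref{theo:angle_sum_cell} to $\beta=-1$, $\nu=0$ (which is legitimate since that theorem covers $\beta=-1$ via Remark~\ref{rem:rep_typical_beta_-1}) to get $\EE\sigma_k(Z)=\mathbb J_{d,k}(-\tfrac12)$, and then apply Theorem~1.3 of \cite{kabluchko_formula} with $\alpha=d-2$, so that $\alpha d+2=(d-1)^2+1$. The bookkeeping you flag checks out, so nothing further is needed.
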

In the case when both $d$ and $k$ are even, Proposition~1.4 of~\cite{kabluchko_formula} yields a formula for $\EE \sigma_k(Z)$ which is more complicated than the one given in Theorem~\ref{theo:angle_sum_cell_beta_-1}.

\subsection{Behaviour of $\beta$-Delaunay cells and their expected angle sums as $\beta\to\infty$}

\begin{figure}[t]\label{fig:ConvergenceToRegular}
\centering
\includegraphics[width=0.5\columnwidth]{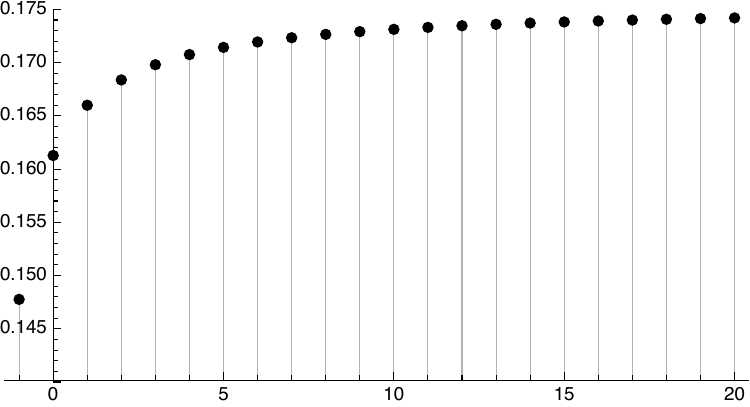}
\caption{Numerical values for the expected angle sums $\EE\sigma_k(Z_{\beta,\nu})$ of the $\nu$-weighted typical Delaunay simplex in $\RR^{d-1}$ with $\beta\in\{-1,0,\ldots,20\}$, $\nu=0$, $d=4$ and $k=1$. The corresponding angle sum of a regular simplex is $\frac 3 \pi \arccos \frac 13 -1\approx 0.1755$.}
\end{figure}

Figure \ref{fig:ConvergenceToRegular} shows the numerical values for the expected angle sums $\EE\sigma_1(Z_{\beta,\nu})$ of the $\nu$-weighted typical $\beta$-Delaunay simplex in $\RR^{d-1}$ for $\beta\in\{-1,0,\ldots,20\}$, with $\nu=0$ and $d=4$. It suggests that, as $\beta$ grows, $\EE\sigma_1(Z_{\beta,\nu})$ approaches the value ${3\over \pi} \arccos \frac 13 -1\approx 0.1755$, which is the angle sum $\sigma_1(\Sigma_3)$ of a regular simplex $\Sigma_3$ in $\RR^3$. The next proposition confirms this conjecture in full generality, that is, for general dimensions $d$, weights $\nu$, and $k\in\{1,\ldots,d\}$. Moreover, it states that the weak limit of $Z_{\beta,\nu}$ as $\beta\to\infty$ is the volume -weighted Gaussian simplex whose angle sums, by coincidence, are the same as for the regular simplex.   We will come back to this behaviour in part II of this series of papers, where we shall describe the limit of the whole $\beta$-Delaunay tessellations as $\beta\to\infty$.

\begin{proposition}
Fix $d\geq 2$ and $\nu>-1$. Then, as $\beta\to\infty$, the distribution of $\sqrt{2\beta}\, Z_{\beta,\nu}$ as well as that of $\sqrt{2\beta}\, Z_{\beta,\nu}^\prime$, converges weakly on the space $\mathcal C'$ to the distribution of the volume-power weighted Gaussian random simplex $\conv(G_1,\ldots, G_d)$, where $(G_1,\ldots,G_d)$ are $d$ random points in $\RR^{d-1}$ with the joint density given by
$$
\frac{(d-1)!^{\nu+1}}{ d^{\frac{\nu+1}2} 2^{(\nu+1)(d-1)/2}}
\left(\prod\limits_{i=1}^{d-1} {\Gamma({i\over 2})\over \Gamma({i+\nu+1\over 2})}\right)
\Delta_{d-1}(g_1,\ldots,g_d)^{\nu+1}
\left(\frac {1}{\sqrt{2\pi}}\right)^{d(d-1)} \prod_{i=1}^d  e^{-\|g_i\|^2/2},
$$
for $g_1\in\RR^{d-1},\ldots,g_d\in\RR^{d-1}$.
Also, if $\nu\ge-1$ is an integer, then for all $k\in\{1,\ldots,d\}$ one has that
$$
\lim_{\beta\to\infty}\EE\sigma_k(Z_{\beta,\nu})
=
\lim_{\beta\to\infty}\EE\sigma_k(Z_{\beta,\nu}^\prime)
=
\sigma_k(\Sigma_{d-1}),
$$
where $\Sigma_{d-1}$ stands for a regular simplex with $d$ vertices in $\RR^{d-1}$.
\end{proposition}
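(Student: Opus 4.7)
The plan is to exploit the stochastic representation of Remark~\ref{rem:rep_typical}, which writes $Z_{\beta,\nu}\overset{D}{=}\conv(RY_1,\ldots,RY_d)$ as the product of an independent radial factor $R$ and a spatial tuple $(Y_1,\ldots,Y_d)$, and to analyse each factor separately. Because
\[
\sqrt{2\beta}\,Z_{\beta,\nu}\overset{D}{=}R\cdot\conv\left(\sqrt{2\beta}\,Y_1,\ldots,\sqrt{2\beta}\,Y_d\right),
\]
Slutsky's lemma will reduce the weak convergence claim to two steps: (i) $R\to 1$ in probability, and (ii) $\sqrt{2\beta}(Y_1,\ldots,Y_d)\Rightarrow(G_1,\ldots,G_d)$ in distribution. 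The $\beta^{\prime}$ case is entirely parallel.

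For step~(ii), I would substitute $y_i\mapsto y_i/\sqrt{2\beta}$ in the joint density from Remark~\ref{rem:rep_typical}(b): the weight transforms as $(1-\|y_i\|^2/(2\beta))^{\beta}{\bf 1}\{\|y_i\|^2\le 2\beta\}\to e^{-\|y_i\|^2/2}$ pointwise; the factor $\Delta_{d-1}(y_1,\ldots,y_d)^{\nu+1}$ contributes an overall $(2\beta)^{-(d-1)(\nu+1)/2}$; and the Lebesgue measure picks up a Jacobian $(2\beta)^{-d(d-1)/2}$. After matching the normalising constant using the moment identity~\eqref{eq:betamoments} and Stirling's formula, Scheff\'e's lemma yields convergence of the densities in total variation to precisely the Gaussian density given in the statement. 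The $\beta^{\prime}$ case uses the limit $(1+\|y_i\|^2/(2\beta))^{-\beta}\to e^{-\|y_i\|^2/2}$ together with~\eqref{eq:betaprimemoments} and yields the same Gaussian limit.

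For step~(i), Stirling's formula gives $m_{d,\beta}^{(\prime)}\sim\gamma/(2\sqrt{\pi\beta})$ as $\beta\to\infty$. Setting the derivative of the log-density $(2\kappa d\beta + d^2 + \nu(d-1))\log r-m_{d,\beta}^{(\prime)}r^{d+1+2\kappa\beta}$ to zero, one finds that its mode $r_\beta^{*}$ satisfies $(r_\beta^{*})^{d+1+2\kappa\beta}\sim 2d\sqrt{\pi\beta}/\gamma$, whence $r_\beta^{*}\to 1$. Since the second derivative at the mode is of order $-\beta^2$, a Laplace-type argument will give concentration of $R$ at $1$ with fluctuations of order $1/\beta$, in particular $R\to 1$ in probability. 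The main technical obstacle here is the bookkeeping of constants: one must carefully match the Stirling-asymptotics of $c_{d,\beta}^{(\prime)}/c_{d+1,\beta}^{(\prime)}$ with the polynomial prefactor $r^{2\kappa d\beta}$ to confirm that the mode tends to $1$ and not to some other constant which would spoil the $\sqrt{2\beta}$ scaling.

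For the expected angle sums with integer $\nu\ge-1$, Theorem~\ref{theo:angle_sum_cell} identifies $\EE\sigma_k(Z_{\beta,\nu})$ with $\mathbb{J}_{d,k}(\beta+(\nu+1)/2)$, the expected angle sum of an \textit{unweighted} beta simplex with parameter $\beta+(\nu+1)/2$. The weak convergence established above (specialised to $\nu=0$, with the beta parameter $\beta$ replaced by $\beta+(\nu+1)/2$), combined with the scale-invariance of $\sigma_k$ and the uniform bound $\sigma_k\le\binom{d}{k}$, will yield convergence of $\mathbb{J}_{d,k}(\beta+(\nu+1)/2)$ to the expected angle sum of an unweighted standard Gaussian simplex $\conv(G_1,\ldots,G_d)$ with $G_i$ iid $N(0,I_{d-1})$. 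It is a classical identity, obtainable for instance by taking $\alpha\to\infty$ in the integral formula~\eqref{eq:J_nk_integral} via Laplace's method, that the latter expected angle sum equals $\sigma_k(\Sigma_{d-1})$; this completes the proof, and the $\beta^{\prime}$ case is handled identically.
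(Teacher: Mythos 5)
Your plan is correct and follows essentially the same route as the paper: the Slutsky decomposition into the radial factor $R$ and the rescaled spatial tuple, pointwise convergence of the density plus Scheff\'e's lemma for $\sqrt{2\beta}(Y_1,\ldots,Y_d)$, uniform integrability of the bounded angle sums, the de-weighting identity from Remark~4.2 of the beta-polytopes paper, and the classical fact that the expected angle sums of a Gaussian simplex equal those of the regular simplex. The only deviations are immaterial: the paper proves $R\to 1$ by computing $\EE R$ and $\operatorname{var} R$ exactly from the representation $R\overset{D}{=}(m_{d,\beta}^{-1}Z)^{1/(d+1+2\beta)}$ with $Z$ Gamma-distributed and applying Chebyshev, rather than your (equally valid) Laplace/mode-concentration argument, and it de-weights in the Gaussian limit rather than at finite $\beta$ via Theorem~\ref{theo:angle_sum_cell}; note also that to deduce $\mathbb J_{d,k}(\beta+\frac{\nu+1}{2})\to\EE\sigma_k(\conv(N_1,\ldots,N_d))$ you should specialise the weak convergence to $\nu=-1$ (i.i.d.\ beta points), not $\nu=0$, though this slip does not affect the argument.
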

\begin{proof}
For concreteness, we consider the $\beta$-case. From Remark \ref{rem:rep_typical} and Equation~\eqref{eq:betamoments} we know that $Z_{\beta,\nu}$ has the same distribution as the random simplex $\conv(RY_1,\ldots,RY_d)$, where
\begin{itemize}
\item[(a)] $R$ is a random variable with density proportional to $r^{2d\beta+d^2+\nu(d-1)}e^{-m_{d,\beta}r^{d+1+2\beta}}$, $r\in(0,\infty)$,
\item[(b)] $Y_1,\ldots,Y_d$ are random vectors with joint density equal to
\begin{multline*}
f(y_1,\ldots,y_d) := \left(
{1\over (d-1)!^{\nu+1}c_{d-1,\beta}^d}{\Gamma({d+1\over 2}+\beta)^d\over \Gamma({d+\nu\over 2}+\beta+1)^d}{\Gamma({d(d+\nu+2\beta)\over 2} +1)\over \Gamma({d(d+\nu+2\beta)-\nu +1 \over 2})}
\prod\limits_{i=1}^{d-1} {\Gamma({i+\nu+1\over 2})\over \Gamma({i\over 2})}\right)^{-1}
\\
\times\Delta_{d-1}(y_1,\ldots,y_d)^{\nu+1}\prod_{i=1}^d(1-\|y_i\|^2)^\beta,\qquad y_1\in\BB^{d-1},\ldots,y_d\in\BB^{d-1},
\end{multline*}
\item[(c)] $R$ is independent from $(Y_1,\ldots,Y_d)$.
\end{itemize}
Let us first show that $R\to 1$ in probability, as $\beta\to\infty$. Define $\alpha:=d+{(\nu-1)(d-1)\over d+1+2\beta}$, let $Z\sim\Gamma(\alpha,1)$ and observe that $R$ and $(m_{d,\beta}^{-1}Z)^{1/(d+1+2\beta)}$ are identically distributed. We thus compute that
\begin{align*}
\EE[R] &= {1\over m_{d,\beta}^{1/(d+1+2\beta)}}\EE[Z^{1/(d+1+2\beta)}] \\
&=m_{d,\beta}^{-1/(d+1+2\beta)}{1\over\Gamma(\alpha)}\int_0^\infty z^{1/(d+1+2\beta)}z^{\alpha-1}e^{-z}\,\dint z\\
&= \Bigg({2\sqrt{\pi}\Gamma({d\over 2}+\beta+{3\over 2})\over\Gamma({d\over 2}+\beta+1)}\Bigg)^{1\over d+1+2\beta}{\Gamma(\alpha+{1\over d+1+2\beta})\over \Gamma(\alpha)}=1+O(\beta^{-1}\log\beta)
\end{align*}
and, similarly,
\begin{align*}
\VV[R]
&= \Bigg({2\sqrt{\pi}\Gamma({d\over 2}+\beta+{3\over 2})\over\Gamma({d\over 2}+\beta+1)}\Bigg)^{2\over d+1+2\beta}\Bigg({\Gamma(\alpha+{2\over d+1+2\beta})\over \Gamma(\alpha)}-{\Gamma(\alpha+{1\over d+1+2\beta})^2\over \Gamma(\alpha)^2}\Bigg)={\psi^{(1)}(d+1)\over 4\beta^2}+O(\beta^{-3})
\end{align*}
as $\beta\to\infty$, by a multiple application of the asymptotic expansion of the gamma function (here, $\psi^{(1)}$ stands for the first polygamma function, that is, the first derivative of $\ln\Gamma(x)$). As a consequence, using Chebyshev's inequality we have that, for any $\varepsilon>0$,
\begin{align*}
\PP(|R-\EE[R]|>\varepsilon) \leq {\VV[R]\over\varepsilon^2} \to 0,
\end{align*}
as $\beta\to\infty$. In other words, $R-\EE[R]$ converges to zero in probability, as $\beta\to\infty$. Since $\EE[R]\to 1$, we also have that $R$ converges in probability to the constant random variable $1$, as $\beta\to\infty$, by Slutsky's theorem.

Next, we claim that  $\sqrt{2\beta}(Y_1,\ldots,Y_d)$ converges in distribution, as $\beta\to\infty$, to a $d$-tuple $(G_1,\ldots,G_d)$ of random vectors in $\RR^{d-1}$ with certain joint density which we will compute.
Indeed, the density of $\sqrt{2\beta}(Y_1,\ldots,Y_d)$ is given by
$$
\left(\frac 1 {\sqrt{2\beta}}\right)^{d(d-1)}f\left(\frac{y_1}{\sqrt{2\beta}},\ldots,\frac{y_d}{\sqrt{2\beta}}\right)
$$
We now let $\beta\to\infty$. Using that $(1-\|y\|^2/(2\beta))^\beta\to e^{-\|y\|^2/2}$ and the standard asymptotics $\Gamma(\beta + c_1)/\Gamma(\beta+c_2) \sim \beta^{c_1-c_2}$, we obtain that the above density converges pointwise to
$$
\frac{(d-1)!^{\nu+1}}{ d^{\frac{\nu+1}2} 2^{(\nu+1)(d-1)/2}}
\left(\prod\limits_{i=1}^{d-1} {\Gamma({i\over 2})\over \Gamma({i+\nu+1\over 2})}\right)
\Delta_{d-1}(y_1,\ldots,y_d)^{\nu+1}
\left(\frac {1}{\sqrt{2\pi}}\right)^{d(d-1)} \prod_{i=1}^d  e^{-\|y_i\|^2/2}.
$$
By Scheff\'e's lemma, the tuple $\sqrt{2\beta}(Y_1,\ldots,Y_d)$ converges weakly to the tuple $(G_1,\ldots,G_d)$ with the above joint density. A related result without the volume-power weighting can be found in Lemma 1.1 in \cite{beta_polytopes}.

Using now Slutsky's theorem again together with the continuous mapping theorem we conclude that, as $\beta\to\infty$, the random simplex
$$
\sqrt{2\beta}Z_{\beta,\nu}=\sqrt{2\beta}\conv(RY_1,\ldots,RY_d)
$$
converges in distribution (on the space of convex bodies in $\RR^{d-1}$ supplied with the Hausdorff distance) to a weighted Gaussian simplex $\conv(G_1,\ldots,G_d)$; the continuity of the involved map is guaranteed by \cite[Theorem 12.3.5]{SW}. Using once again the continuous mapping theorem, this implies that, as $\beta\to\infty$,
$$
\sigma_k(\sqrt{2\beta}Z_{\beta,\nu}) \overset{d}{\longrightarrow}\sigma_k(\conv(G_1,\ldots,G_d))
$$
for all $k\in\{1,\ldots,d\}$, since angle sums are invariant under rescaling. As they are also bounded, the sequence of random variables $\sigma_k(\sqrt{2\beta}Z_{\beta,\nu})$, $\beta>-1$, is uniformly integrable and we have that
$$
\lim_{\beta\to\infty}\EE[\sigma_k(\sqrt{2\beta}Z_{\beta,\nu})] = \EE[\sigma_k(\conv(G_1,\ldots,G_d))].
$$
However, by taking the limit $\beta\to\infty$ in \cite[Remark 4.2]{beta_polytopes} we have that the expected angle sum of the weighted Gaussian simplex $\conv(G_1,\ldots,G_d)$ coincides with the expected angle sum of a standard (unweighted) Gaussian simplex $\conv(N_1,\ldots,N_d)$, where $N_1,\ldots,N_d$ are are i.i.d.\ standard Gaussian random vectors in $\RR^{d-1}$. Finally, let $\Sigma_{d-1}$ be a regular simplex in $\RR^{d-1}$ and recall from \cite{GoetzeKabluchkoZap,GaussianSimplexAngles} that the expected angle sum $\EE[\sigma_k(\conv(N_1,\ldots,N_d))]$ coincides with $\sigma_k(\Sigma_{d-1})$. We have thus shown that
\begin{align*}
\lim_{\beta\to\infty}\EE\sigma_k(Z_{\beta,\nu}) &=\lim_{\beta\to\infty}\EE[\sigma_k(\sqrt{2\beta}Z_{\beta,\nu})] \\
&= \EE[\sigma_k(\conv(G_1,\ldots,G_d))]\\
&= \EE[\sigma_k(\conv(N_1,\ldots,N_d))]\\
&= \sigma_k(\Sigma_{d-1}),
\end{align*}
and the proof is complete.
\end{proof}

\subsection{Face intensities in $\beta$-tessellations}\label{subsec:intensities}
Given a stationary tessellation $\cT$ on $\RR^{d-1}$, one can introduce the notion of face intensities for faces of all dimensions $j\in\{0,\ldots, d-1\}$ as follows~\cite[p.~450 and \S~4.1]{SW}. Fix some center function $z:\cC' \to \RR^{d-1}$.  Let $\cF_{j}(\cT)$ be the set of all $j$-dimensional faces of the cells of the tessellation $\cT$. By convention, each face is counted once even if it is a face of two or more cells. Consider the point process
$$
\pi_{j}(\cT) := \sum_{F\in \cF_j(\cT)} \delta_{z(F)}
$$
on $\RR^{d-1}$ and note that it is stationary because the center function is required to be translation invariant. The \textbf{intensity} of $j$-dimensional cells of $\cT$ is just the intensity of this point process, that is
$$
\gamma_j(\cT) := \EE \sum_{F\in \cF_j(\cT)} {\bf 1}_{[0,1]^{d-1}}(z(F)).
$$
In the next theorem we compute the cell intensities in the $\beta$- and $\beta^\prime$-Delaunay tessellations $\cD_{\beta}$ and $\cD_\beta^\prime$ on $\RR^{d-1}$.

\begin{theorem}\label{theo:cell_intensities}
For all $j\in\{0,\ldots, d-1\}$ and $\beta\geq -1$ (in the $\beta$-case) or $\beta>(d+1)/2$ (in the $\beta^\prime$-case), we have
$$
\gamma_j(\cD_{\beta}) = \frac {\mathbb J_{d,j+1}\left(\beta + \frac {1} 2\right)} {\EE \Vol (Z_{\beta,0})},
\qquad
\gamma_j(\cD_{\beta}^\prime) = \frac {\mathbb J_{d,j+1}^\prime\left(\beta - \frac {1} 2\right)} {\EE \Vol (Z_{\beta,0}^\prime)},
$$
where $\EE \Vol (Z_{\beta,0}^{(\prime)})$ is as in Theorem~\ref{theo:volume} and
\begin{align*}
\mathbb J_{d,j+1}\left(\beta + \frac {1} 2\right)
&=
\binom d{j+1} \int_{-\infty}^{+\infty} c_{\frac{(2\beta + d) d}2} (\cosh u)^{- (2\beta + d) d - 2}\\
&\hspace{3cm}\times\left(\frac 12  + \ii \int_0^u  c_{\frac{2\beta + d - 1}{2}} (\cosh v)^{2\beta + d}\dd v \right)^{d-j-1} \dd u,\\
\mathbb J_{d,j+1}^\prime\left(\beta - \frac {1} 2\right)
&=
\binom d{j+1} \int_{-\infty}^{+\infty} c_{\frac{(2\beta-d) d}2}^\prime (\cosh u)^{-(2\beta-d) d + 1}\\
&\hspace{3cm}\times
\left(\frac 12  + \ii \int_0^u  c_{\frac{2\beta - d - 1}{2}}^\prime (\cosh v)^{2\beta-d-1}\dd v \right)^{d-j-1} \dd u.
\end{align*}
\end{theorem}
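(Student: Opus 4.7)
The plan is to reduce the computation of $\gamma_j(\cD_\beta)$ and $\gamma_j(\cD_\beta^\prime)$ to two ingredients that have already been established, namely the expected angle sums of the typical cell (Theorem~\ref{theo:angle_sum_cell}) and the mean volume of the typical cell (Theorem~\ref{theo:volume} applied with $\nu=s=0$). The bridge between these ingredients and the face intensity is the standard identity
$$
\gamma_j(\cT) \;=\; \gamma_{d-1}(\cT)\cdot\EE\,\sigma_{j+1}(Z_0)
$$
valid for any stationary simplicial face-to-face tessellation $\cT$ of $\RR^{d-1}$ with typical cell $Z_0$; see Theorem~10.3.3 and the discussion around equations (10.4) and (10.7) in~\cite{SW}. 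Combining this with the general relation $\gamma_{d-1}(\cT)=1/\EE\Vol(Z_0)$ gives
$$
\gamma_j(\cT) \;=\; \frac{\EE\,\sigma_{j+1}(Z_0)}{\EE\Vol(Z_0)}.
$$

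First I would verify the applicability of this identity in our setting: by Lemma~\ref{lem:voronoi_normal} and the subsequent construction in Section~\ref{sec:Construction}, both $\cD_\beta$ and $\cD_\beta^\prime$ are stationary, face-to-face, normal tessellations of $\RR^{d-1}$ with simplicial cells almost surely, so the cited result applies verbatim. I would then specialise $\cT$ to $\cD_\beta$ and $\cD_\beta^\prime$, identifying the typical cells with $Z_{\beta,0}$ and $Z_{\beta,0}^\prime$ respectively (i.e.\ the $\nu=0$ case of the weighted typical cells introduced in Section~\ref{sec:TypicalCells}).

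For the numerator, I would plug $\nu=0$ into Theorem~\ref{theo:angle_sum_cell}, yielding
$$
\EE\,\sigma_{j+1}(Z_{\beta,0}) \;=\; \mathbb J_{d,j+1}\!\left(\beta+\tfrac12\right),\qquad \EE\,\sigma_{j+1}(Z_{\beta,0}^\prime)\;=\;\mathbb J_{d,j+1}^\prime\!\left(\beta-\tfrac12\right),
$$
and the explicit integral expressions stated in the theorem then follow directly by substituting $\alpha = 2\beta+d$ (respectively $\alpha'=2\beta-d$) into formulas~\eqref{eq:J_nk_integral} and~\eqref{eq:J_nk_integral_prime}. One must check that these values of $\alpha$, $\alpha'$ satisfy the admissibility conditions of those formulas: the $\beta$-case needs $\alpha\ge d-3$, which follows from $\beta\ge -1$ and $d\geq 2$; the $\beta^\prime$-case needs $\alpha'>0$ and $\alpha' d>1$, which follow from $\beta>(d+1)/2$. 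For the denominator, no further work is required: $\EE\Vol(Z_{\beta,0})$ and $\EE\Vol(Z_{\beta,0}^\prime)$ are given by Theorem~\ref{theo:volume} with $\nu=0$ and $s=0$ replaced by the appropriate parameter (note that the $s=0$ case reduces the moment formula to a finite constant).

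The only genuinely delicate point is the boundary case $\beta=-1$ in the $\beta$-model, where the angle-sum formula~\eqref{eq:J_nk_integral} was derived only for $\alpha\ge d-3$, and the typical cell $Z_{-1,0}$ must be interpreted via the limit discussed in Remark~\ref{rem:rep_typical_beta_-1}. Here I would argue by passing to the limit $\beta\to -1$ in both the angle sum (continuity in $\beta$ of the integrands in~\eqref{eq:J_nk_integral}, together with dominated convergence) and the mean volume (continuity of the explicit formula of Theorem~\ref{theo:volume}), using the weak convergence of $Z_{\beta,0}$ to the Poisson-Delaunay typical cell together with uniform integrability of $\Vol(Z_{\beta,0})$ and the boundedness of angle sums. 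Everything else is direct substitution.
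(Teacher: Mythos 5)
Your proposal is correct and follows essentially the same route as the paper: the identity $\gamma_j(\cT)=\gamma_{d-1}(\cT)\,\EE\sigma_{j+1}(Z_0)$ from \cite{SW} (the paper cites Theorem~10.1.3 there), the relation $\gamma_{d-1}=1/\EE\Vol(Z_{\beta,0})$ from Equation~(10.4) of \cite{SW}, and the $\nu=0$ specialisation of Theorem~\ref{theo:angle_sum_cell}. Your additional checks of the admissibility conditions for $\alpha=2\beta+d$ and $\alpha'=2\beta-d$ and your remarks on the boundary case $\beta=-1$ are sound but go slightly beyond what the paper spells out.
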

\begin{proof}
For concreteness, we consider the beta case. According to Theorem~10.1.3 of~\cite{SW}, the cell intensities  of $\cD_{\beta}$  satisfy
\begin{equation*}
\gamma_j(\cD_{\beta}) = \gamma_{d-1} (\cD_{\beta}) \cdot \EE \sigma_{j+1}(Z_{\beta,0}), \qquad j\in \{0,\ldots,d-1\},
\end{equation*}
where $Z_{\beta,0}$ is the typical cell of the tessellation $\cD_{\beta}$ (that is, a random simplex distributed according to $\PP_{\beta,0}$). The intensity of the cells of maximal dimension $d-1$ is known to satisfy
$$
\gamma_{d-1}(\cD_{\beta}) = \frac 1 {\EE \Vol (Z_{\beta,0})},
$$
see \cite[Equation (10.4)]{SW}.
On the other hand, by Theorem~\ref{theo:angle_sum_cell},
\begin{align*}
\EE \sigma_{j+1}(Z_{\beta,0})
&=
\mathbb J_{d,j+1}\left(\beta + \frac {1} 2\right),
\end{align*}
and the same theorem yields also an explicit expression for $\mathbb J_{d,j+1}(\beta + \frac {1} 2)$. Taking these three equations together completes the proof in the $\beta$-case. The $\beta^\prime$-case is similar.
\end{proof}

Using duality, we can also compute the face intensities of the $\beta$- and $\beta^\prime$-Voronoi tessellations.
\begin{proposition}\label{prop:duality_face_intensities}
The face intensities of $\cD_{\beta}^{(\prime)}$ and $\cV_{\beta}^{(\prime)}$ are related via
$$
\gamma_{k-1}(\cD_{\beta}^{(\prime)}) = \gamma_{d-k}(\cV_{\beta}^{(\prime)}),
\qquad
k\in \{1,\ldots,d\}.
$$
\end{proposition}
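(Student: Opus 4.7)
The plan is to exhibit an explicit bijection between $(k-1)$-faces of $\cD_\beta^{(\prime)}$ and $(d-k)$-faces of $\cV_\beta^{(\prime)}$ and then transport a centre function across it, so that the two intensity expressions become termwise identical.

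First, I would recall the dual Laguerre construction of Section~\ref{sec:Laguerre_tess}: every vertex $z$ of $\cV_\beta^{(\prime)}$ generates a Delaunay simplex $D(z,\eta_\beta^{(\prime)}) = \conv(v_{i_1},\ldots,v_{i_d})$, where $x_{i_j}=(v_{i_j},h_{i_j})\in \eta_\beta^{(\prime)}$ are exactly the $d$ points whose Laguerre cells contain $z$. In particular any $(k-1)$-face $F$ of $\cD_\beta^{(\prime)}$ has the form $F=\conv(v_{i_1},\ldots,v_{i_k})$ for some $k$ points of $\eta_\beta^{(\prime)}$. Define
$$
\Phi(F) := \bigcap_{j=1}^k C(x_{i_j},\eta_\beta^{(\prime)}).
$$
By the normality of $\cV_\beta^{(\prime)}$ (Lemma~\ref{lem:voronoi_normal}) this intersection has dimension exactly $d-k$ and is a $(d-k)$-face of $\cV_\beta^{(\prime)}$; conversely, every $(d-k)$-face of $\cV_\beta^{(\prime)}$ is contained in precisely $k$ Laguerre cells, and the spatial coordinates of the corresponding $k$ Poisson points form, by the dual Laguerre construction applied at any vertex of that face, a $(k-1)$-face of $\cD_\beta^{(\prime)}$. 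Hence $\Phi\colon \cF_{k-1}(\cD_\beta^{(\prime)})\to \cF_{d-k}(\cV_\beta^{(\prime)})$ is a bijection almost surely.

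Second, $\Phi$ is translation-equivariant: replacing $\eta_\beta^{(\prime)}$ by its spatial shift by $v\in\RR^{d-1}$ simultaneously shifts $F$ and $\Phi(F)$ by $v$. Fix any translation-invariant centre function $z_{\cD}$ for $(k-1)$-faces of $\cD_\beta^{(\prime)}$ and define, for a $(d-k)$-face $F^\ast$ of $\cV_\beta^{(\prime)}$,
$$
z_{\cV}(F^\ast) := z_{\cD}(\Phi^{-1}(F^\ast)).
$$
Translation invariance of $z_{\cV}$ follows from that of $z_{\cD}$ and of $\Phi$, and measurability is immediate from the explicit formula for $\Phi$. Since the definition of face intensities is independent of the choice of centre function, we may use $z_\cD$ for $\cD_\beta^{(\prime)}$ and $z_\cV$ for $\cV_\beta^{(\prime)}$ to get
\begin{align*}
\gamma_{k-1}(\cD_\beta^{(\prime)})
&= \EE\sum_{F\in\cF_{k-1}(\cD_\beta^{(\prime)})} {\bf 1}_{[0,1]^{d-1}}(z_{\cD}(F))\\
&= \EE\sum_{F^\ast\in\cF_{d-k}(\cV_\beta^{(\prime)})} {\bf 1}_{[0,1]^{d-1}}(z_{\cV}(F^\ast))
= \gamma_{d-k}(\cV_\beta^{(\prime)}),
\end{align*}
the middle equality being a termwise reindexing along $\Phi$.

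The only point needing genuine care is the verification that $\Phi$ is indeed a bijection on the correct face classes; in particular that every $(d-k)$-face of $\cV_\beta^{(\prime)}$ arises in this way from a $(k-1)$-face of $\cD_\beta^{(\prime)}$. This, however, is precisely what normality of $\cV_\beta^{(\prime)}$ (Lemma~\ref{lem:voronoi_normal}) together with the Schlottmann duality~\cite[Proposition~2]{Sch93} between Laguerre and dual Laguerre diagrams provides, so the proposition reduces to this bookkeeping.
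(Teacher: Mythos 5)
Your proposal is correct and follows essentially the same route as the paper, which simply invokes the duality between $\cD_{\beta}^{(\prime)}$ and $\cV_{\beta}^{(\prime)}$ to match each $(k-1)$-face of the former with a $(d-k)$-face of the latter and concludes immediately. You merely make explicit the bijection $F\mapsto\bigcap_j C(x_{i_j},\eta_\beta^{(\prime)})$, its translation equivariance, and the transport of the centre function, all of which the paper leaves implicit in its one-line appeal to duality and \cite{Sch93}.
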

\begin{proof}
Since the $\beta^{(')}$-Voronoi tessellation $\cV_{\beta}^{(\prime)}$ is dual to the $\beta^{(\prime)}$-Delaunay tessellation $\cD_{\beta}^{(\prime)}$, each $(k-1)$-dimensional face of $\cD_\beta^{(\prime)}$ corresponds to a $(d-k)$-dimensional face of $\cV_\beta^{(')}$, and the claim follows.
\end{proof}

\subsection{Expected face numbers of the typical $\beta$-Voronoi cell}
In the next theorem we compute the expected \textbf{$f$-vector} of the typical cell of the $\beta$- and $\beta^\prime$-Voronoi tessellations $\cV_{\beta}$ and $\cV_\beta^\prime$.

\begin{theorem}\label{theo:typical_beta_poi_vor_f_vect}
Let $Y_{\beta}^{(\prime)}$ be the typical cell of the $\beta^{(\prime)}$-Voronoi tessellation $\cV_{\beta}^{(\prime)}$ in $\RR^{d-1}$, where, as usual, $\beta \geq  -1$ in the $\beta$-case and $\beta>(d+1)/2$ in the $\beta^\prime$-case.  Then, for all $k\in \{1,\ldots,d\}$,
$$
\EE f_{d-k}(Y_{\beta}) =  k \gamma_{k-1} (\cD_\beta) =  \frac {k \mathbb J_{d,k}\left(\beta + \frac {1} 2\right)} {\EE \Vol (Z_{\beta,0})},
\qquad
\EE f_{d-k}(Y_{\beta}^\prime) =  k \gamma_{k-1} (\cD_\beta^\prime) =  \frac {k \mathbb J_{d,k}^\prime\left(\beta - \frac {1} 2\right)} {\EE \Vol (Z_{\beta,0}^\prime)}.
$$
\end{theorem}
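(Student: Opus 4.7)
The plan is to combine the Voronoi–Delaunay duality with a standard incidence-counting (Palm / mass-transport) identity on the stationary normal tessellation $\cV_\beta^{(\prime)}$, and then to invoke Theorem~\ref{theo:cell_intensities} for the explicit integral expression.

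First, I would set up the duality precisely. By Proposition~\ref{prop:duality_face_intensities} (and the discussion preceding it), each $(d-k)$-dimensional face $F$ of $\cV_\beta^{(\prime)}$ corresponds to a unique $(k-1)$-dimensional simplex $\sigma(F)\in\cD_\beta^{(\prime)}$; moreover, $F$ is a face of exactly $k$ Voronoi cells, namely the cells whose nuclei are the $k$ vertices of $\sigma(F)$. In particular, the $(d-k)$-dimensional faces of the typical cell $Y_\beta^{(\prime)}$ are in bijection with the $(k-1)$-simplices of $\cD_\beta^{(\prime)}$ that are incident to the Delaunay vertex which is the nucleus of $Y_\beta^{(\prime)}$.

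Second, I would apply a two-way counting argument. Using the stationarity and the normality of $\cV_\beta^{(\prime)}$ (Lemma~\ref{lem:voronoi_normal}), the fact that each $(d-k)$-face is contained in exactly $k$ cells, combined with a Palm computation equivalent to the refined Euler-type relation of \cite[Theorem 10.1.3]{SW} applied to the normal tessellation $\cV_\beta^{(\prime)}$, yields
\[
\EE f_{d-k}\bigl(Y_\beta^{(\prime)}\bigr) \;=\; k\,\gamma_{k-1}\bigl(\cD_\beta^{(\prime)}\bigr),
\]
where the duality identity $\gamma_{d-k}(\cV_\beta^{(\prime)})=\gamma_{k-1}(\cD_\beta^{(\prime)})$ from Proposition~\ref{prop:duality_face_intensities} is used to rewrite the Voronoi face intensity in terms of the Delaunay one. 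Substituting the explicit formula for $\gamma_{k-1}(\cD_\beta^{(\prime)})$ from Theorem~\ref{theo:cell_intensities} (which was itself obtained by combining the expected angle sums of Theorem~\ref{theo:angle_sum_cell} with $\gamma_{d-1}(\cD_\beta^{(\prime)})=1/\EE\Vol(Z_{\beta,0}^{(\prime)})$) then gives the desired integral representations in terms of $\mathbb J_{d,k}\bigl(\beta+\tfrac12\bigr)$ and $\mathbb J_{d,k}^\prime\bigl(\beta-\tfrac12\bigr)$.

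The only mild subtlety is the Palm bookkeeping in the Laguerre setting: the non-empty cells of $\cV_\beta^{(\prime)}$ are indexed by the apex point process $\xi^*$ from~\eqref{eq:ApexProcess} rather than by the full underlying Poisson process $\eta_\beta^{(\prime)}$, so one should identify the intensity of cells of $\cV_\beta^{(\prime)}$ with the intensity of the Delaunay vertex process and argue via the multivariate Mecke formula applied to $\xi^*$. Once this identification is made, the whole argument reduces to the standard two-way counting of (cell, face) incidences on a stationary normal tessellation, and no further computation is needed beyond invoking Theorem~\ref{theo:cell_intensities}.
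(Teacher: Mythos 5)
Your argument is essentially identical to the paper's proof: you invoke normality of $\cV_{\beta}^{(\prime)}$ (Lemmas~\ref{lem:voronoi_normal} and~\ref{lem:properties_satisfied}), the Schneider--Weil incidence relation between $\EE f_{d-k}(Y_{\beta}^{(\prime)})$ and $\gamma_{d-k}(\cV_{\beta}^{(\prime)})$ for a stationary normal tessellation, the duality identity $\gamma_{d-k}(\cV_{\beta}^{(\prime)})=\gamma_{k-1}(\cD_{\beta}^{(\prime)})$ of Proposition~\ref{prop:duality_face_intensities}, and then Theorem~\ref{theo:cell_intensities} for the explicit expression. The only differences are cosmetic (the paper cites \cite[Theorem~10.1.2]{SW} rather than 10.1.3 for the incidence step, and your closing remark on the Palm bookkeeping via the apex process $\xi^*$ is an elaboration the paper does not spell out), so this matches the intended proof.
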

\begin{proof}
Let us consider the $\beta$-case.
Note that the $\beta$-Voronoi tessellation $\cV_\beta$ is normal by Theorem~10.2.3 of~\cite{SW} (for $\beta=-1$) or by Lemmas~\ref{lem:voronoi_normal} and~\ref{lem:properties_satisfied} (for $\beta>-1$). Hence, Theorem~10.1.2 of~\cite{SW} implies that
$$
\gamma_{d-k} (\cV_{\beta}) = \frac1 {k} \EE f_{d-k}(Y_{\beta}).
$$
By Proposition~\ref{prop:duality_face_intensities}, we also have $\gamma_{k-1}(\cD_{\beta}) = \gamma_{d-k}(\cV_{\beta})$.
Taking these equalities together and recalling Theorem~\ref{theo:cell_intensities} yields the claim in the $\beta$-case. The $\beta^\prime$-case is similar.
\end{proof}

\begin{remark}
For $\beta=-1$, $Y_{-1}$ is the typical cell in the classical Poisson-Voronoi tessellation in $\RR^{d-1}$. The expected $f$-vector of $Y_{-1}$ has been determined in~\cite[Theorem~2.8]{kabluchko_formula}, where $Y_{-1}$ was denoted by $\cV_{d-1}$.  In~\cite{beta_polytopes}, we showed that the expected $f$-vector of $Y_{-1}$ is related to the angle sums of  $\beta^\prime$-simplices, whereas the above theorem expresses it in terms of the values $\mathbb J_{d,k}( - \frac {1} 2)$ originating from $\beta$-simplices.  For the typical Voronoi cell on the sphere, there also exist similar representations in terms of, both, $\beta$- and $\beta^\prime$-simplices~\cite{kabluchko_thaele_voronoi_sphere}.
\end{remark}

\subsection*{Acknowledgement}
{We thank the referees for careful reading and their comments which helped to significantly improve the paper. In particular, we are grateful to a referee who suggested an alternative proof of Lemma \ref{lem:voronoi_normal} which is more elegant than the original one. We would like to thank Claudia Redenbach (Kaiserslautern) for pointing us to the \textit{CGAL Project} in order to create the simulations shown in Figure \ref{fig:beta-tessellations} and Figure \ref{fig:betaprime-tessellations}. AG was partially supported by the the Deutsche Forschungsgemeinschaft (DFG) via RTG 2131 \textit{High-dimensional Phenomena in Probability -- Fluctuations and Discontinuity}.
ZK and CT were supported by the DFG priority program SPP 2265 \textit{Random Geometric Systems}.}

\bibliographystyle{acm}

\begin{thebibliography}{10}

\bibitem{AurenhammerKlein}
{\sc Aurenhammer, F., Klein, R., and Lee, D.-T.}
\newblock {\em Voronoi {D}iagrams and {D}elaunay {T}riangulations}.
\newblock World Scientific, London, 2013.

\bibitem{BG07}
{\sc Baumstark, V., and Last, G.}
\newblock {Some distributional results for Poisson-Voronoi tessellations}.
\newblock {\em Advances in Applied Probability 39}, 1 (2007), 16–40.

\bibitem{BlaszEtAl}
{\sc Blaszczyszyn, B., Haenggi, M., Keeler, P., and Mukherjee, S.}
\newblock {\em Stochastic {G}eometry {A}nalysis of {C}ellular {N}etworks}.
\newblock Cambridge University Press, Cambridge, 2018.

\bibitem{BY98}
{\sc Boissonnat, J.-D., and Yvinec, M.}
\newblock {\em Algorithmic geometry}.
\newblock Cambridge University Press, Cambridge, 1998.
\newblock Translated from the 1995 French original by Herv\'{e} Br\"{o}nnimann.

\bibitem{CSY13}
{\sc Calka, P., Schreiber, T., and Yukich, J.~E.}
\newblock Brownian limits, local limits and variance asymptotics for convex
  hulls in the ball.
\newblock {\em Ann. Probab. 41}, 1 (2013), 50--108.

\bibitem{CYGaussian}
{\sc Calka, P., and Yukich, J.~E.}
\newblock Variance asymptotics and scaling limits for {G}aussian polytopes.
\newblock {\em Probab. Theory Related Fields 163}, 1-2 (2015), 259--301.

\bibitem{ChegDeyEtAl}
{\sc Cheng, S.-W., Dey, T., and Shewchuk, J.~R.}
\newblock {\em Delaunay {M}esh {G}eneration: {A}lgorithms and {M}athematical
  {A}nalysis}.
\newblock CRD Computer and Information Science. CRC Press, Boca Raton, 2013.

\bibitem{Edelsbrunner}
{\sc Edelsbrunner, H.}
\newblock {\em A {S}hort {C}ourse in {C}omputational {G}eometry and
  {T}opology}.
\newblock SpringerBriefs in Applied Sciences and Technology. Springer, Cham,
  2014.

\bibitem{GoetzeKabluchkoZap}
{\sc G\"{o}tze, F., Kabluchko, Z., and Zaporozhets, D.}
\newblock Grassmann angles and absorption probabilities of {G}aussian convex
  hulls.
\newblock {\em Zap. Nauchn. Sem. S.-Peterburg. Otdel. Mat. Inst. Steklov.
  (POMI) 501}, Geometriya i Topologiya. 13 (2021), 126--148.

\bibitem{GKT17}
{\sc Grote, J., Kabluchko, Z., and Th\"{a}le, C.}
\newblock Limit theorems for random simplices in high dimensions.
\newblock {\em ALEA Lat. Am. J. Probab. Math. Stat. 16}, 1 (2019), 141--177.

\bibitem{GusakovaThaele}
{\sc Gusakova, A., and Th\"{a}le, C.}
\newblock The volume of simplices in high-dimensional {P}oisson-{D}elaunay
  tessellations.
\newblock {\em Ann. H. Lebesgue 4\/} (2021), 121--153.

\bibitem{Haenggi}
{\sc Haenggi, M.}
\newblock {\em Stochastic {G}eometry for {W}ireless {N}etworks}.
\newblock Cambridge University Press, Cambridge, 2012.

\bibitem{HugSchneiderDelaunay}
{\sc Hug, D., and Schneider, R.}
\newblock Large cells in {P}oisson-{D}elaunay tessellations.
\newblock {\em Discrete Comput. Geom. 31}, 4 (2004), 503--514.

\bibitem{kabluchko_formula}
{\sc Kabluchko, Z.}
\newblock Angles of random simplices and face numbers of random polytopes.
\newblock {\em Adv. Math. 380\/} (2021), Paper No. 107612, 68.

\bibitem{KTT}
{\sc Kabluchko, Z., Temesvari, D., and Th\"{a}le, C.}
\newblock Expected intrinsic volumes and facet numbers of random
  beta-polytopes.
\newblock {\em Math. Nachr. 292}, 1 (2019), 79--105.

\bibitem{kabluchko_thaele_voronoi_sphere}
{\sc Kabluchko, Z., and Th\"{a}le, C.}
\newblock The typical cell of a {V}oronoi tessellation on the sphere.
\newblock {\em Discrete Comput. Geom. 66\/} (2021), 1330--1350.

\bibitem{beta_polytopes}
{\sc Kabluchko, Z., Th\"{a}le, C., and Zaporozhets, D.}
\newblock Beta polytopes and {P}oisson polyhedra: {$f$}-vectors and angles.
\newblock {\em Adv. Math. 374\/} (2020), Paper No. 107333, 63.

\bibitem{GaussianSimplexAngles}
{\sc Kabluchko, Z., and Zaporozhets, D.}
\newblock Angles of the {G}aussian simplex.
\newblock {\em Zap. Nauchn. Sem. S.-Peterburg. Otdel. Mat. Inst. Steklov.
  (POMI) 476}, Geometriya i Topologiya. 13 (2018), 79--91.

\bibitem{Kallenberg2}
{\sc Kallenberg, O.}
\newblock {\em Random Measures, Theory and Applications}.
\newblock Probability Theory and Stochastic Modelling. Springer International
  Publishing AG, Switzerland, 2017.

\bibitem{LP}
{\sc Last, G., and Penrose, M.}
\newblock {\em Lectures on the {P}oisson {P}rocess}, vol.~7 of {\em Institute
  of Mathematical Statistics Textbooks}.
\newblock Cambridge University Press, Cambridge, 2018.

\bibitem{Ldoc}
{\sc Lautensack, C.}
\newblock {\em Random Laguerre Tessellations}.
\newblock PhD thesis, 2007.

\bibitem{LZ08}
{\sc Lautensack, C., and Zuyev, S.}
\newblock Random {L}aguerre tessellations.
\newblock {\em Adv. in Appl. Probab. 40}, 3 (2008), 630--650.

\bibitem{LoBook}
{\sc Lo, D. S.~H.}
\newblock {\em Finite {E}lement {M}esh {G}eneration}.
\newblock Taylor and Francis, Boca Raton, 2017.

\bibitem{MM95}
{\sc Mecke, J., and Muche, L.}
\newblock {The Poisson Voronoi Tessellation I. A Basic Identity}.
\newblock {\em Mathematische Nachrichten 176}, 1 (1995), 199--208.

\bibitem{Mo89}
{\sc M{\o}ller, J.}
\newblock Random tessellations in $\mathbb{R}^ d$.
\newblock {\em Advances in Applied Probability 21}, 1 (1989), 37–73.

\bibitem{Mo94}
{\sc M{\o}ller, J.}
\newblock {\em Lectures on {R}andom {V}orono\u{\i} {T}essellations}, vol.~87 of
  {\em Lecture Notes in Statistics}.
\newblock Springer-Verlag, New York, 1994.

\bibitem{OkabeEtAl}
{\sc Okabe, A., Boots, B., Sugihara, K., and Chiu, S.~N.}
\newblock {\em Spatial {T}essellations: {C}oncepts and {A}pplications of
  {V}oronoi {D}iagrams}, second~ed.
\newblock Wiley Series in Probability and Statistics. John Wiley \& Sons, Ltd.,
  Chichester, 2000.
\newblock With a foreword by D. G. Kendall.

\bibitem{PreparataShamos}
{\sc Preparata, F.~P., and Shamos, M.}
\newblock {\em Computational {G}eometry}.
\newblock Monographs in Computer Science. Springer-Verlag, New York, 1985.

\bibitem{Sch93}
{\sc {Schlottmann}, M.}
\newblock {Periodic and quasi-periodic Laguerre tilings}.
\newblock {\em International Journal of Modern Physics B 7}, 6-07 (Jan. 1993),
  1351--1363.

\bibitem{SWGerman}
{\sc Schneider, R., and Weil, W.}
\newblock {\em Stochastische {G}eometrie}.
\newblock Teubner Skripten zur Mathematischen Stochastik. [Teubner Texts on
  Mathematical Stochastics]. B. G. Teubner, Stuttgart, 2000.

\bibitem{SW}
{\sc Schneider, R., and Weil, W.}
\newblock {\em Stochastic and {I}ntegral {G}eometry}.
\newblock Probability and its Applications (New York). Springer-Verlag, Berlin,
  2008.

\bibitem{SY08}
{\sc Schreiber, T., and Yukich, J.~E.}
\newblock Variance asymptotics and central limit theorems for generalized
  growth processes with applications to convex hulls and maximal points.
\newblock {\em Ann. Probab. 36\/} (2008), 363--396.

\bibitem{SKM}
{\sc Stoyan, D., Kendall, W.~S., and Mecke, J.}
\newblock {\em Stochastic {G}eometry and its {A}pplications}.
\newblock Wiley Series in Probability and Mathematical Statistics: Applied
  Probability and Statistics. John Wiley \& Sons, Ltd., Chichester, 1987.
\newblock With a foreword by D. G. Kendall.

\bibitem{CGAL}
{\sc {The CGAL Project}}.
\newblock {\em {CGAL} User and Reference Manual}, 5.0.2~ed.
\newblock {CGAL Editorial Board}, 2020.

\end{thebibliography}

\end{document}